\definecolor{verylight}{gray}{0.97}
\definecolor{light}{gray}{0.9}
\definecolor{medium}{gray}{0.85}
\definecolor{dark}{gray}{0.6}
\newcommand\calP{\mathcal{P}}
\def\NZQ{\mathbb}               
\def\ZZ{{\NZQ Z}}
\def\KK{{\NZQ K}}
\def\KK{{\NZQ K}}
\def\G{{\mathcal G}}
\def\C{{\mathcal C}}
\def\ab{{\mathbf a}}
\def\bb{{\mathbf b}}
\def\xb{{\mathbf x}}
\def\0b{{\mathbf 0}}
\def\reg{{\mathbf reg}}
\def\height{\operatorname{ht}}
\def\depth{\operatorname{depth}}
\def\opn#1#2{\def#1{\operatorname{#2}}} 
\opn\chara{char} \opn\length{\ell} \opn\pd{pd} \opn\rk{rk}
\opn\projdim{proj\,dim} \opn\injdim{inj\,dim} \opn\rank{rank}
\opn\depth{depth} \opn\grade{grade} \opn\height{height}
\opn\embdim{emb\,dim} \opn\codim{codim}
\opn\Tr{Tr} \opn\bigrank{big\,rank}
\opn\superheight{superheight}\opn\lcm{lcm}
\opn\trdeg{tr\,deg}
	\opn\reg{reg} \opn\lreg{lreg} \opn\ini{in} \opn\lpd{lpd}
	\opn\size{size} \opn\sdepth{sdepth}
	\opn\link{link}\opn\fdepth{fdepth}\opn\lex{lex}
	\opn\tr{tr}
   \opn\d{d}
	\opn\type{type}
	\opn\gap{gap}
	\opn\arithdeg{arith-deg}
	\opn\HS{HS}
	\opn\GL{GL}
	\opn\div{div} \opn\Div{Div} \opn\cl{cl} \opn\Cl{Cl}
	\opn\Spec{Spec} \opn\Supp{Supp} \opn\supp{supp} \opn\Sing{Sing}
	\opn\Ass{Ass} \opn\Min{Min}\opn\Mon{Mon}
	\opn\Ann{Ann} \opn\Rad{Rad} \opn\Soc{Soc}\opn\Deg{Deg}
	\opn\Im{Im} \opn\Ker{Ker} \opn\Coker{Coker} \opn\Am{Am}
	\opn\Hom{Hom} \opn\Tor{Tor} \opn\Ext{Ext} \opn\End{End}
	\opn\Aut{Aut} \opn\id{id}
	\opn\nat{nat}
	\opn\pff{pf}
	\opn\Pf{Pf} \opn\GL{GL} \opn\SL{SL} \opn\mod{mod} \opn\ord{ord}
	\opn\Gin{Gin} \opn\Hilb{Hilb}\opn\sort{sort}
	\opn\PF{PF}\opn\Ap{Ap}
	\opn\mult{mult}
	\opn\bight{bight}
	\opn\aff{aff}
	\opn\relint{relint} \opn\st{st}
	\opn\lk{lk} \opn\cn{cn} \opn\core{core} \opn\vol{vol}  \opn\inp{inp} \opn\nilpot{nilpot}
	\opn\link{link} \opn\star{star}\opn\lex{lex}\opn\set{set}
	\opn\width{wd}
	\opn\Fr{F}
	\opn\QF{QF}
	\opn\G{G}
	\opn\type{type}\opn\res{res}
	\opn\conv{conv}
	\opn\Ind{Ind}
	\opn\gr{gr}
	\def\pot#1#2{#1[\kern-0.28ex[#2]\kern-0.28ex]}
	\opn\dirlim{\underrightarrow{\lim}}
	\opn\inivlim{\underleftarrow{\lim}}
	\let\to=\rightarrow
	\def\Implies{\ifmmode\Longrightarrow \else
		\unskip${}\Longrightarrow{}$\ignorespaces\fi}
	\def\implies{\ifmmode\Rightarrow \else
		\unskip${}\Rightarrow{}$\ignorespaces\fi}
	\def\iff{\ifmmode\Longleftrightarrow \else
		\unskip${}\Longleftrightarrow{}$\ignorespaces\fi}
	\newtheorem{Theorem}{Theorem}[section]
	\newtheorem{Lemma}[Theorem]{Lemma}
	\newtheorem{Corollary}[Theorem]{Corollary}
	\newtheorem{Remark}[Theorem]{Remark}
	\newtheorem{Definition}[Theorem]{Definition}
	\newtheorem{Conjecture}[Theorem]{Conjecture}
	\newtheorem{Setting}[Theorem]{Setting}
	\newtheorem{Claim}[Theorem]{Claim}
	\let\epsilon\varepsilon
	\let\kappa=\varkappa
	\def\qed{\ifhmode\textqed\fi
		\ifmmode\ifinner\quad\qedsymbol\else\dispqed\fi\fi}
	\def\textqed{\unskip\nobreak\penalty50
		\hskip2em\hbox{}\nobreak\hfil\qedsymbol
		\parfillskip=0pt \finalhyphendemerits=0}
	\def\dispqed{\rlap{\qquad\qedsymbol}}
	\opn\dis{dis}
	\def\pnt{{\raise0.5mm\hbox{\large\bf.}}}
	\opn\Lex{Lex}
\begin{document}
		\title {The regularity of  monomial ideals and their integral closures}
		
		\author {Yijun Cui, Cheng Gong and Guangjun Zhu}

		\address{School of Mathematical Sciences, Soochow University, Suzhou, Jiangsu, 215006, P. R. China}

		\email{237546805@qq.com, cgong@suda.edu.cn, 
zhuguangjun@suda.edu.cn}
		
		
		
		\thanks{2020 {\em Mathematics Subject Classification}.
			Primary 13F20, 13C05; Secondary 13D02}

		\thanks{Keywords:  Regularity, equigenerated monomial ideal, linear quotients}

		
		
		
		\maketitle
		\begin{abstract}
		Let $I$ be a monomial ideal  in a polynomial ring $S=\KK[x_1,\ldots,x_n]$  over a field $\KK$ with $n=2$ or $3$, and let $\overline{I}$ be its integral closure. We will show that
 $\reg (\overline{I}) \le \reg (I)$.  Furthermore, if $I$ is generated by  elements of degree $d$,  then  $\reg (I)=d$ if and only if
 $I$ has linear quotients.
		\end{abstract}
		\setcounter{tocdepth}{1}

		\section{Introduction}
	In \cite{KP}, K\"uronya and Pintye  studied the relationship between the log--canonical thresholds and the Castelnouvo-Mumford regularity (or simply regularity)  of the coherent sheaves of ideals in projective spaces. As a byproduct, they came up with the following striking  conjecture:

\begin{Conjecture}\label{conj_KP} Let $S=\KK[x_1,\ldots,x_n]$ be  a polynomial ring with  $n$ variables over a field $\KK$ and let $I\subset S$ be
 a homogeneous ideal.  Then
\[
\reg (\overline{I}) \le \reg (I),
\]
where   $\overline{I}$ is the integral closure of $I$, and $\reg (I)$ is  the   regularity of $I$.
\end{Conjecture}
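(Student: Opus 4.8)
The plan is to isolate, and then bound, the defect of integral closure as a module. From the short exact sequence of graded $S$-modules
\[
0 \longrightarrow I \longrightarrow \overline{I} \longrightarrow \overline{I}/I \longrightarrow 0,
\]
the standard regularity estimate for the middle term gives $\reg(\overline{I}) \le \max\{\reg(I), \reg(\overline{I}/I)\}$. Hence it suffices to establish the single inequality $\reg(\overline{I}/I) \le \reg(I)$. This reformulation is attractive because it concentrates the whole difficulty in the module $\overline{I}/I$, which records exactly the degrees in which $\overline{I}$ strictly enlarges $I$; in particular it vanishes whenever $I$ is integrally closed, the case in which the conjecture is trivial.

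To attack $\reg(\overline{I}/I) \le \reg(I)$ I would first treat monomial ideals, where everything is explicit: $\overline{I}$ is the monomial ideal whose exponents are the lattice points of the Newton polyhedron $\conv\{a : x^a \in G(I)\} + \RR_{\ge 0}^n$, so that $\overline{I}/I$ has a $\KK$-basis given by the monomials lying in this polyhedron but outside $I$. I would then compute the regularity of $\overline{I}/I$ through its local cohomology $\Coh{i}{\overline{I}/I}$ by means of Takayama's combinatorial formula, which reads off the graded pieces from the reduced homology of simplicial complexes attached to each degree vector $a \in \ZZ^n$, and compare the top nonvanishing degrees with those of $\Coh{i}{S/I}$. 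Geometrically, filling in the staircase of $I$ up to its Newton polyhedron only moves exponent vectors toward the coordinate hyperplanes, which is the heuristic reason to expect the defect to sit in degrees no larger than $\reg(I)$.

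The hard part will be precisely this comparison in full generality. There is no monotonicity of the homology appearing in Takayama's formula as one passes from $I$ to $\overline{I}$, so the bound $\reg(\overline{I}/I) \le \reg(I)$ cannot be read off termwise once $n$ is large. Moreover, the natural reduction from arbitrary homogeneous ideals to monomial ones via initial or generic initial ideals fails here: integral closure does not commute with the formation of initial ideals, since leading terms may cancel in an equation of integral dependence, and the only available inequality $\reg(\ini_<(I)) \ge \reg(I)$ points in the wrong direction. I therefore expect the general conjecture to be out of reach of these methods, and the realistic outcome is a proof of $\reg(\overline{I}/I) \le \reg(I)$ only under extra hypotheses that make the Newton polyhedron tractable, notably equigeneration together with a small number of variables $n \le 3$, which is the setting in which the comparison can be carried out explicitly.
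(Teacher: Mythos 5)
Your proposal is not a proof: after the (correct) reduction via the short exact sequence $0 \to I \to \overline{I} \to \overline{I}/I \to 0$ to the inequality $\reg(\overline{I}/I) \le \reg(I)$, everything that follows is a plan rather than an argument. The comparison of top nonvanishing degrees of local cohomology via Takayama's formula --- the entire mathematical content --- is never carried out, and you yourself concede that it cannot be carried out in general (no monotonicity of the relevant homology, failure of the reduction to initial ideals since $\reg(\ini(I)) \ge \reg(I)$ points the wrong way) and that at best one could hope to treat equigenerated ideals in $n \le 3$ variables, which you also do not do. Since the statement is an open conjecture, and the paper itself only establishes these special cases, your assessment of the difficulty is accurate; but as a proof attempt the proposal establishes nothing beyond the elementary equivalence with the bound on $\reg(\overline{I}/I)$, which by Lemma \ref{exact} is not a genuine reduction in strength.

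It is also worth noting that the route the paper takes for the cases it does settle is different from the one you sketch. Rather than analyzing the module $\overline{I}/I$, the paper bounds $\reg(\overline{I})$ directly using Hoa's estimate (Lemma \ref{compare}), $\delta(I) \le \reg(\overline{I}) \le \delta(I) + \dim S/I$, after using primary decomposition to strip off a common monomial factor and reduce to $\dim(S/I)\le 1$; for an equigenerated ideal of degree $d$ this gives $\reg(\overline{I}) \le d+1$, which suffices whenever $\reg(I) \ge d+1$. The only remaining case, $\reg(I)=d$ with $n=3$, is handled by a combinatorial characterization: $\reg(I)=d$ if and only if $I$ has linear quotients (Theorem \ref{e7}), established through explicit conditions on exponent vectors (the conditions $(*)$ and $(**)$ of Definitions \ref{condition1} and \ref{condition2}), followed by a verification that these conditions are inherited by $\overline{I}$, giving $\reg(\overline{I})=d$ exactly (Theorem \ref{e8}). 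If you wanted to complete your program even for these same special cases, you would still need an input of this kind; the Takayama-formula comparison you propose is precisely the step for which your proposal offers no mechanism.
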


In this paper, we  investigate whether this conjecture holds for monomial
ideals. Since little is known about this conjecture, even for monomial ideals. This is mainly due to the difficulty of computing integral
closure and regularity of an ideal.

For certain special monomial ideals, such as the edge ideal $I(G)$ of a simple graph $G$, Nguyen and Vu \cite{MV} showed that $\reg(\overline{I(G)^s})\leq \reg(I(G)^s)$ for all $s\le 4$.  If $G$ has at most one odd cycle — for example, if $G$ is a bipartite or unicyclic graph — then its edge ideal, $I(G)$, is normal. Thus,
$\reg (\overline{I(G)^s})=\reg (I(G)^s)$ for all $s\ge 1$. In  \cite{KK}, Kumar and Kumar  showed that  $\reg (\overline{I(G)})=\reg (I(G))$ if  $G$ is an odd bicyclic graph or a bow graph. In \cite{NVZ}, Nguyen, Vu and the third author of this paper demonstrated
that this conjecture holds for the edge ideals of arbitrary weighted oriented graphs.

In this paper,  we will demonstrate that the conjecture holds for  monomial
ideals if $n=2$ or $3$. We will also demonstrate that when $n=2$ or $3$, for an equigenerated monomial ideal of degree
 $d$, $\reg(I)=d$  if and only if $I$ has linear quotients.

The article is organized as follows: Section \ref{sec:prelim} provides the definitions and basic facts that will be used throughout the paper. Section $3$ gives  a characterization of the regularity of an  equigenerated monomial ideal of degree $d$ is $d$, and uses this characterization to prove that  Conjecture \ref{conj_KP} holds.

\section{Preliminaries}
		\label{sec:prelim}
In this section, we will provide the definitions and basic facts that  will be used throughout this paper.  For more detailed information, please refer to \cite{E} and \cite{HH}.

Let $S=\KK[x_1, \ldots, x_n]$ be a polynomial ring with  $n$ variables over a field $\KK$. Let $I\subset S$ be a monomial ideal and let $\mathcal{G}(I)$ be the unique minimal set of its monomial generators. We say that $I$ has \textit{linear quotients}, if there exists an ordering $u_1,\ldots,u_m$ of $\mathcal{G}(I)$ such that for each $i=1,2,\ldots,m-1$, the colon ideal $(u_1,...,u_{i}): u_{i+1}$ is
generated by a subset of the variables. In this paper, we will  use the following famous conclusion many times: If $I$ has linear quotients, then $I$ has a linear resolution (see  \cite[Lemma 4.1]{CH}).

An element $f\in S$ is said to be \textit{integral} over $I$ if
 it satisfies an integral dependence relation
 \[
f^{\ell}+c_1f^{\ell-1}+\dots+c_{\ell-1}f+c_{\ell}=0 \text{\ \ with\ \ }c_i \in I^i \text{\ for any\ } i\in [\ell].
 \]
 The set of elements $u$ in $S$ that are integral over $I$ forms an ideal of $S$,
 called the \textit{ integral closure} of $I$, denoted by $\overline{I}$. The ideal $I$ is \textit{ integrally closed} if $\overline{I}=I$.
 If $I$ is a monomial ideal, then  $\overline{I}$ is  generated by all monomials $u$ such that  $u^{k}\in I^k$ for some $k\ge 1$, see, e.g., \cite[Theorem 1.4.2]{HH}.
Moreover, in this case, it can be described geometrically.
Let $\mathcal{G}(I)=\{\xb^{{\bb}_1},\ldots, \xb^{{\bb}_m}\}$, where ${\bb}_i=(b_{i1},\ldots,b_{in})$ is the exponent vector of $\xb^{{\bb}_i}$ for each $i\in [m]$ and  $[m]=\{1,\ldots,m\}$, then
$\overline{I}$  is a monomial ideal defined by the integral convex hull of the exponent vectors of $I$. That is,
$\overline{I}$ is  generated by the set $\{\xb^{\alpha}\mid \alpha\in  \mathcal{C}(I)\cap \mathbb{Z}^n\}$ of monomials, where $\mathcal{C}(I)$ is the  \textit{Newton polyhedron} of $I$ and is defined as
 \begin{align*}
    \mathcal{C}(I)&=conv(\mathcal{G}(I))+\mathbb{R}^n_+\\
    and  \ \ \ \ \ \ \ \  \ \ \ \ \ \ \ \ \  \ \ \ \ \ \ \ \ \ \ \  \ \ \ & \ \\
     conv(\mathcal{G}(I))&=conv({\bb}_1,\dots,{\bb}_m)\\
    &=\{\ab=\sum\limits_{i=1}\limits^{m}{\lambda_i{\bb}_i} \mid \sum\limits_{i=1}\limits^{m}{\lambda_i}=1, \lambda_i \in \mathbb{Q}_{+}\}.
   \end{align*}
Note that $\mathcal{C}(I)$ is a pointed polyhedron; that is to say, $\mathcal{C}(I)$ contains no trivial linear subspaces of $\mathbb{R}^n$.
 Therefore, each minimal face of $\mathcal{C}(I)$ consists of just one point, known as a vertex of $\mathcal{C}(I)$. The set of vertices of $\mathcal{C}(I)$, denoted $V(I)$, is uniquely defined by $\mathcal{C}(I)$ and is a subset of $\mathcal{G}(I)$.

\begin{Lemma}{\em (\cite[Proposition 12.1.4]{V})}\label{e0}
   Let $I\subset S$ be a monomial ideal with $\mathcal{G}(I)=\{\xb^{{\bb}_1},\ldots, \xb^{{\bb}_m}\}$. Then $\overline{I}$ is generated by the
monomials  $\xb^{\ab}$, where $\ab=(\lceil a_1 \rceil,\dots,\lceil a_n \rceil)$ with $(a_1,\ldots,a_n)\in \text{conv}(\mathcal{G}(I))$ and   each $\lceil a_i \rceil$ is the smallest integer $\ge a_i$.
\end{Lemma}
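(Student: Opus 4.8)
The plan is to deduce the statement from the Newton-polyhedron description of $\overline{I}$ already recorded above, namely that $\overline{I}$ is generated by the monomials $\xb^{\alpha}$ with $\alpha\in\mathcal{C}(I)\cap\ZZ^n$, where $\mathcal{C}(I)=\text{conv}(\mathcal{G}(I))+\RR^n_+$. For a point $w=(w_1,\dots,w_n)$ write $\lceil w\rceil=(\lceil w_1\rceil,\dots,\lceil w_n\rceil)$, and let $J$ be the ideal generated by the monomials $\xb^{\lceil w\rceil}$ with $w\in\text{conv}(\mathcal{G}(I))$. It then suffices to prove the two inclusions $J\subseteq\overline{I}$ and $\overline{I}\subseteq J$; since both ideals are monomial, this reduces to comparing exponent vectors.

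For $J\subseteq\overline{I}$, I would fix $w\in\text{conv}(\mathcal{G}(I))$ and note that $\lceil w\rceil-w\in\RR^n_+$, so $\lceil w\rceil=w+(\lceil w\rceil-w)\in\text{conv}(\mathcal{G}(I))+\RR^n_+=\mathcal{C}(I)$. As $\lceil w\rceil\in\ZZ^n$ by construction, the point $\lceil w\rceil$ lies in $\mathcal{C}(I)\cap\ZZ^n$, and hence $\xb^{\lceil w\rceil}\in\overline{I}$ by the recorded characterization. This yields the first inclusion.

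For the reverse inclusion it is enough to show that each generator $\xb^{\alpha}$ of $\overline{I}$, with $\alpha\in\mathcal{C}(I)\cap\ZZ^n$, is divisible by some $\xb^{\lceil w\rceil}\in J$. I would write $\alpha=w+r$ with $w\in\text{conv}(\mathcal{G}(I))$ and $r\in\RR^n_+$, which is possible exactly because $\alpha\in\mathcal{C}(I)$. Then $\alpha\ge w$ coordinatewise, and since $\alpha\in\ZZ^n$ while $\lceil w_i\rceil$ is by definition the least integer $\ge w_i$, each coordinate obeys $\alpha_i\ge\lceil w_i\rceil$. Thus $\lceil w\rceil\le\alpha$ coordinatewise, i.e. $\xb^{\lceil w\rceil}\mid\xb^{\alpha}$, so $\xb^{\alpha}\in J$. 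Combining the two inclusions gives $J=\overline{I}$, which is the assertion of the lemma.

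The only genuinely delicate point I anticipate is reconciling the definition of $\text{conv}(\mathcal{G}(I))$ adopted here, in which the convex combinations use coefficients $\lambda_i\in\QQ_+$, with the decomposition $\alpha=w+r$ in the reverse inclusion, which a priori only produces a real point $w$. Since $\mathcal{C}(I)$ is a rational polyhedron and $\alpha$ is an integer (hence rational) point, the linear feasibility system encoding ``$w\in\text{conv}(\mathcal{G}(I))$ and $w\le\alpha$'' has rational data and admits a real solution, so it admits a rational one; this lets me choose $w$ with rational barycentric coordinates, matching the stated definition. The integrality step ($\alpha_i\ge w_i$ and $\alpha_i\in\ZZ$ force $\alpha_i\ge\lceil w_i\rceil$) is elementary and is precisely the reason ceilings enter the statement.
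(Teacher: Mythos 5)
Your proposal is correct. Note, however, that the paper itself gives no proof of this lemma at all: it is quoted verbatim from Villarreal (Proposition 12.1.4 of \cite{V}), so there is no internal argument to compare against. What you have done is supply a self-contained derivation from the Newton-polyhedron description that the paper records immediately before the lemma, namely that $\overline{I}$ is generated by $\{\xb^{\alpha}\mid \alpha\in\mathcal{C}(I)\cap\ZZ^n\}$ with $\mathcal{C}(I)=\text{conv}(\mathcal{G}(I))+\RR^n_+$. Both inclusions are handled correctly: $\lceil w\rceil\in\mathcal{C}(I)\cap\ZZ^n$ because $\lceil w\rceil-w\in\RR^n_+$, and conversely any integer point $\alpha=w+r$ of $\mathcal{C}(I)$ satisfies $\alpha_i\ge\lceil w_i\rceil$ coordinatewise since $\alpha_i\in\ZZ$ and $\alpha_i\ge w_i$, so $\xb^{\lceil w\rceil}$ divides $\xb^{\alpha}$. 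One small remark: the rationality issue you flag at the end is actually moot under this paper's conventions, because its definition of $\text{conv}(\mathcal{G}(I))$ already restricts to rational coefficients $\lambda_i\in\QQ_+$, so the decomposition $\alpha=w+r$ with $w\in\text{conv}(\mathcal{G}(I))$ produces a rational $w$ by fiat; your rational-feasibility argument would only be needed if one worked with the real convex hull, and in that setting it is the standard and correct fix. In short, your argument is essentially the standard proof underlying the cited result, and it has the merit of making the paper's preliminaries logically self-contained rather than relying on an external reference.
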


The regularity of graded modules over the polynomial ring $S$ can be defined in various ways.
Let $\frak{m}$ be the  unique graded maximal  ideal in $S$. The local cohomology modules of a finitely generated
graded $S$-module $M$ with respect to $\frak{m}$  are denoted by $H_{\frak{m}}^i(M)$
for $i\in \ZZ$.
\begin{Definition}
    Let $M$ be a finitely generated graded $S$-module.
    For $i\ge 0$, let
    \[
    a_i(M)=\begin{cases}
			\max\{t\in \ZZ: [H_{\frak{m}}^i(M)]_t \ne 0\},& \text{if\ } H_{\frak{m}}^i(M)\ne 0,\\
-\infty,& \text{otherwise.}
				\end{cases}
    \]
   The {\em regularity} of $M$ is defined to be
\[
 \reg(M)= \max\{a_i(M) + i:i\ge 0\}.
 \]
\end{Definition}
Since $a_i(M)=0$ for $i>\dim(M)$, the regularity of $M$ is well defined. This invariant can also be computed via the minimal free resolution
\begin{Definition}
Let $M$ be a finitely generated graded $S$-module and let
$$0\rightarrow \bigoplus\limits_{j\in \ZZ}S(-j)^{\beta_{p,j}(M)}\rightarrow \cdots\rightarrow \bigoplus\limits_{j\in \ZZ}S(-j)^{\beta_{0,j}(M)}\rightarrow M\rightarrow 0$$
be its minimal free resolution. Then the regularity of $M$ is given by
\[
\reg(M)=\max\{j-i\mid \beta_{i,j}(M)\neq 0\}.
\]
\end{Definition}
Looking at the minimal free resolution, it is easy to see that $\reg(S/I)=\reg(I)-1$, so we shall work with $\reg(I)$ and $\reg(S/I)$ interchangeably.

For any vector $\mathbf{v}=(v_1,\ldots, v_n)\in \mathbb{Z}_{\ge 0}^n$, we set $|\mathbf{v}|=v_1+\cdots+v_n$. The following lemmas are often used for computing  regularity  of a module.
\begin{Lemma}  {\em (\cite[Theorem 2.7]{Hoa})}
	\label{compare}
Let $I\subseteq S$ be a monomial ideal and $\delta(I)=\max\{|\mathbf{v}| |\mathbf{v}\in V(I)\}$. Then
    \begin{align*}
        \delta(I) &\leq \d(\overline{I}) \leq \delta(I)+ \dim S/I, \\
        \delta(I) &\leq \reg(\overline{I}) \leq \delta(I) + \dim S/I,
    \end{align*}
where $\d(\overline{I})=\max\{|\mathbf{v}|\mid \xb^{\mathbf{v}}\in \mathcal{G}(\overline{I})\}$ is the maximal generating degree  of $\overline{I}$.
\end{Lemma}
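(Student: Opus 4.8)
The plan is to prove the common lower bound $\delta(I)\le \d(\overline{I})\le \reg(\overline{I})$ first, and then the two upper bounds by $\delta(I)+\dim S/I$ separately. For the lower bounds the key point is that every vertex of the Newton polyhedron is a minimal generator of $\overline{I}$: a vertex $\mathbf{v}\in V(I)$ is an extreme point of $\mathcal{C}(I)$, so it admits no expression $\mathbf{v}=\mathbf{w}+\eb$ with $\mathbf{w}\in \mathcal{C}(I)\cap \mathbb{Z}^n$ and $0\ne \eb\in \mathbb{Z}^n_{\ge 0}$, whence $\xb^{\mathbf{v}}\in \mathcal{G}(\overline{I})$. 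Taking $\mathbf{v}$ of maximal coordinate sum produces a minimal generator of degree $\delta(I)$, so $\beta_{0,\delta(I)}(\overline{I})\ne 0$; this gives $\delta(I)\le \d(\overline{I})$, and since a nonzero $\beta_{0,j}(\overline{I})$ forces $\reg(\overline{I})\ge j$, also $\d(\overline{I})\le \reg(\overline{I})$.

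For $\d(\overline{I})\le \delta(I)+\dim S/I$ I would argue geometrically from Lemma \ref{e0}. Write a minimal generator as $\xb^{\ab}$ with $\ab=\lceil \cb\rceil$ and $\cb=(c_1,\dots,c_n)\in \conv(\mathcal{G}(I))$, chosen so that $|\cb|$ is minimal among such representatives; then $\cb$ lies on a bounded face of $\mathcal{C}(I)$, and as the vertices of that face lie in $V(I)$, convexity gives $|\cb|\le \delta(I)$. Let $A=\{i: \lceil c_i\rceil>c_i\}$ be the set of coordinates genuinely rounded up. Using $\sum_{i\in A}\lceil c_i\rceil\le \lceil\sum_{i\in A}c_i\rceil+(|A|-1)$ together with the fact that the coordinates outside $A$ are already integral, one obtains the clean estimate $|\ab|\le \lceil|\cb|\rceil+|A|-1\le \delta(I)+|A|-1$. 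The remaining step is purely combinatorial: bound $|A|-1$ by $\dim S/I$. Here minimality of $\xb^{\ab}$ gives $\ab-\mathbf{e}_i\notin \mathcal{C}(I)$ for each $i\in A$, and these constraints can hold simultaneously only when the integral coordinates account for at least $\height(I)-1$ variables, so that $n-|A|\ge \height(I)-1$, that is $|A|-1\le \dim S/I$. (In particular, for $n=2$ one always has $|A|\le 1$, yielding the sharper $\d(\overline{I})=\delta(I)$.)

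The genuinely hard inequality is $\reg(\overline{I})\le \delta(I)+\dim S/I$, since regularity is not read off the Newton polyhedron and may exceed $\d(\overline{I})$. I would pass to $\reg(S/\overline{I})=\reg(\overline{I})-1=\max_i\{a_i(S/\overline{I})+i\}$ and estimate each local cohomology module $H^i_{\mm}(S/\overline{I})$, using that $a_i(S/\overline{I})=-\infty$ for $i>\dim S/I$. Because $\overline{I}$ is integrally closed, the graded components $[H^i_{\mm}(S/\overline{I})]_{\bb}$ are governed combinatorially by $\mathcal{C}(I)$ — for instance through Takayama's formula, which expresses them via the reduced simplicial homology of the degree complexes $\Delta_{\bb}$ — and the target is the vanishing $a_i(S/\overline{I})\le \delta(I)+\dim S/I-1-i$ for every $i$. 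Establishing this cohomological vanishing in the range $|\bb|>\delta(I)+\dim S/I-1-i$ is, I expect, the main obstacle. A natural alternative is induction on $\dim S/I$ via a variable restriction, relating $\reg(\overline{I})$ to the regularities of $\overline{I}:x_n$ and of the image of $\overline{I}$ in $S/(x_n)$ through the short exact sequence induced by multiplication by $x_n$; the base case $\dim S/I=0$ then reduces, by the lower bound already proved, to the sharp equality $\reg(\overline{I})=\delta(I)$ for $\mm$-primary integrally closed monomial ideals.
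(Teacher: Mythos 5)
First, a remark on the comparison itself: the paper offers no proof of this lemma --- it is quoted verbatim as \cite[Theorem 2.7]{Hoa} --- so there is no internal argument to measure your proposal against; what follows assesses it on its own terms. Your lower bounds are correct and complete. A vertex $\mathbf{v}\in V(I)$ is integral and extreme in $\mathcal{C}(I)$; if $\mathbf{v}=\mathbf{w}+\eb$ with $\mathbf{w}\in\mathcal{C}(I)\cap\ZZ^n$ and $0\neq\eb\in\ZZ^n_{\ge 0}$, then $\mathbf{v}=\tfrac12\mathbf{w}+\tfrac12(\mathbf{w}+2\eb)$ would be a proper convex combination of points of $\mathcal{C}(I)$, a contradiction; hence every vertex yields a minimal generator of $\overline{I}$, giving $\delta(I)\le \d(\overline{I})\le \reg(\overline{I})$.

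Both upper bounds, however, have genuine gaps. For $\d(\overline{I})\le\delta(I)+\dim S/I$, the reduction to $|\ab|\le \delta(I)+|A|-1$ is sound, and can even be done more cleanly: writing any representative as $\cb=\mathbf{v}'+\mathbf{r}$ with $\mathbf{v}'\in\conv(V(I))$ and $\mathbf{r}\ge 0$, minimality of $\xb^{\ab}$ forces $\lceil\mathbf{v}'\rceil=\ab$ (indeed, \emph{any} point of $\mathcal{C}(I)$ below $\ab$ rounds up to $\ab$), so a representative of degree at most $\delta(I)$ always exists. But your key step $|A|-1\le \dim S/I$ is asserted, not proved: the minimality constraints $\ab-\mathbf{e}_i\notin\mathcal{C}(I)$ restrict $\ab$, and say nothing direct about the fractional support of a chosen representative $\cb$. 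Worse, the two requirements on $\cb$ pull in opposite directions: descending a single coordinate of $\ab$ to the boundary gives a representative with $|A|\le 1$, but it may land on an unbounded facet where $|\cb|>\delta(I)$; descending all coordinates lands in $\conv(V(I))$, but can make all $n$ coordinates fractional. Producing one representative that simultaneously has degree at most $\delta(I)$ and at most $\dim S/I+1$ fractional coordinates is essentially the content of Hoa's theorem, so this ``purely combinatorial'' step is the theorem, not a lemma. Second, for $\reg(\overline{I})\le\delta(I)+\dim S/I$ you stop at a plan: Takayama's formula and degree complexes, or an induction whose base case ($\reg(\overline{I})=\delta(I)$ for $\mm$-primary integrally closed monomial ideals) is itself the dimension-zero instance of the statement being proved. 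As you acknowledge, the required vanishing of local cohomology is ``the main obstacle,'' and it is exactly the hard part of \cite[Theorem 2.7]{Hoa}. In sum, the proposal establishes only the two lower bounds.
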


\begin{Lemma}  {\em (\cite[Lemma 3.1]{HT})}
	\label{exact}
Let $0\longrightarrow M\longrightarrow N\longrightarrow P\longrightarrow 0$ be a short exact
	sequence of finitely generated graded S-modules. Then we have
	\begin{itemize}
		\item[(1)] $\reg(N)\leq max\{\reg(M), \reg(P)\}$, the equality holds if $\reg(P) \neq \reg(M)-1$.
\item[(2)] $\reg(P)\leq max\{\reg(M)-1, \reg(N)\}$, the  equality holds if $\reg(M) \neq \reg(N)$.
		\end{itemize}
\end{Lemma}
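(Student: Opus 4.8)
The plan is to deduce both statements from the long exact sequence in local cohomology attached to the short exact sequence, together with the description $\reg(M)=\max_i\{a_i(M)+i\}$ and the bound $a_i(M)+i\le \reg(M)$ recorded above. Applying the functors $\Coh{\bullet}{-}$ to $0\to M\to N\to P\to 0$ produces, for every $i$, the graded exact sequence $\cdots\to \Coh{i-1}{P}\to \Coh{i}{M}\to \Coh{i}{N}\to \Coh{i}{P}\to \Coh{i+1}{M}\to\cdots$ in which every map is homogeneous of degree $0$. The whole argument then reduces to reading off this sequence in each single degree $t$ and comparing the invariants $a_i$.

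First I would establish the two inequalities. Exactness at $\Coh{i}{N}$ shows that a nonzero class in $[\Coh{i}{N}]_t$ either lifts from $[\Coh{i}{M}]_t$ or maps to a nonzero class in $[\Coh{i}{P}]_t$; hence $a_i(N)\le\max\{a_i(M),a_i(P)\}$, and adding $i$ and maximizing over $i$ yields $(1)$. Likewise, exactness at $\Coh{i}{P}$ gives $a_i(P)\le\max\{a_i(N),a_{i+1}(M)\}$; rewriting $a_{i+1}(M)+i=(a_{i+1}(M)+(i+1))-1$ and maximizing over $i$ yields $(2)$.

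The delicate part, and the main obstacle, is the two equality assertions, where one must exhibit a degree in which the relevant connecting map is forced to vanish rather than merely bounded. For $(1)$, set $r=\max\{\reg(M),\reg(P)\}$ and treat $r=\reg(P)$ and $r=\reg(M)>\reg(P)$ separately. When $r=\reg(P)$, pick $i$ with $a_i(P)+i=r$ and put $t=a_i(P)$; since $\reg(M)\le r$ forces $a_{i+1}(M)\le t-1$, one gets $[\Coh{i+1}{M}]_t=0$, so $\Coh{i}{N}\to\Coh{i}{P}$ is onto in degree $t$ and $\reg(N)\ge t+i=r$. When $\reg(M)>\reg(P)$, the hypothesis $\reg(P)\neq\reg(M)-1$ upgrades to $\reg(P)\le\reg(M)-2$; for $i$ with $a_i(M)+i=r$ and $t=a_i(M)$ this bound forces $[\Coh{i-1}{P}]_t=0$, so $\Coh{i}{M}\to\Coh{i}{N}$ is injective in degree $t$ and $\reg(N)\ge r$. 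This is exactly where the excluded value $\reg(M)-1$ would break the vanishing. For $(2)$, with $s=\max\{\reg(M)-1,\reg(N)\}$ and the hypothesis $\reg(M)\neq\reg(N)$, the strict inequality between $\reg(M)$ and $\reg(N)$ again annihilates the offending term in the critical degree — a term $\Coh{i}{M}$ when $\reg(N)>\reg(M)$, and a term $\Coh{i}{N}$ when $\reg(N)<\reg(M)$ — which forces the appropriate map to be injective or surjective there and gives $\reg(P)\ge s$. I expect the only real subtlety to be the bookkeeping of these degree inequalities, ensuring each critical local cohomology term genuinely vanishes in the chosen degree; once that is arranged, the remaining steps are formal.
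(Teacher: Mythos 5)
Your proof is correct: both inequalities follow from the graded long exact sequence $\cdots\to \Coh{i-1}{P}\to \Coh{i}{M}\to \Coh{i}{N}\to \Coh{i}{P}\to \Coh{i+1}{M}\to\cdots$ exactly as you describe, and in each equality case your degree bookkeeping does force the vanishing, in the critical degree $t$, of precisely the term you name ($[\Coh{i+1}{M}]_t$ or $[\Coh{i-1}{P}]_t$ for part (1), $[\Coh{i}{M}]_t$ or $[\Coh{i}{N}]_t$ for part (2)), which makes the relevant map surjective or injective there and yields the reverse inequality. The paper itself gives no proof — the lemma is quoted from \cite{HT} — and your argument is the standard local-cohomology proof underlying that citation, so it matches the intended approach.
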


\begin{Lemma}{\em (\cite[Lemma 3.2]{HT})}
\label{sum1}
Let $S_{1}=\KK[x_{1},\dots,x_{m}]$ and $S_{2}=\KK[x_{m+1},\dots,x_{n}]$ be two polynomial rings  over a field $\KK$. Let  $I\subset S_{1}$,
$J\subset S_{2}$ be two nonzero homogeneous  ideals.  Then
\begin{itemize}
\item[(1)] $\reg(I+J)=\reg(I)+\reg(J)-1$,
\item[(2)]$\reg(IJ)=\reg(I)+\reg(J)$.
\end{itemize}
In particular,  if  $u$ is a monomial of degree  $t$,  then $\reg(uI)=t+\reg(I)$.
\end{Lemma}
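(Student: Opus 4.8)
The plan is to exploit the fact that $S=\KK[x_1,\dots,x_n]$ is the tensor product $S_1\otimes_\KK S_2$ and that $I$ and $J$ live in complementary sets of variables, so that the homological data over $S$ decouples into a product of the homological data over $S_1$ and over $S_2$. I would first record the two module identifications that drive both statements. As graded $S$-modules one has $S/(I+J)\cong (S_1/I)\otimes_\KK(S_2/J)$, since $(S_1\otimes_\KK S_2)/(I\otimes_\KK S_2+S_1\otimes_\KK J)=(S_1/I)\otimes_\KK(S_2/J)$. Moreover the multiplication map $I\otimes_\KK J\to S$, $f\otimes g\mapsto fg$, is $S$-linear and injective: because $\KK$ is a field, $-\otimes_\KK-$ is exact, so $I\otimes_\KK J\hookrightarrow S_1\otimes_\KK S_2=S$, and its image is exactly the ideal $IJ$. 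Hence $IJ\cong I\otimes_\KK J$ as graded $S$-modules.

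Next I would establish the key homological input, a Künneth-type formula for Betti numbers. If $F_\bullet\to M$ is the minimal graded free resolution of an $S_1$-module $M$ and $G_\bullet\to N$ that of an $S_2$-module $N$, then the tensor complex $F_\bullet\otimes_\KK G_\bullet$ is a graded free $S$-resolution of $M\otimes_\KK N$: each $F_p\otimes_\KK G_q=\bigoplus S(-a-b)$ is free, and exactness follows from flatness of $\KK$-tensoring, i.e. the Künneth theorem. This resolution is moreover minimal, because its differential on $F_p\otimes_\KK G_q$ is $d_F\otimes 1+(-1)^p\,1\otimes d_G$, whose matrix entries lie in $\mathfrak m_{S_1}+\mathfrak m_{S_2}\subseteq\mathfrak m_S$. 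Reading off graded Betti numbers then gives $\beta^S_{i,j}(M\otimes_\KK N)=\sum_{p+q=i,\,a+b=j}\beta^{S_1}_{p,a}(M)\,\beta^{S_2}_{q,b}(N)$, and maximizing $j-i=(a-p)+(b-q)$ over nonzero terms yields the additivity $\reg_S(M\otimes_\KK N)=\reg_{S_1}(M)+\reg_{S_2}(N)$.

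With this in hand both parts are immediate. For (2), apply additivity to $M=I$ and $N=J$ to get $\reg(IJ)=\reg(I\otimes_\KK J)=\reg(I)+\reg(J)$. For (1), apply it to $M=S_1/I$ and $N=S_2/J$, obtaining $\reg(S/(I+J))=\reg(S_1/I)+\reg(S_2/J)$; rewriting each quotient regularity via $\reg(S/\aa)=\reg(\aa)-1$ converts this into $\reg(I+J)=\reg(I)+\reg(J)-1$. The ``in particular'' clause is separate and lighter: for a monomial $u$ of degree $t$, multiplication by $u$ is injective (as $S$ is a domain), so $uI\cong I(-t)$ as graded modules and hence $\reg(uI)=\reg(I)+t$; alternatively this follows from (2) with $J=(u)$, using $\reg((u))=t$ since $(u)\cong S(-t)$ is free.

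The main obstacle I anticipate is the \emph{minimality} of the tensor product resolution $F_\bullet\otimes_\KK G_\bullet$. Exactness is the standard Künneth argument over the field $\KK$, but one must check carefully that the induced differential has all entries in the graded maximal ideal $\mathfrak m_S$ so that the Betti numbers can be read off directly, and one must track the internal grading (the degrees add under $\otimes_\KK$) so that the shift bookkeeping in $j-i$ is correct. One should also note that the regularities $\reg(I)$ and $\reg(J)$ are unaffected by extending scalars from $S_1$, $S_2$ to $S$, since a polynomial extension is flat and preserves minimality of resolutions. Once minimality and the grading are pinned down, the remaining deductions are purely formal.
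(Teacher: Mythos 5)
Your proposal is correct. Note that the paper itself gives no proof of this statement at all: it is quoted verbatim from the literature as \cite[Lemma 3.2]{HT}, so there is no internal argument to compare against. Your proof --- identifying $S/(I+J)$ with $(S_1/I)\otimes_{\KK}(S_2/J)$ and $IJ$ with $I\otimes_{\KK}J$, and then showing that the tensor product of the two minimal graded free resolutions is again a minimal free resolution, so that the graded Betti numbers multiply and the regularity adds --- is exactly the standard argument on which the cited result rests, and all the delicate points you flag (minimality of the tensor complex, additivity of internal degrees, invariance of $\reg$ under the flat extensions $S_1\to S$ and $S_2\to S$) are handled correctly. One small remark: for the ``in particular'' clause the monomial $u$ need not involve variables disjoint from those of $I$, so the deduction from part (2) is only valid in that special case; your primary argument via the graded isomorphism $uI\cong I(-t)$ (multiplication by $u$ being injective since $S$ is a domain) is the one that covers the general statement, and you were right to lead with it.
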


 \begin{Lemma}\label{regular}{\em (\cite[Lemma 4.4]{BHT})}
 	Let  $I=(u_1,\ldots, u_m)$ be an equigenerated monomial ideals of degree $d$. If $u_1,\ldots, u_m$ is a regular sequence, then
 \[
 \reg(I^t)=dt+(d-1)(m-1)  \text{\ for all\ } t\ge 1.
 \]
 \end{Lemma}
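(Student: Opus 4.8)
The plan is to induct on $t$, exploiting the fact that a regular sequence generates a complete intersection whose associated graded ring is a polynomial ring over $S/I$. First I would dispatch the base case $t=1$, which is just the regularity of a complete intersection: since $u_1,\dots,u_m$ is a regular sequence of forms of degree $d$, the Koszul complex $K_\bullet(u_1,\dots,u_m)$ is the minimal graded free resolution of $S/I$, with $i$-th term $\textstyle\bigwedge^{i}(S(-d)^m)=S(-di)^{\binom{m}{i}}$. Reading off the Betti numbers gives $\reg(S/I)=m(d-1)$, hence $\reg(I)=m(d-1)+1=d+(d-1)(m-1)$, the asserted value at $t=1$. Note this is insensitive to the number $n$ of ambient variables, since the Koszul complex resolves $S/I$ regardless of $n$.

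For the inductive step the crucial structural input is that, because $I$ is generated by a regular sequence, $\gr_I(S)\cong (S/I)[T_1,\dots,T_m]$ with each $T_i$ in internal degree $d$. Consequently every quotient $I^{t-1}/I^t$ is a \emph{free} $S/I$-module, namely $I^{t-1}/I^t\cong (S/I)(-d(t-1))^{\binom{m+t-2}{m-1}}$, so by the $t=1$ computation
\[
\reg\bigl(I^{t-1}/I^t\bigr)=\reg(S/I)+d(t-1)=m(d-1)+d(t-1).
\]
I then feed the short exact sequence $0\to I^t\to I^{t-1}\to I^{t-1}/I^t\to 0$ into the regularity estimates. An arithmetic check gives $\reg(I^{t-1}/I^t)=dt+(d-1)(m-1)-1$ and (by induction) $\reg(I^{t-1})=d(t-1)+(d-1)(m-1)$, and for $d>1$ one has $\reg(I^{t-1}/I^t)-\reg(I^{t-1})=d-1>0$. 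Hence Lemma \ref{exact}(2) forces $\reg(I^t)-1\ge \reg(I^{t-1}/I^t)$, yielding the lower bound $\reg(I^t)\ge dt+(d-1)(m-1)$; for the matching upper bound I apply the complementary (equally standard) estimate $\reg(M)\le\max\{\reg(N),\reg(P)+1\}$ that accompanies the two inequalities quoted in Lemma \ref{exact}, whose right-hand side here evaluates to exactly $dt+(d-1)(m-1)$. The case $d=1$ is immediate, as then $I$ is, after a linear change of coordinates, generated by variables, so $I^t$ has a linear resolution with $\reg(I^t)=t$.

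The step I expect to be the main obstacle is justifying the freeness of $I^{t-1}/I^t$ over $S/I$, i.e.\ the isomorphism $\gr_I(S)\cong (S/I)[T_1,\dots,T_m]$; this is precisely where the regular-sequence hypothesis is used essentially, being the standard characterization of complete intersections among ideals of linear type. A slicker but less self-contained alternative bypasses the induction: the graded Betti numbers of $I^t$ for a complete intersection depend only on $m$, $d$, $t$, so one may compute them for the model regular sequence $u_i=x_i^d$ in $\KK[x_1,\dots,x_m]$. There the flat base change $\KK[y_1,\dots,y_m]\to\KK[x_1,\dots,x_m]$, $y_i\mapsto x_i^d$, carries the linear minimal free resolution of $(y_1,\dots,y_m)^t$ to a resolution of $I^t$ that is still minimal (its differentials retain non-unit entries) while multiplying every internal degree by $d$. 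Since that resolution has projective dimension $m-1$ and is linear starting in degree $t$, its $i$-th term lands in internal degree $d(t+i)$, so $\reg(I^t)=\max_{0\le i\le m-1}\{d(t+i)-i\}=dt+(d-1)(m-1)$, attained at $i=m-1$, which recovers the formula in one stroke.
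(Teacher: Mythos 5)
Your proof is correct, but there is nothing in the paper to compare it with: the paper does not prove this lemma, it simply imports it from \cite[Lemma 4.4]{BHT}, so your argument is a genuine self-contained replacement for that citation. Your main route is sound: the Koszul complex (minimal, since the $u_i$ have positive degree) gives $\reg(S/I)=m(d-1)$, hence $\reg(I)=d+(d-1)(m-1)$ for $t=1$; the isomorphism $\gr_I(S)\cong (S/I)[T_1,\dots,T_m]$, which is the standard property of ideals generated by regular sequences, gives the freeness $I^{t-1}/I^t\cong (S/I)(-d(t-1))^{\binom{m+t-2}{m-1}}$ and thus $\reg(I^{t-1}/I^t)=dt+(d-1)(m-1)-1$; and the short exact sequence $0\to I^t\to I^{t-1}\to I^{t-1}/I^t\to 0$ closes the induction, with $d=1$ correctly handled separately since your argument needs $d-1>0$. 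Two remarks. First, the upper-bound estimate $\reg(M)\le\max\{\reg(N),\reg(P)+1\}$ you invoke is standard but is not literally contained in Lemma \ref{exact}; you can avoid it altogether, because once the lower bound forces $\reg(I^t)>\reg(I^{t-1})$, the equality clause of Lemma \ref{exact}(2) gives $\reg(I^{t-1}/I^t)=\max\{\reg(I^t)-1,\reg(I^{t-1})\}=\reg(I^t)-1$, which yields both inequalities at once. Second, your ``slicker'' alternative rests on the unproved assertion that the graded Betti numbers of $I^t$ depend only on $m$, $d$, $t$; to justify it one needs, for instance, that $S$ is flat over the polynomial subring $\KK[u_1,\dots,u_m]$ (miracle flatness, using that $S$ is Cohen--Macaulay and all fibers have dimension $n-m$), after which the base-change computation you carry out for the model $u_i=x_i^d$ applies verbatim to any such regular sequence; as written, that paragraph is a sketch rather than a proof, so the inductive argument should remain your primary one.
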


Polarization and Betti splitting decomposition techniques are important methods and techniques for  computing regularity of a module.
\begin{Definition}{\em(\cite[Definition 2.1]{F})}\label{polarization}
Let $I\subset S$ be a monomial ideal with $\mathcal{G}(I)=\{u_1,\ldots,u_m\}$, where $u_i=\prod\limits_{j=1}^n x_j^{a_{ij}}$ for any $i\in [m]$.
The polarization of $I$, denoted by $I^{\calP}$, is the following squarefree monomial ideal in the polynomial ring $S^{\calP}$
$$I^{\calP}=(\calP(u_1),\ldots,\calP(u_m)),$$
where $\calP(u_i)=\prod\limits_{j=1}^n \prod\limits_{k=1}^{a_{ij}} x_{jk}$ is a squarefree monomial  in $S^{\calP}=\KK[x_{j1},\ldots,x_{ja_j}\mid j\in [n]]$ and $a_j=\max\{a_{ij}|i\in [m]\}$ for  any $j\in [n]$.
\end{Definition}

 The following is a  useful property of the polarization.

\begin{Lemma}{\em (see, e.g.,\cite[Corollary 1.6.3]{HH})}\label{polar}
 Let $I\subset S$ be a monomial ideal and $I^{\calP}\subset S^{\calP}$ be its polarization.
Then
\[
\beta_{ij}(I)=\beta_{ij}(I^{\calP})
\]
 for all $i$ and $j$. In particular, $\reg(I)=\reg(I^{\calP})$.
\end{Lemma}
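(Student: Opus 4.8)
The plan is to realize the full polarization as the outcome of splitting one variable at a time, and to show that each split corresponds to dividing out a single regular linear form, an operation under which graded Betti numbers are preserved. For each $j\in[n]$ and each $2\le k\le a_j$ set $z_{jk}=x_{jk}-x_{j1}\in S^{\calP}$, and list these linear forms as $z_1,\dots,z_N$ in any convenient order. I claim that $z_1,\dots,z_N$ is a regular sequence on $R:=S^{\calP}/I^{\calP}$ and that $R/(z_1,\dots,z_N)\cong S/I$ as graded $\KK$-algebras. The isomorphism is immediate from Definition \ref{polarization}: the substitution $x_{jk}\mapsto x_{j1}$ (for all $k$) maps $S^{\calP}$ onto $S$ via $x_{j1}\mapsto x_j$, and it returns each squarefree generator $\calP(u_i)=\prod_{j=1}^n\prod_{k=1}^{a_{ij}}x_{jk}$ to $u_i=\prod_{j=1}^n x_j^{a_{ij}}$, with kernel exactly $(z_1,\dots,z_N)$.

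The crux is the regular-sequence claim, which I would verify one form at a time. Let $R'=S^{\calP}/(I^{\calP}+(z_1,\dots,z_{t-1}))$ be the quotient reached after imposing the first $t-1$ forms. Since $(z_1,\dots,z_{t-1})$ is generated by variable differences, dividing by it identifies the corresponding variables, so $R'\cong S'/I'$ where $S'$ is a polynomial ring in fewer variables and $I'$ is the monomial ideal obtained from $I^{\calP}$ by a partial unpolarization. Because the associated primes of any monomial ideal are generated by subsets of the variables, the variable difference $z_t=x_{jk}-x_{j1}$ fails to be a nonzerodivisor on $R'$ only if some associated prime $\mathfrak{p}$ of $I'$ contains both $x_{jk}$ and $x_{j1}$ (note that, for a suitable ordering, $x_{jk}$ and $x_{j1}$ are still distinct variables of $S'$ at step $t$). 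Verifying that this never occurs is the combinatorial heart of the polarization theorem: it is where the prefix (``staircase'') shape of the polarized generators---within the $j$-th block a generator uses an initial segment $x_{j1},\dots,x_{j,a_{ij}}$---must be exploited. This is the step I expect to be the main obstacle; the rest is formal.

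Granting the regular sequence, I would invoke the standard homological principle: if $z$ is a linear form that is a nonzerodivisor on a finitely generated graded module $M$ over a polynomial ring $R$, then $\beta_{ij}^{R}(M)=\beta_{ij}^{R/(z)}(M/zM)$ for all $i,j$. Indeed, regularity of $z$ on $M$ forces $\Tor_i^{R}(M,R/(z))=0$ for $i\ge 1$, so tensoring the minimal graded free resolution of $M$ with $R/(z)$ keeps it acyclic; it remains minimal because every differential entry lies in the graded maximal ideal and stays there after reduction modulo the linear form $z$. Hence the Betti table is unchanged by each split.

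Applying this principle once for each $z_t$ and composing yields $\beta_{ij}^{S^{\calP}}(S^{\calP}/I^{\calP})=\beta_{ij}^{S}(S/I)$ for all $i,j$. Passing to the defining short exact sequences $0\to I\to S\to S/I\to 0$ and its analogue over $S^{\calP}$ shifts the homological index by one and transfers the equality to the ideals, giving $\beta_{ij}(I)=\beta_{ij}(I^{\calP})$ for all $i,j$. Since $\reg$ is the maximum of $j-i$ over the nonvanishing Betti numbers, $\reg(I)=\reg(I^{\calP})$ follows immediately.
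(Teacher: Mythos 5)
Your outline follows the same route as the proof of the cited result itself (the paper offers no argument for this lemma; it simply quotes \cite[Corollary 1.6.3]{HH}, which is deduced there from \cite[Proposition 1.6.2]{HH} in exactly the manner you describe). The homological scaffolding in your proposal is correct: the identification $S^{\calP}/(I^{\calP}+(z_1,\dots,z_N))\cong S/I$, the criterion that a variable difference is a zerodivisor modulo a monomial ideal precisely when some associated prime contains both variables, the fact that factoring a graded module by a linear nonzerodivisor preserves graded Betti numbers (via vanishing of $\Tor_i$ for $i\ge 1$ and preservation of minimality), and the index shift that transfers the equality from $S/I$ to $I$.

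The genuine gap is the one you flag yourself: you never prove that $z_1,\dots,z_N$ is a regular sequence on $S^{\calP}/I^{\calP}$, and this claim is not a formality --- it is the entire content of \cite[Proposition 1.6.2]{HH}, so as written your argument merely reduces the lemma to the substantive half of the theorem it is supposed to prove. To see that the claim genuinely depends on the prefix structure of polarized generators and cannot be waved through, consider $J=(x_{11}z,\,x_{12}w)\subset \KK[x_{11},x_{12},z,w]$: here $(x_{11},x_{12})$ is an associated prime (it equals $J:zw$), so $x_{12}-x_{11}$ is a zerodivisor, and indeed identifying $x_{12}$ with $x_{11}$ changes the Betti table ($J$ is a complete intersection with $\beta_{1,4}=1$, while $(x_{11}z,x_{11}w)=x_{11}(z,w)$ has $\beta_{1,3}=1$). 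The ideal $J$ fails to be a polarization exactly because its second generator involves $x_{12}$ but not $x_{11}$; ruling out such associated primes for genuine (partial) polarizations --- for instance by taking an associated prime $\pp=(I':f)$ containing both $x_{j1}$ and $x_{jk}$ and using the prefix property of the generators to contradict $f\notin I'$ --- is the combinatorial argument your proof still owes. Until that step is supplied, the proposal is an accurate road map of the standard proof, not a proof.
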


\begin{Definition}{\em (\cite[Definition 1.1]{FHT})}\label{bettispliting}
		Let $I$ be a monomial ideal. If there exist monomial ideals $J$ and $K$ such that $\mathcal{G}(I)=\mathcal{G}(J)\cup\mathcal{G}(K)$ and $\mathcal{G}(J)\cap\mathcal{G}(K)=\emptyset$. Then $I=J+K$ is a Betti splitting if
\[
		\beta_{i, j}(I)=\beta_{i, j}(J)+\beta_{i, j}(K)+\beta_{i-1, j}(J \cap K) \text { for all } i, j \geq 0,
\]
		where $\beta_{i-1, j}(J \cap K)=0$ for $i=0$.
\end{Definition}

\begin{Lemma}{\em (\cite[Corollary 2.7]{FHT})}\label{spliting}
Suppose  $I=J+K$, where $\mathcal{G}(J)$ consists
of all the generators of $I$ that are divisible by some variable $x_i$, and $\mathcal{G}(K)$ is a nonempty set containing the remaining generators of $I$. If $J$ has a linear resolution, then $I=J+K$ is a Betti splitting.

From Definition \ref{bettispliting}, we know that if $I=J+K$ is a Betti splitting then   $\reg(I)=\max\{\reg(J), \reg(K), \reg(J \cap K)-1\}$.
\end{Lemma}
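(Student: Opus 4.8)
The statement has two parts: that $I=J+K$ is a Betti splitting (this is the cited fact \cite[Corollary 2.7]{FHT}), and the resulting formula for $\reg(I)$. For the first part the plan is to run the Mayer--Vietoris argument attached to the short exact sequence
\[
0\longrightarrow J\cap K \xrightarrow{\ w\mapsto (w,-w)\ } J\oplus K \xrightarrow{\ (a,b)\mapsto a+b\ } I\longrightarrow 0,
\]
which is exact precisely because $\mathcal{G}(I)=\mathcal{G}(J)\sqcup\mathcal{G}(K)$ and $I=J+K$. Applying $\Tor_\bullet^S(-,\KK)$ yields a long exact sequence, and a rank count carried out degree by degree shows that the additivity demanded in Definition \ref{bettispliting} holds for all $i,j$ if and only if the inclusion-induced maps
\[
\alpha_i\colon \Tor_i^S(J\cap K,\KK)\longrightarrow \Tor_i^S(J,\KK)\oplus \Tor_i^S(K,\KK)
\]
vanish for every $i$. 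Since the target is a direct sum, this reduces to showing that each of the two component maps, into $\Tor_i(J,\KK)$ and into $\Tor_i(K,\KK)$, is individually zero.

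For the component into $\Tor_i(J,\KK)$ I would argue by degrees. Because $J$ has a linear resolution it is generated in a single degree $d$, so $\beta_{i,j}(J)=0$ unless $j=i+d$. On the other hand every minimal generator of $J\cap K$ is a least common multiple $\lcm(u,v)$ with $u\in\mathcal{G}(J)$ and $v\in\mathcal{G}(K)$; as $u,v$ are distinct minimal generators of $I$ neither divides the other, so $\deg\lcm(u,v)>\deg u=d$ and hence the least generating degree of $J\cap K$ is at least $d+1$. The standard bound that $\beta_{i,j}(M)=0$ whenever $j-i$ is smaller than the least generating degree of $M$ then forces $\Tor_i(J\cap K,\KK)_{i+d}=0$, so in every internal degree at most one of $\Tor_i(J\cap K,\KK)$ and $\Tor_i(J,\KK)$ is nonzero; being degree preserving, the map between them must vanish.

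The component into $\Tor_i(K,\KK)$ is where the hypothesis on $\mathcal{G}(J)$ really enters, and I expect this to be the main point. Since every generator of $J$ is divisible by $x_i$ we have $J\cap K\subseteq (x_i)\cap K$; and because no generator of $K$ is divisible by $x_i$ one checks $K:x_i=K$, whence $(x_i)\cap K=x_i\,(K:x_i)=x_iK$. Thus the inclusion $J\cap K\hookrightarrow K$ factors through $x_iK$. Now $x_iK\cong K(-1)$ as graded modules ($K$ is torsion free, so multiplication by $x_i$ is an isomorphism onto $x_iK$), and under this identification the inclusion $x_iK\hookrightarrow K$ becomes multiplication by $x_i$, that is, a map $K(-1)\to K$. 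The induced map on $\Tor_i(-,\KK)$ is multiplication by $x_i$ on $\Tor_i(K,\KK)$, which is zero because $\mm$ annihilates $\Tor_i(-,\KK)$ (the differentials in a minimal free resolution have entries in $\mm$). Consequently the composite $\Tor_i(J\cap K,\KK)\to \Tor_i(x_iK,\KK)\to\Tor_i(K,\KK)$ is zero, as required, and the Betti splitting follows.

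Finally, granting the additivity $\beta_{i,j}(I)=\beta_{i,j}(J)+\beta_{i,j}(K)+\beta_{i-1,j}(J\cap K)$, the regularity formula is immediate from the Betti-number description of regularity: if $\beta_{i,j}(I)\neq 0$ then one of the three summands is nonzero, and taking $\max\{j-i\}$ over each family yields respectively $\reg(J)$, $\reg(K)$, and (after the reindexing $i\mapsto i-1$) $\reg(J\cap K)-1$, so $\reg(I)=\max\{\reg(J),\reg(K),\reg(J\cap K)-1\}$. The only genuinely delicate step in the whole argument is the vanishing of the $\Tor_i(K,\KK)$-component; once the factorization through $x_iK$ is spotted the rest is routine, so I would present that reduction first and treat the degree argument and the regularity bookkeeping as formalities.
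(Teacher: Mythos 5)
Your proof is correct, but there is no proof in the paper to compare it with: this lemma is imported verbatim from Francisco--H\`a--Van Tuyl \cite{FHT} (the splitting claim is their Corollary 2.7, and the regularity formula is stated in the paper as a direct consequence of Definition \ref{bettispliting}). What you have written is essentially a reconstruction of the argument in \cite{FHT}, and a faithful one. Your rank count on the Mayer--Vietoris sequence $0\to J\cap K\to J\oplus K\to I\to 0$, reducing the Betti splitting property to the vanishing of the graded maps $\Tor_i(J\cap K,\KK)\to\Tor_i(J,\KK)\oplus\Tor_i(K,\KK)$, is precisely their homological characterization of Betti splittings; the vanishing of the component into $\Tor_i(K,\KK)$, via $J\cap K\subseteq(x_i)\cap K=x_i(K:x_i)=x_iK$ together with the fact that $x_i\in\mm$ annihilates $\Tor_i(K,\KK)$, is exactly how the $x_i$-partition hypothesis is exploited there; and the vanishing of the component into $\Tor_i(J,\KK)$ is the same degree comparison (linearity of $J$ concentrates $\Tor_i(J,\KK)$ in internal degree $i+d$, while $J\cap K$ is generated in degrees at least $d+1$ because distinct minimal generators of $I$ are incomparable under divisibility). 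Two minor points of hygiene: exactness of the Mayer--Vietoris sequence needs only $I=J+K$, not the disjointness of the generating sets (disjointness matters only so that the definition of Betti splitting applies); and you need only the easy direction of your stated equivalence, namely that vanishing of the maps breaks the long exact sequence into short exact sequences and yields additivity of the graded Betti numbers. The regularity bookkeeping at the end, including the index shift that produces $\reg(J\cap K)-1$, is correct.
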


Recall that a  hypergraph  $H = (V, \mathcal{E})$ consists of a vertex set $V$ and a edge set $\mathcal{E}$ (nonempty subsets of $V$). $H$ is simple if no edges are properly contained in others.
For a subset $Y\subseteq V$, the \textit{induced subhypergraph} $H[Y]$ has vertex set $Y$ and edges $\{E \in \mathcal{E} \mid E \subseteq Y\}$.

\begin{Lemma}{\em (\cite[Lemma 3.1]{H})}\label{hyper}
 Let H be a simple hypergraph. Then $\reg(H)\geq\reg(H')$ for any induced subhypergraph $H'$ of $H$.
	\end{Lemma}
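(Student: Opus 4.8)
The plan is to realize $\reg(H)$ as the regularity of the edge ideal and reduce to deleting one vertex. Write $H=(V,\mathcal E)$, set $S=\KK[x_v\mid v\in V]$, and let $I(H)=\bigl(\prod_{v\in E}x_v\mid E\in\mathcal E\bigr)$ be the edge ideal, so that $\reg(H)=\reg(S/I(H))$ (equivalently $\reg(I(H))$; the shift by $1$ does not affect the asserted inequality). Since every induced subhypergraph $H[Y]$ arises by deleting the vertices of $V\setminus Y$ one at a time, and this operation is transitive, it suffices to treat a single deletion $H\setminus v:=H[V\setminus\{v\}]$ and then iterate. The edges of $H\setminus v$ are exactly the edges of $H$ avoiding $v$, so substituting $x_v=0$ kills precisely the generators of $I(H)$ divisible by $x_v$ and fixes the others; hence $I(H)+(x_v)=I(H\setminus v)+(x_v)$ and $S/(I(H)+(x_v))\cong S_{V\setminus v}/I(H\setminus v)$ with $S_{V\setminus v}=\KK[x_w\mid w\ne v]$. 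As $(x_v)$ and $I(H\setminus v)$ involve disjoint variables, Lemma~\ref{sum1}(1) gives $\reg(S/(I(H)+(x_v)))=\reg(H\setminus v)$.

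I would then feed the standard short exact sequence into Lemma~\ref{exact}(2): from
\[
0\longrightarrow \big(S/(I(H):x_v)\big)(-1)\xrightarrow{\ \cdot x_v\ } S/I(H)\longrightarrow S/(I(H)+(x_v))\longrightarrow 0,
\]
together with $\reg(M(-1))=\reg(M)+1$, one obtains $\reg(H\setminus v)\le\max\{\reg(S/(I(H):x_v)),\,\reg(H)\}$. Thus the lemma reduces, along this route, to the colon inequality $\reg(S/(I(H):x_v))\le\reg(S/I(H))$, and I expect this to be the main obstacle: the colon by a single variable corresponds to a star/link operation on the independence complex, and it is not formal that it cannot raise regularity.

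To settle this cleanly — indeed to prove the whole statement in one stroke and bypass the exact-sequence bookkeeping — I would invoke Hochster's formula. Writing $\Delta=\Ind(H)$ for the independence complex, so that $I(H)$ is its Stanley--Reisner ideal, the nonzero graded Betti numbers $\beta_{i,W}(S/I(H))$ are indexed by subsets $W\subseteq V$ through the reduced (co)homology of the induced subcomplex $\Delta[W]$. For every $W\subseteq Y$ one has $\Delta[W]=\Ind(H[Y])[W]$, since whether a subset of $W$ contains an edge depends only on the edges lying inside $Y$; hence each Betti number of $S_Y/I(H[Y])$ already occurs among those of $S/I(H)$, giving $\beta_{i,j}(S_Y/I(H[Y]))\le\beta_{i,j}(S/I(H))$ for all $i,j$. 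Taking the maximum of $j-i$ over nonvanishing Betti numbers then yields $\reg(H[Y])\le\reg(H)$ at once, without even needing the reduction to a single vertex. The only point requiring care is the precise matching $\Delta[W]=\Ind(H[Y])[W]$ for $W\subseteq Y$ (equivalently the colon inequality above); once this combinatorial identification is verified, the lemma follows.
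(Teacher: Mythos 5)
The paper gives no proof of this lemma at all---it is quoted verbatim from \cite[Lemma 3.1]{H}---so the only comparison available is with the standard proof in that reference, which is essentially your Hochster-formula argument: for $W\subseteq Y$ one has $\Ind(H)[W]=\Ind(H[Y])[W]$, hence every squarefree Betti number of $S_Y/I(H[Y])$ occurs among those of $S/I(H)$, giving $\beta_{i,j}(S_Y/I(H[Y]))\le\beta_{i,j}(S/I(H))$ for all $i,j$ and therefore $\reg(H[Y])\le\reg(H)$. Your argument is correct as it stands; the preliminary exact-sequence route was rightly abandoned, since the colon inequality $\reg(S/(I(H):x_v))\le\reg(S/I(H))$ it requires is not formal, but nothing in the Hochster-formula proof you ultimately give depends on it.
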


Given a monomial ideal  $I\subset S$,  for the rest of the paper we will regard its polarisation $I^{\calP}$ as the edge ideal of a hypergraph $H$.

\section{Regularity of monomial ideals and their integral closures}
\label{sec:reg}

In this section, we will establish a relationship between the regularity of the  monomial ideal and that of its integral closure.

We first show that it suffices to prove the conjecture for ideals generated in one degree, see Claim~\ref{claim:1} and Corollary~\ref{cor:2} below.

Let $I \subset S := k[x_1, \ldots, x_n]$ be a homogeneous ideal.

\begin{Claim}\label{claim:1}
Let $d := d(I)$ the maximal generating degree of $I$. Denote by $I_a$ the $d$-th component of $I$ and $I_{\geq a}$ the ideal whose homogeneous elements have degrees at least $a$. Then
\begin{enumerate}
\item For all $a \geq d$, $I_{\geq a} = (I_a)$.
\item For all $a \geq 1$, $\reg(I) \leq \reg(I_{\geq a})$ and $\reg(I) = \reg((I_d))$.
\item For all $a \geq d$, $\overline{(I_a)} = \overline{I}_{\geq a}$.
\end{enumerate}
\end{Claim}

\begin{proof}
(1) Let $\mathcal{G}(I)$ be a minimal set of generators of $I$, and $g \in \mathcal{G}(I)$. Then $\deg(g) \leq d$. If $m$ is a monomial such that $\deg(gm) \geq a \geq \deg(g)$, one can write $m = m'm''$ such that $\deg(gm') = a$. Hence $gm \in (I_a)$. Any homogeneous element $f$ of $I_{\geq a}$ has a form $\sum\limits_{g_i \in \mathcal{G}(I)} g_i m_i$ for some monomials $m_i$ such that $\deg(g_i m_i) = \deg(f) \geq a$. From the above discussion we see that $f \in (I_a)$.

(2) Note that for all $a \geq 1$, $I/I_{\geq a}$ is of finite length. Hence, from the exact sequence
\[
0 \to I/I_{\geq a} \to R/I_{\geq a} \to R/I \to 0,
\]
we get $\reg(I) \leq \reg(I_{\geq a})$. Putting $a = d$ in the above exact sequence
\[
0 \to I/I_{\geq d} \to R/I_{\geq d} \to R/I \to 0,
\]
and note that $\reg(R/I) \geq d-1$, we get
\[
\reg(I_{\geq d}) = 1 + \reg(R/I_{\geq d}) \leq 1 + \max\{d-1, \reg(R/I)\} = 1 + \reg(R/I) = \reg(I).
\]
Combining with the above inequality, we get $\reg(I) = \reg(I_{\geq d}) = \reg((I_d))$ (by (1)).

(3) It suffices to show $\supseteq$. Let $f \in \overline{I}_{\geq a}$ be a homogeneous polynomial. Then $p := \deg(f) \geq a$, and $f$ satisfies an equation of type
\[
f^m + f^{m-1}c_1 + \cdots + c_{m-1}f + c_m = 0,
\]
where $c_i \in I^i$ are homogeneous polynomials of degree $pi$. Fix $i \geq 1$. Then
\[
c_i = \sum_{g_1, \ldots, g_i \in \mathcal{G}(I)} g_1 \cdots g_i \alpha_{g_1, \ldots, g_i} m_{g_1, \ldots, g_i},
\]
where $\alpha_{g_1, \ldots, g_i} \in k$ and $\deg(m_{g_1, \ldots, g_i}) + \deg(g_1) + \cdots + \deg(g_i) = pi$. One can find monomials $m_1, \ldots, m_i, m$ such that $m_{g_1, \ldots, g_i} = m_1 \cdots m_i m$ and $\deg(m_i) + \deg(g_i) = a$. Then
\[
g_1 \cdots g_i m_{g_1, \ldots, g_i} = (g_1 m_1) \cdots (g_i m_i) m \in (I_a)^i.
\]
This implies $f \in \overline{(I_a)}$.
\end{proof}

As consequence, we get

\begin{Corollary}\label{cor:2}
If conjecture holds for equigenerated ideals, then it hold in general.
\end{Corollary}

\begin{proof}
Let $d = d(I)$. We have
\begin{align*}
\reg(I) &\leq \reg(I_{\geq d}) \text{ (by Claim~\ref{claim:1}(2))}\\
 &= \reg((I_d)) \text{ (by Claim~\ref{claim:1}(3))}\\
  &\leq \reg(\overline{(I_d)}) \text{ (by Assumption)}\\
   &= \reg(\overline{I}) \text{ (by Claim~\ref{claim:1}(2))}.
\end{align*}
\end{proof}

For a monomial $\xb^{\mathbf{a}}= x_1^{a_1} \cdots x_n^{a_n}$ in $S$ with exponent vector $\mathbf{a}=(a_1,\ldots,a_n)$, we set $\deg_{x_i}(\xb^{\mathbf{a}})=a_i$ and $\deg(\xb^{\mathbf{a}})=\sum\limits_{i=1}^n\deg_{x_i}(\xb^{\mathbf{a}})$.

\begin{Remark}\label{rem:3}
Let $f \in S$ be a homogeneous polynomial. Then it is easy to show that
\begin{enumerate}
\item $\reg(fI) = \deg(f) + \reg(I)$.
\item $\overline{fI} = f\overline{I}$, whence $\reg(\overline{fI}) = \deg(f) + \reg(\overline{I})$.
\item Let $I = (x^d, x^{d-a_1}y^{a_1}, \ldots, x^{d-a_{k-1}}y^{a_{k-1}}, y^d)$, where $a_0 := 0 < a_1 < \cdots < a_{k-1} < d =: a_k$. Since $x^{d-(a_i+1)}y^{a_{i+1}-1} \notin I$ if $a_{i+1} \geq a_i + 2$,
\[
\reg(I) \ge \max\{d + a_{i+1} - a_i - 1 \mid i = 0, \ldots, k-1\}.
\]
In particular, $\reg(I) = d$ if and only if $a_{i+1} = a_i + 1$ for all $i \leq k-1$.
\end{enumerate}
\end{Remark}

By Corollary \ref{cor:2} and Remark \ref{rem:3}(3), we have the following result:
\begin{Theorem}\label{2-variable}
Let $I\subset S=\KK[x_1,x_2]$ be a non-zero monomial ideal. Then
\[
\reg(\overline{I})\leq\reg(I).
\]
\end{Theorem}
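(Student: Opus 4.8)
The plan is to reduce to the case where $I$ is $\mm$-primary, because there the upper bound of Lemma~\ref{compare} collapses to something directly controlled by the generator degrees of $I$. Write $\mathcal{G}(I)=\{u_1,\dots,u_m\}$ with $u_i=x_1^{a_i}x_2^{b_i}$, ordered in staircase normal form so that $a_1>\cdots>a_m\ge 0$ and $0\le b_1<\cdots<b_m$. If $m=1$ then $I$ is principal, hence integrally closed, and $\reg(\overline I)=\reg(I)$ trivially; so I would assume $m\ge 2$ throughout.

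First I would strip the common monomial factor. Let $g=\gcd(u_1,\dots,u_m)=x_1^{a_m}x_2^{b_1}$ and write $I=g\,I_0$ with $\gcd(\mathcal{G}(I_0))=1$. Since the integral closure of a monomial ideal commutes with multiplication by a monomial (via the translation $\mathcal{C}(g\,I_0)=\mathbf{v}_g+\mathcal{C}(I_0)$ of Newton polyhedra together with the ceiling description in Lemma~\ref{e0}), one gets $\overline{I}=g\,\overline{I_0}$. Combining this with the last assertion of Lemma~\ref{sum1}, namely $\reg(g\,J)=\deg(g)+\reg(J)$, the inequality $\reg(\overline I)\le \reg(I)$ becomes equivalent to $\reg(\overline{I_0})\le\reg(I_0)$. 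Hence I may assume $\gcd(\mathcal{G}(I))=1$ from the outset, i.e. $a_m=0$ and $b_1=0$.

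The point of this reduction — and where $n=2$ is essential — is that an ideal with $m\ge 2$ generators and trivial gcd is automatically $\mm$-primary: the extreme staircase generators are $u_1=x_1^{a_1}$ and $u_m=x_2^{b_m}$ with $a_1,b_m\ge 1$, so $x_1^{a_1},x_2^{b_m}\in I$ and $\sqrt I=\mm$. Thus $\dim S/I=0$, and Lemma~\ref{compare} reduces to $\reg(\overline I)\le \delta(I)$. Finally, since $V(I)\subseteq \mathcal{G}(I)$, the quantity $\delta(I)=\max\{|\mathbf v|\mid \mathbf v\in V(I)\}$ is at most the largest degree $D$ of a minimal generator of $I$; and a minimal generator of degree $D$ forces $\beta_{0,D}(I)\neq 0$, so $\reg(I)\ge D$. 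Chaining these yields $\reg(\overline I)\le\delta(I)\le D\le\reg(I)$, as desired.

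The only real obstacle is that Lemma~\ref{compare} alone does \emph{not} suffice: when $\dim S/I=1$ (one variable divides every generator) the bound $\reg(\overline I)\le\delta(I)+\dim S/I$ overshoots $\reg(I)$ — for instance $I=(x_1^3,x_1x_2)$ gives $\delta(I)+\dim S/I=4$ while $\reg(I)=3$, so the naive estimate fails to prove the claim. Removing the common factor first is exactly what repairs this, and it is the step to be justified with care (the commutation $\overline{g\,I_0}=g\,\overline{I_0}$ and the $\mm$-primary conclusion). Beyond this bookkeeping I expect no genuine difficulty, since the final inequalities are formal consequences of $V(I)\subseteq\mathcal{G}(I)$ and the definition of regularity.
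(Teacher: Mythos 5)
Your proposal is correct and follows essentially the same route as the paper: the paper also factors out the common monomial divisor (phrased via primary decomposition as $I=X^{\mathbf a}J$ with $\overline{I}=X^{\mathbf a}\overline{J}$ and the corresponding shift in regularity), reduces to the case $\dim(S/I)=0$, and then concludes from Lemma~\ref{compare} together with the trivial bound (maximal generating degree) $\le\reg(I)$. Your write-up merely makes explicit some steps the paper leaves implicit, such as why the gcd-stripped ideal is $\mm$-primary and why the reduction is genuinely needed when $\dim S/I=1$.
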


Now we are ready to prove the first main result of this section.
\begin{Theorem}\label{e9}
Let $I\subset S=\KK[x_1,x_2,x_3]$ be an equigenerated monomial ideal.  Then $\reg(\overline{I})\leq \reg(I)$.
\end{Theorem}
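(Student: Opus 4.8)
The plan is to reduce to the case where the generators have no common factor and then exploit Lemma~\ref{compare}. Writing $\xb^{\mathbf a}=\gcd(\mathcal G(I))$, we have $I=\xb^{\mathbf a}J$ with $\gcd(\mathcal G(J))=1$, where $J$ is again equigenerated and $\overline I=\xb^{\mathbf a}\overline J$. By the last clause of Lemma~\ref{sum1}, $\reg(I)=|\mathbf a|+\reg(J)$ and $\reg(\overline I)=|\mathbf a|+\reg(\overline J)$, so it suffices to treat $J$; hence I assume $\gcd(\mathcal G(I))=1$. Then no single variable divides every generator, so $I$ has no height-one monomial minimal prime and $\dim(S/I)\le 1$. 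Since $I$ is equigenerated of degree $d$, every vertex of the Newton polyhedron is a generator, so $\delta(I)=d$, and Lemma~\ref{compare} gives $\reg(\overline I)\le d+\dim(S/I)\le d+1$.

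Next I would split on $\dim(S/I)$. If $\dim(S/I)=0$, then $\reg(\overline I)\le d$, and since the minimal generators of $I$ have degree $d$ we have $\reg(I)\ge d$, so $\reg(\overline I)\le\reg(I)$. If $\dim(S/I)=1$, then $\reg(\overline I)\le d+1$, and the desired inequality is immediate whenever $\reg(I)\ge d+1$. The only surviving case is $\reg(I)=d$, where the crude bound $d+1$ is off by one and must be improved to $d$.

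To close this case I would show that an equigenerated ideal of degree $d$ in three variables with $\reg(I)=d$ is integrally closed, so that $\reg(\overline I)=\reg(I)=d$. Here I would invoke (or first establish) the companion characterization that $\reg(I)=d$ is equivalent to $I$ having linear quotients, and then pass to the planar picture: the exponents of the degree-$d$ part of $I$ form a finite set $A$ of lattice points in the triangle $\{a_1+a_2+a_3=d\}$, and, because the $\mathbb R^3_+$-part of the Newton polyhedron strictly raises the coordinate sum, $\overline I=I$ is equivalent to $A$ being exactly the set of lattice points of its convex hull (with no overhang producing higher-degree generators). Assuming $\overline I\neq I$, a missing lattice point of $\conv(A)$ yields a pair of generators $u,v$ whose least common multiple has degree at least $d+2$; the aim is to show that this forces a genuinely minimal, nonlinear first syzygy, hence $\reg(I)\ge d+1$, contradicting $\reg(I)=d$. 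I would detect such a syzygy either by locating, after polarization (Lemma~\ref{polar}), an induced subhypergraph whose edge ideal has regularity $\ge d+1$ and applying the monotonicity of Lemma~\ref{hyper}, or by isolating a regular sequence among the generators and invoking Lemma~\ref{regular}, or via a Betti splitting as in Lemma~\ref{spliting}.

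The main obstacle is precisely this last step: showing that a hole in $A$ (the failure of integral closedness) always produces a syzygy that remains \emph{minimal in $I$ itself}, rather than being cancelled by the intervening generators. Convexity in the plane makes the existence of a sufficiently spread-out pair $u,v$ easy, but verifying that the associated degree-$(\ge d+2)$ syzygy is not a consequence of lower ones—equivalently, that the nonlinearity it represents genuinely raises $\reg(I)$—requires a careful analysis of the generators lying between $u$ and $v$. This is exactly where the restriction to three variables, which forces $\conv(A)$ to be a planar polygon, is essential.
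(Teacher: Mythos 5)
Your opening reductions are correct and essentially coincide with the paper's own proof: factoring out the common factor of the generators, observing that $\delta(I)=d$, and applying Lemma~\ref{compare} disposes of the case $\dim(S/I)=0$ and of the case $\reg(I)\ge d+1$, so everything hinges on the remaining case $\dim(S/I)=1$, $\reg(I)=d$. There, however, your plan rests on a claim that is false: an equigenerated monomial ideal of degree $d$ in three variables with $\reg(I)=d$ need \emph{not} be integrally closed. Concretely, take $d=3$ and
\[
I=(x_1^2x_2,\ x_1x_2^2,\ x_1^2x_3,\ x_2^2x_3).
\]
In the order listed the colon ideals are $(x_1)$, $(x_2)$, $(x_1)$, so $I$ has linear quotients and $\reg(I)=3=d$; moreover $\gcd(\mathcal{G}(I))=1$ and $\dim(S/I)=1$, so this example lies exactly in your surviving case. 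Yet $(x_1x_2x_3)^2=(x_1^2x_3)(x_2^2x_3)\in I^2$, so $x_1x_2x_3\in\overline{I}$, while no generator of $I$ divides $x_1x_2x_3$; hence $\overline{I}\neq I$. The same example shows why your fallback strategy cannot succeed: the pair $x_1^2x_3$, $x_2^2x_3$ witnesses the missing lattice point $(1,1,1)$ and has least common multiple of degree $d+2$, but the corresponding syzygy is \emph{not} minimal --- it is cancelled through the intervening generators $x_1^2x_2$, $x_1x_2^2$ --- and $\reg(I)$ really is $d$. So the difficulty you flag at the end is not a technical gap to be filled; the implication you are trying to prove (a ``hole'' in the degree-$d$ lattice set forces $\reg(I)\ge d+1$) is simply false.

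What is true, and what the paper proves instead (Theorem~\ref{e8}), is the weaker statement that suffices: if $\reg(I)=d$, then $\overline{I}$ is again an equigenerated ideal of degree $d$ satisfying the conditions $(**)$ of Definition~\ref{condition2}, hence has linear quotients and $\reg(\overline{I})=d$ by Lemma~\ref{e6}, even though $\mathcal{G}(\overline{I})$ may strictly contain $\mathcal{G}(I)$. In the example above, $\overline{I}=I+(x_1x_2x_3)$, and this larger ideal again has linear quotients. Repairing your argument therefore requires replacing ``$I$ is integrally closed'' by ``$\overline{I}$ inherits the linear-quotients property,'' and establishing that inheritance (including the fact that $\overline{I}$ is still generated in degree $d$) is precisely the substantial content of the paper's Lemmas~\ref{e4}--\ref{e6} and Theorem~\ref{e8}; it does not follow from the convexity considerations you outline.
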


To prove this theorem, we need the following series of lemmas and theorems.

By polarization and an easy indication of hypergraph and an induced hypergraph, using Lemma \ref{hyper}, we have:

\begin{Claim}\label{claim:4}
Given a monomial ideal $I$ and $a_1, \ldots, a_n \in \mathbb{N}$. Denote by $I_{x_i \leq a_i \mid i=1,\ldots,n}$ the subideal of $I$ generated by monomial generators $\mathbf{x}^{\mathbf{b}}$ of $I$ such that $b_i \leq a_i$ for all $i = 1, \ldots, n$ (if $\emptyset \neq U \subset [n]$, then $I_{x_i \leq a_i \mid i \in U}$ means $a_j \gg 0$ for $j \notin U$.) Then
\[
\reg(I_{x_i \leq a_i \mid i=1,\ldots,n}) \leq \reg(I).
\]
\end{Claim}

Next lemma is an immediate consequence of Remark \ref{rem:3}
\begin{Lemma}\label{e1}
Let $I\subset S=\KK[x_1,x_2]$ be an equigenerated monomial ideal with $\mathcal{G}(I)=\{u_1,\ldots,u_m\}$, where $u_i=x_1^{\deg_{x_1}(u_i)}x_2^{\deg_{x_2}(u_i)}$  with  $\deg_{x_1}(u_1)<\deg_{x_1}(u_2)<\cdots<\deg_{x_1}(u_m)$ and $\deg_{x_1}(u_i)+\deg_{x_2}(u_i)=d$ for each $i\in [m]$. Then $\reg(I)=d$ if and only if $m=1$, or  $m\ge2$ and
$\deg_{x_1}(u_{i+1})-\deg_{x_1}(u_i)=1$ for each $i\in[m-1]$.
\end{Lemma}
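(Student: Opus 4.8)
The plan is to reduce the whole statement to the single quantity $g_i:=\deg_{x_1}(u_{i+1})-\deg_{x_1}(u_i)\ge 1$. Writing $a_i=\deg_{x_1}(u_i)$, so that $u_i=x_1^{a_i}x_2^{d-a_i}$ with $a_1<\cdots<a_m$, the benchmark $\reg(I)=d$ records exactly that $I$ has a linear resolution (and $d$ is the smallest value $\reg(I)$ can take, since all generators sit in degree $d$). I will prove the two implications separately, each time handling a gap of size $\ge 2$ as the obstruction to linearity. The case $m=1$ is immediate: $I$ is principal of degree $d$, so $\reg(I)=d$.

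For the ``if'' direction, suppose $m\ge 2$ and $a_{i+1}-a_i=1$ for all $i$, so $a_i=a_1+(i-1)$ and $a_m-a_1=m-1$. Factoring out the common monomial $x_1^{a_1}x_2^{d-a_m}$ exhibits $I=x_1^{a_1}x_2^{d-a_m}\cdot\mathfrak{m}^{m-1}$, where $\mathfrak{m}=(x_1,x_2)$: the remaining factors $x_1^{i-1}x_2^{m-i}$ ($i\in[m]$) are precisely the $m$ monomials of degree $m-1$ in two variables. Since $\mathfrak{m}^{m-1}$ has linear quotients, hence a linear resolution, $\reg(\mathfrak{m}^{m-1})=m-1$, and Lemma \ref{sum1} gives $\reg(I)=(a_1+d-a_m)+(m-1)=d$, using $a_m-a_1=m-1$.

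For the ``only if'' direction I argue contrapositively: if some gap $g:=a_{i+1}-a_i\ge 2$, I extract the two-generator subideal $I'=(u_i,u_{i+1})$ and force $\reg(I)>d$ through it. Factoring out the $\gcd$ yields $I'=x_1^{a_i}x_2^{d-a_{i+1}}\cdot(x_1^{g},x_2^{g})$. The pair $x_1^{g},x_2^{g}$ is a regular sequence, so Lemma \ref{regular} (two generators of degree $g$, $t=1$) gives $\reg(x_1^{g},x_2^{g})=2g-1$, and then Lemma \ref{sum1} gives $\reg(I')=(d-g)+(2g-1)=d+g-1\ge d+1$. It remains to transfer this lower bound up to $I$, for which I pass to polarizations and invoke Lemma \ref{hyper}.

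The delicate step — and the main obstacle — is precisely this transfer. Regarding $I^{\mathcal{P}}$ as the edge ideal of a hypergraph $H$, each $u_j$ polarizes to the edge $E_j=\{x_{1,1},\ldots,x_{1,a_j},x_{2,1},\ldots,x_{2,d-a_j}\}$. I will show that the induced subhypergraph on $Y=E_i\cup E_{i+1}$ has exactly the two edges $E_i,E_{i+1}$: an edge $E_j\subseteq Y$ forces $a_j\le a_{i+1}$ (from the $x_1$-variables) and $a_j\ge a_i$ (from the $x_2$-variables), i.e. $a_i\le a_j\le a_{i+1}$, and since $a_i,a_{i+1}$ are adjacent values in the strictly increasing list we must have $j\in\{i,i+1\}$. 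Hence $H[Y]$ is the polarization of $I'$ up to variables that occur in neither edge, which do not affect regularity. Combining Lemma \ref{polar} and Lemma \ref{hyper} then gives $\reg(I)=\reg(I^{\mathcal{P}})=\reg(H)\ge\reg(H[Y])=\reg(I')\ge d+1$, so $\reg(I)\neq d$. The only genuinely subtle point is verifying that no \emph{other} generator polarizes into $E_i\cup E_{i+1}$, which is exactly where the ordering by $x_1$-degree and the minimality of $\mathcal{G}(I)$ are used; everything else is bookkeeping with Lemmas \ref{regular}, \ref{sum1}, \ref{polar}, and \ref{hyper}.
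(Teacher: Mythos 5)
Your proof is correct, and its crux coincides with the paper's. For the ``only if'' direction you take exactly the paper's route: extract the two-generator subideal $(u_i,u_{i+1})$ across a gap $g\ge 2$, factor out the gcd to compute its regularity as $d+g-1\ge d+1$ (you quote Lemma \ref{regular} for $\reg(x_1^{g},x_2^{g})=2g-1$ where the paper uses Lemma \ref{sum1}; immaterial), and transfer the lower bound to $I$ via polarization, Lemma \ref{polar}, and Lemma \ref{hyper}. In fact you handle the transfer more carefully than the paper does: the paper merely asserts that $J^{\mathcal{P}}$ is the edge ideal of an induced subhypergraph of $H$, whereas you verify that no third generator polarizes into $E_i\cup E_{i+1}$ --- precisely the point where the strict monotonicity of the $x_1$-degrees is used and the only place the assertion could fail. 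Your ``if'' direction genuinely differs: the paper exhibits linear quotients, checking $(u_1,\ldots,u_i):u_{i+1}=(x_2)$ for all $i$, while you identify $I=x_1^{a_1}x_2^{d-a_m}\,\mathfrak{m}^{m-1}$ with $\mathfrak{m}=(x_1,x_2)$ and conclude from $\reg(\mathfrak{m}^{m-1})=m-1$ and Lemma \ref{sum1}. Both are one-line arguments; yours is arguably more transparent, but note that the paper's version produces the linear-quotients ordering that is reused downstream (e.g.\ in the proof of Theorem \ref{e7}), so if your proof were substituted one would add the trivial remark that $u\cdot\mathfrak{m}^{m-1}$ has linear quotients.
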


\begin{Setting}\label{setting1}
Let $I\subset S=\KK[x_1,x_2,x_3]$ be an equigenerated monomial ideal of degree $d$. Let $I=\sum\limits_{i=1}^{t}I_i$ with  $t\ge 2$, where $I_i=(u_{i1},\ldots, u_{i\ell_i})x_3^{c_i}$ such that  $c_1<\cdots<c_t$ and $(u_{i1},\ldots, u_{i\ell_i})$ is an equigenerated monomial ideal in $\KK[x_1,x_2]$.  By changing the order of  the elements in $\{u_{i1},\ldots, u_{i\ell_i}\}$, we can assume that  each $u_{ij}=x_1^{\deg_{x_1}(u_{ij})}x_2^{\deg_{x_2}(u_{ij})}$  with  $0\le\deg_{x_1}(u_{i1})<\deg_{x_1}(u_{i2})<\cdots<\deg_{x_1}(u_{i\ell_i})$, $\deg_{x_2}(u_{i1})>\deg_{x_2}(u_{i2})>\cdots>\deg_{x_2}(u_{i\ell_i})\ge 0$  for any $i\in [t]$. Using Remark $\ref{rem:3} (1)$ and $(2)$, one can assume that $c_1=0$. For simplicity, denote $\deg_{x_i}(u_{jl})$ by $\d_{i,jl}$ or $\d_i(u_{jl})$.
\end{Setting}

First we consider the case where  $I=I_1+I_2$ has the form as in Setting \ref{setting1}.
\begin{Lemma}\label{e1.5}
Let  $I=I_1+I_2$  be an equigenerated monomial ideal of  degree $d$ as in Setting \ref{setting1}.
If $\reg(I)=d$, then
$\d_{i}(u_{1j})=\d_{i}(u_{2k})$ for some $i\in [2]$,  $j\in [\ell_1]$ and $k\in [\ell_2]$.
\end{Lemma}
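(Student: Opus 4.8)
The plan is to prove the contrapositive: assuming no $x_1$- or $x_2$-degree is shared between the two groups, i.e. $\{\deg_{x_1}(u_{1j})\}\cap\{\deg_{x_1}(u_{2k})\}=\emptyset$ and $\{\deg_{x_2}(u_{1j})\}\cap\{\deg_{x_2}(u_{2k})\}=\emptyset$, I will show $\reg(I)\ge d+1$. First I would factor out the common power $x_3^{c_1}$ from every generator; by the last part of Lemma \ref{sum1} this only shifts the regularity, so I may assume $c_1=0$ and write $I=J_1+J_2x_3^{\Delta}$ with $\Delta=c_2\ge 1$, where $J_1,J_2\subset\KK[x_1,x_2]$ are equigenerated of degrees $d_1=d$ and $d_2=d-\Delta$. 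Writing $A_i$ for the set of $x_1$-exponents occurring in $J_i$, the two hypotheses become $A_1\cap A_2=\emptyset$ and $A_1\cap(A_2+\Delta)=\emptyset$ (the second records that no $x_2$-degrees coincide, since $d_1-a=d_2-a'$ means $a=a'+\Delta$).

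The technical heart is a lower bound for the intersection. Putting $I_1=J_1$ and $I_2=J_2x_3^{\Delta}$, one checks $I_1\cap I_2=(J_1\cap J_2)x_3^{\Delta}$, so by Lemma \ref{sum1} it suffices to bound $\reg(J_1\cap J_2)$. Every minimal generator of $J_1\cap J_2$ is an $\lcm$ of a generator $x_1^ax_2^{d_1-a}$ of $J_1$ ($a\in A_1$) and a generator $x_1^{a'}x_2^{d_2-a'}$ of $J_2$ ($a'\in A_2$), of degree $\max(a,a')+\max(d_1-a,d_2-a')$. A short computation rewrites this as $d_2+\tfrac12\big(\Delta+|t|+|\Delta-t|\big)$ with $t=a-a'$, which equals $d_1$ exactly when $0\le t\le\Delta$ and is strictly larger otherwise. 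The two disjointness hypotheses forbid precisely $t=0$ and $t=\Delta$, so every such $\lcm$, hence every minimal generator of $J_1\cap J_2$, has degree at least $d_2+2$. Therefore $\reg(J_1\cap J_2)\ge d_2+2$ and $\reg(I_1\cap I_2)\ge d+2$. This is the step I expect to carry the whole argument.

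It remains to transfer this bound to $\reg(I)$, and the natural split is according to whether $I_1,I_2$ have linear resolutions, which by Lemma \ref{e1} (see Remark \ref{re1}) happens iff $A_1,A_2$ are intervals of consecutive integers. If both are, then $\reg(I_1)=\reg(I_2)=d$, so $\reg(I_1\oplus I_2)=d$, and applying Lemma \ref{exact}(2) to the Mayer--Vietoris sequence
\[
0\longrightarrow I_1\cap I_2\longrightarrow I_1\oplus I_2\longrightarrow I\longrightarrow 0
\]
gives, since $\reg(I_1\cap I_2)\ge d+2\neq d$, the equality $\reg(I)=\reg(I_1\cap I_2)-1\ge d+1$. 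If instead $J_1$ is not an interval, then $\reg(I_1)>d$, and since the generators of $I_1$ are exactly the $x_3$-free generators of $I$, the polarization $I_1^{\calP}$ is an induced subhypergraph of $I^{\calP}$; Lemma \ref{hyper} then yields $\reg(I)\ge\reg(I_1)>d$.

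The case I expect to be the main obstacle is $J_2$ not an interval, so that $I_2$ has no linear resolution and the Mayer--Vietoris comparison loses its decisive strict inequality. Here I would instead exhibit an explicit first syzygy of large degree: ordering the minimal generators of $J_1+J_2\subset\KK[x_1,x_2]$ by $x_1$-degree, I would select a consecutive pair that is either of mixed type (one from $J_1$, one from $J_2$) or a pair of $J_2$-generators straddling a genuine gap of $A_2$. In two variables no other minimal generator divides the $\lcm$ of two consecutive minimal generators; after multiplying by $x_3^{\Delta}$, and using the two disjointness hypotheses once more to exclude that a $J_1$-generator of the form $x_1^{a'+\Delta}x_2^{\cdots}$ or an absorbed generator acts as a divisor, the lifted $\lcm$ (of degree $\ge d+2$ by the computation above) is divided by no generator of $I$ except the chosen two, whence $\beta_{1,\ast}(I)\neq0$ in degree $\ge d+2$ and $\reg(I)\ge d+1$. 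The delicate bookkeeping is exactly this verification that no third generator divides the witness, which is where both no-matching hypotheses are indispensable; the residual sub-case in which every $J_1$-generator is a multiple of some $J_2$-generator while $A_2$ is already an interval falls back to the Mayer--Vietoris argument of the previous paragraph.
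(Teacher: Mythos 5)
Your reduction to $c_1=0$, your degree formula for the generators of $J_1\cap J_2$, and your first two cases are correct: when $A_1,A_2$ are both intervals, the Mayer--Vietoris sequence plus Lemma \ref{exact}(2) and the bound $\reg(I_1\cap I_2)\ge d+2$ gives $\reg(I)\ge d+1$, and when $A_1$ is not an interval the induced-subhypergraph argument via Lemmas \ref{polar} and \ref{hyper} applies. This is also a genuinely different organization from the paper's: the paper argues by contradiction from $\reg(I)=d$, which by Lemma \ref{e1} immediately forces $A_1$ to be an interval, and then runs a positional case analysis on where the generators of $I_2$ lie relative to that interval, using Betti splittings (Lemma \ref{spliting}) on small witness ideals; it never needs to distinguish whether $A_2$ is an interval.

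The genuine gap is your third case ($A_2$ not an interval), which you yourself flag as the main obstacle. Everything there rests on the claim that some mixed consecutive pair, or some gap pair of $J_2$-generators, has a lifted $\lcm$ divisible by no third generator of $I$; you assert that ``the two disjointness hypotheses'' exclude absorbed $J_1$-generators as divisors, but that is precisely what is not proved, and disjointness alone does not do it: an absorbed generator $x_1^bx_2^{d-b}$ (one with $a'<b<a'+\Delta$ for some $a'\in A_2$) is a minimal generator of $I$ but not of $J_1+J_2$, so the two-variable consecutivity fact says nothing about it. What actually closes the case is a structural observation you never make: since $A_1$ is an interval avoiding both $A_2$ and $A_2+\Delta$, it is either contained in a single open interval $(a',a'+\Delta)$, so that all of $J_1$ is absorbed, or it meets none of them, so that nothing is absorbed. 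In the second situation there are no absorbed generators and a mixed consecutive pair works; in the first, only gap pairs are available, and one must check that an absorbed $b$ dividing a gap-pair $\lcm$ would force an element of $A_2$ strictly between the two gap endpoints, contradicting their consecutivity in $A_2$. Your fallback sentence (the residual sub-case ``in which $A_2$ is already an interval'' reverting to Mayer--Vietoris) is incoherent, since that sub-case is not part of the case you are in. So the plan is completable, but as written the decisive verification is missing. A minor point: your appeal to $\beta_{1,\ast}(I)\neq 0$ via the lcm lattice is correct but superfluous; ``no third generator of $I$ divides $\lcm(u,v)$'' is exactly the condition for $(u,v)^{\calP}$ to be an induced subhypergraph of $H$, so Lemmas \ref{polar}, \ref{hyper} and \ref{sum1} already yield $\reg(I)\ge\deg(\lcm(u,v))-1$, which is how the paper treats all of its witness ideals.
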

\begin{proof}
One has $I_1 = I_{x_3 \leq 0}$, and by Claim \ref{claim:4}, $\reg(I_1) = d$.. By Lemma \ref{e1}, we have $\ell_1=1$, or  $\ell_1\ge2$ and $\d_{1}(u_{1(j+1)})-\d_{1}(u_{1j})=1$ for any $j\in[\ell_1-1]$.

In the following, we will show that $\d_{i}(u_{1j})=\d_{i}(u_{2k})$ for some $i\in [2]$,  $j\in [\ell_1]$ and $k\in [\ell_2]$.
By contradiction, assume that $\d_{i}(u_{1j})\neq\d_{i}(u_{2k})$ for any $i\in [2]$, $j\in[\ell_1]$ and $k\in[\ell_2]$. Then
$\d_{x_1}(u_{2k})<\deg_{x_1}(u_{11})$ or $\d_{x_1}(u_{2k})>\deg_{x_1}(u_{1\ell_1})$ for any $k\in [\ell_2]$, since $\d_{x_i}(u_{1j})\neq\d_{x_i}(u_{2k})$ for any $j\in[\ell_1]$. Similarly, $\d_{x_2}(u_{2k})>\d_{x_2}(u_{11})$ or $\d_{x_2}(u_{2k})<\d_{x_2}(u_{1\ell_1})$. Since $c_2>c_1$ and $\d_{1}(u_{1\ell_1})\ge \d_{1}(u_{11})$, both $\d_{1}(u_{2k})>\d_{1}(u_{1\ell_1})$ and $\d_{2}(u_{2k})>\d_{2}(u_{11})$ cannot occur simultaneously  for any $k\in [\ell_2]$. Otherwise, $\deg(u_{2k})=\d_{1}(u_{2k})+\d_{2}(u_{2k})>
\d_{1}(u_{1\ell_1})+\d_{2}(u_{11})\ge\d_{1}(u_{11})+\d_{2}(u_{11})=d-c_1$,  a contradiction.  Thus there are  three cases:

(i) If  $\d_{1}(u_{2k})<\d_{1}(u_{11})$ and $\d_{2}(u_{2k})<\d_{2}(u_{1\ell_1})$ for  some $k\in[\ell_2]$, then
we set $k_1=\min\{k\mid \d_{1}(u_{2k})<\d_{1}(u_{11})\text{\  and\ } \d_{2}(u_{2k})<\d_{2}(u_{1\ell_1})\}$ and $k_2=\max\{k\mid \d_{1}(u_{2k})<\d_{1}(u_{11})\text{ and\ } \d_{2}(u_{2k})<\d_{2}(u_{1\ell_1})\}$. Choose
 $J=(u_{2k_1},u_{2(k_1+1)},\ldots,u_{2k_2})x_3^{c_2}$, $K=J+(u)$, where $u=u_{11}x_3^{c_1}$,
then $J\cap (u)=(u_{11}x_3^{c_2})$ and  $K^{\calP}=J^{\calP}+(u)^{\calP}$  is
Betti splitting by  Lemma \ref{spliting}. It follows that
\begin{align*}
\reg(K)&=\reg(K^{\calP})=\max\{\reg(J^{\calP}),\reg((u)^{\calP}),\reg(K^{\calP}\cap (u)^{\calP})-1\}\\
&=\max\{\reg(J),\reg((u)),\reg(J\cap(u))-1\} \\
&\geq \reg(J\cap(u))-1=\d_{1}(u_{11})+\d_{2}(u_{11})+c_2-1\\
&\geq (\d_{1}(u_{2k})+1)+(\d_{2}(u_{2k})+1)+c_2-1=d+1.
 \end{align*}
By Claim \ref{claim:4},  we get that
$\reg(I)\geq\reg(K)\geq d+1$, a contradiction.

If (i) does not occur, there are the following two remaining case:

(ii) If $\d_{1}(u_{2k})<\d_{1}(u_{11})$ and $\d_{2}(u_{2k})>\d_{2}(u_{11})$ for some $k\in[\ell_2]$, then
we set $p=\max\{k\mid  \d_{1}(u_{2k})<\d_{1}(u_{11})$ and $\d_{2}(u_{2k})>\d_{2}(u_{11})\}$, $L=(u_{21},u_{22},\ldots,u_{2p})x_3^{c_2}$ and
 $M=L+(v)$, where  $v=u_{11}x_3^{c_1}$. Thus  $L\cap (v)=(x_1^{\d_{1}(u_{11})}x_2^{\d_{2}(u_{2p})}x_3^{c_2})$ and $M^{\calP}=L^{\calP}+(v)^{\calP}$ is Betti splitting.
This implies that $\reg(L\cap (v))=\d_{1}(u_{11})+\d_{2}(u_{2p})+c_2\geq \d_{1}(u_{11})+(\d_{2}(u_{11})+1)+(c_1+1)=d+2$. It follows from Lemma \ref{spliting} that
\[
\reg(M)=\max\{\reg(L),\reg((v)),\reg(L\cap(v))-1\}\geq d+1.
\]
By Claim \ref{claim:4}, we get that
$\reg(I)\geq\reg(M)\geq d+1$, a contradiction.

(iii)   Similarly to the proof of (ii), we can show that the case  $\d_{1}(u_{2k})>\d_{1}(u_{1\ell_1})$ and $\d_{2}(u_{2k})<\d_{2}(u_{1\ell_1})$  can not happen for any $k\in[\ell_2]$.
\end{proof}

\begin{Lemma}\label{e2}
Let  $I=I_1+I_2$  be an equigenerated monomial ideal  as in Setting \ref{setting1}. If $\reg(I)=d$, then $c_2-c_1=1$.
\end{Lemma}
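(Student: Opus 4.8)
The plan is to argue by contradiction: assume $\reg(I)=d$ but $c_2-c_1\ge 2$, and exhibit a subideal of $I$ whose polarization is an \emph{induced} subhypergraph of $H$ of regularity at least $d+1$; Lemmas \ref{polar} and \ref{hyper} will then give $\reg(I)\ge d+1$, a contradiction. The input is Lemma \ref{e1.5}, which supplies indices with $\deg_{x_i}(u_{1j})=\deg_{x_i}(u_{2k})$ for some $i\in[2]$. Since Setting \ref{setting1} is symmetric under interchanging $x_1$ and $x_2$, I may assume $i=1$, say $\deg_{x_1}(u_{1j})=\deg_{x_1}(u_{2k})=a$. Writing $g_1=u_{1j}x_3^{c_1}=x_1^a x_2^{d-c_1-a}x_3^{c_1}$ and $g_2=u_{2k}x_3^{c_2}=x_1^a x_2^{d-c_2-a}x_3^{c_2}$, these two generators of $I$ share the same $x_1$-exponent $a$, and $g_1$ sits on the bottom level while $g_2$ sits on the top level.

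First I would identify the relevant induced subhypergraph. Let $Y$ be the union of the supports of $\calP(g_1)$ and $\calP(g_2)$ in $S^{\calP}$. A direct bookkeeping of exponents shows that the only generator of $I_1$ whose polarization lies in $Y$ is $g_1$ itself (the constraint $\deg_{x_1}\le a$ together with $\deg_{x_2}\le d-c_1-a$ pins the bottom level, which lies on the line $\deg_{x_1}+\deg_{x_2}=d-c_1$, to the single point $a$), while the generators of $I_2$ lying in $Y$ are exactly those $u_{2k'}x_3^{c_2}$ whose $x_1$-exponent $q$ satisfies $a-(c_2-c_1)\le q\le a$. Hence the induced subhypergraph $H[Y]$ has edge ideal $\mathcal{K}=(g_1)+J$, where $J$ is the ideal generated by this run of top-level generators, and $\reg(I)\ge\reg(\mathcal{K})$. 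Next I would compute $J\cap(g_1)$: since every generator $u_{2k'}x_3^{c_2}$ of $J$ has $x_1$-exponent $\le a$, $x_3$-exponent $c_2$, and (using $q\ge a-(c_2-c_1)$) $x_2$-exponent $\le d-c_1-a$, each $\lcm(g_1,u_{2k'}x_3^{c_2})$ equals the single monomial $w=x_1^a x_2^{d-c_1-a}x_3^{c_2}$. Therefore $J\cap(g_1)=(w)$ is principal with $\reg(J\cap(g_1))=\deg(w)=d+(c_2-c_1)$.

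Finally I would feed this into the Mayer--Vietoris sequence
\[
0\longrightarrow J\cap(g_1)\longrightarrow J\oplus(g_1)\longrightarrow\mathcal{K}\longrightarrow 0
\]
and apply Lemma \ref{exact}(1). Here $\reg(g_1)=d$, and, writing $J=x_3^{c_2}J'$ with $J'\subset\KK[x_1,x_2]$ equigenerated whose generator $x_1$-exponents lie in an interval of length $c_2-c_1$, a two-variable staircase estimate in the spirit of Lemma \ref{e1} gives $\reg(J')\le(d-c_2)+(c_2-c_1)-1$, so by Lemma \ref{sum1} $\reg(J)\le d+(c_2-c_1)-1$. Thus $\reg(J\oplus(g_1))=\max\{\reg(J),\reg(g_1)\}\le d+(c_2-c_1)-1<d+(c_2-c_1)=\reg(J\cap(g_1))$. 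Lemma \ref{exact}(1) then forces $\reg(\mathcal{K})=\reg(J\cap(g_1))-1=d+(c_2-c_1)-1$ (otherwise its equality clause would yield $\reg(J\oplus(g_1))=\reg(J\cap(g_1))$, which is impossible). As $c_2-c_1\ge 2$, this gives $\reg(I)\ge\reg(\mathcal{K})\ge d+1$, contradicting $\reg(I)=d$; hence $c_2-c_1=1$.

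I expect the main obstacle to be the control of $\reg(J)$. The captured run of top-level generators need not have a linear resolution if $I_2$ has gaps in its $x_1$-exponents, so the estimate $\reg(J)\le d+(c_2-c_1)-1$ has to come from the explicit two-variable resolution (its regularity being the generating degree plus the maximal consecutive gap, minus one) rather than from linear quotients. An alternative is to first prove that $\reg(I)=d$ forces $I_2$ to be gap-free, so that $J$ has linear quotients and $\mathcal{K}=(g_1)+J$ is a Betti splitting by Lemma \ref{spliting}; but verifying gap-freeness of the \emph{top} level $I_2$ needs its own induced-subhypergraph argument, and the case $c_2-c_1=2$ (where a bottom-level generator may intrude into the relevant vertex set) requires separate care. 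The remaining point---that $H[Y]$ is genuinely induced with the claimed edge set---is routine exponent bookkeeping.
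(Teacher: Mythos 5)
Your proposal is correct, and it takes a genuinely different route from the paper's proof. After the same input (Lemma \ref{e1.5} plus the $x_1\leftrightarrow x_2$ symmetry), the paper proceeds by cases --- whether some $u_{2t}$ has $\deg_{x_2}(u_{2t})=\deg_{x_2}(u_{1j})$, and, if so, whether the run from $u_{2s}$ to $u_{2k}$ has length one, is gap-free, or contains a gap --- and uses a different tool in each case (Betti splittings via Lemma \ref{spliting}, an explicit computation for $((x_2x_3)^{c_2-c_1},(x_1x_3)^{c_2-c_1},(x_1x_2)^{c_2-c_1})$, Lemma \ref{regular}, and a two-generator estimate). You replace all of this with a single construction: the induced subhypergraph on the union of the supports of $\calP(g_1)$ and $\calP(g_2)$, whose edge ideal you correctly identify as $(g_1)+J$, where $J$ is the window of top-level generators with $x_1$-exponent in $[a-(c_2-c_1),a]$; your computations that $J\cap(g_1)$ is principal of degree $d+(c_2-c_1)$ and that $\reg(J)\le d+(c_2-c_1)-1$ are both right, and the Mayer--Vietoris argument with Lemma \ref{exact} then yields $\reg(I)\ge\reg((g_1)+J)=d+(c_2-c_1)-1\ge d+1$, the desired contradiction. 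What your route buys: it is uniform (no case analysis), and it avoids Betti splittings, hence also the linear-resolution hypothesis of Lemma \ref{spliting}, which the paper's case (i) does not explicitly verify for its chosen run. What it costs: the two-variable staircase bound you flagged --- for an equigenerated $J'\subset\KK[x_1,x_2]$ of degree $e$ with $x_1$-exponents $p_1<\cdots<p_r$ and $r\ge 2$, one has $\reg(J')\le e+\max_i(p_{i+1}-p_i)-1$ --- is not among the paper's lemmas. It is classical, and in any case the inequality you need follows from the paper's own toolkit by induction on $r$: the intersection of the ideal generated by the first $r-1$ generators with $(x_1^{p_r}x_2^{e-p_r})$ is the principal ideal $(x_1^{p_r}x_2^{e-p_{r-1}})$ of degree $e+(p_r-p_{r-1})$, so Lemma \ref{exact}(2) applied to the corresponding Mayer--Vietoris sequence bounds $\reg(J')$ by the maximum of $e$, of $e+(p_r-p_{r-1})-1$, and of the regularity of the ideal on the first $r-1$ generators, and induction finishes. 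So the one ingredient you identified as the main obstacle is genuine but easily supplied, and with it your proof is complete.
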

\begin{proof}
  By Lemma \ref{e1.5}, $\d_{i}(u_{1j})=\d_{i}(u_{2k})$ for some $i\in [2]$, $j\in [\ell_1]$ and $k\in [\ell_2]$. By symmetry, we can assume that
  $i=1$, that is $\d_{1}(u_{1j})=\d_{1}(u_{2k})$, where $j\in[\ell_1]$ and $k\in[\ell_2]$.

 By contradiction, assume that $c_2\geq c_1+2$, then $\d_{2}(u_{2k})<\d_{2}(u_{1j})$.
 For the monomial $u_{1j}$, there are two cases:

  (i) If $\d_{2}(u_{2t})\neq\d_{2}(u_{1j})$ for all $t\in[\ell_2]$, then we set $t_1=\min\{t\mid \d_{2}(u_{2t})<\d_{2}(u_{1j})\}$, $J=(u_{2t_1},u_{2(t_1+1)},\ldots,u_{2k})x_3^{c_2}$
  and $K=J+(u)$, where $u=u_{1j}x_3^{c_{1}}$.
  Thus $J\cap (u)=(u_{1j}x_3^{c_{2}})$ and $\reg(J\cap(u))=\d_{1}(u_{1j})+\d_{2}(u_{1j})+c_{2}\geq \d_{1}(u_{1j})+\d_{2}(u_{1j})+c_{1}+2=d+2$.
Since  $K^{\calP}=J^{\calP}+(u)^{\calP}$  is Betti splitting, it follows from Lemma
\ref{spliting} that
\begin{align*}
\reg(K)&=\reg(K^{\calP})=\max\{\reg(J^{\calP}),\reg((u)^{\calP}),\reg(J^{\calP}\cap (u)^{\calP})-1\}\\
&=\max\{\reg(J),\reg((u)),\reg(J\cap(u))-1\} \\
&\geq d+1.
  \end{align*}
By Claim \ref{claim:4}, we get that
$\reg(I)\geq\reg(K)\geq d+1$, a contradiction.

(ii) If $\d_{2}(u_{2s})=\d_{2}(u_{1j})$ for some $s\in[\ell_2]$, then we need to consider the following three subcases:

(a) If $k-s=1$, then we set $L=(u_{2(k-1)}x_3^{c_2},u_{2k}x_3^{c_2},u_{1j}x_3^{c_1})$.
 Thus
 \[
 L=(1^{\d_{1}(u_{2(k-1)})}x_2^{\d_{2}(u_{2k})}x_3^{c_1})N,
 \]
 where $N=((x_2x_3)^{c_2-c_1},$ $(x_1x_3)^{c_2-c_1},(x_1x_2)^{c_2-c_1})$.
Since  $(N:x_3^{c_2-c_1})=(x_1^{c_2-c_1},x_2^{c_2-c_1})$ and $N+(x_3^{c_2-c_1})=((x_1x_2)^{c_2-c_1},x_3^{c_2-c_1})$, we get that $\reg((N:x_3^{c_2-c_1})(c_1-c_2))=2(c_2-c_1)-1+(c_2-c_1)=3(c_2-c_1)-1$ and $\reg(N+(x_3^{c_2-c_1}))=3(c_2-c_1)-1$. By Lemma \ref{exact}(1), we get that $\reg(N)=3(c_2-c_1)-1$. Thus $\reg(L)=(\d_{1}(u_{2(k-1)})+\d_{2}(u_{2k})+c_1)+\reg(N)=[(\d_{1}(u_{2k})+c_1-c_2)+\d_{2}(u_{2k})+c_1)]+(3(c_2-c_1)-1)=(\d_{1}(u_{2k})+\d_{2}(u_{2k})+c_2)+c_2-c_1-1\geq d+1$,
where the last inequality holds because $c_2\geq c_1+2$.
 By Claim \ref{claim:4}, 
$\reg(I)\ge \reg(L)\geq d+1$, a contradiction.

 (b)  If $k-s=\d_{1}(u_{2k})-\d_{1}(u_{2s})$, then $\d_{1}(u_{2(t+1)})-\d_{1}(u_{2t})=1$ for all $t\in \{s,\ldots, k-1\}$.
 Let $M=(u_{2s},u_{2(s+1)},\ldots,u_{2k})x_3^{c_2}$ and $N=M+(v)$, where $v=u_{1j}x_3^{c_1}$. Thus
 \[
 M=x_1^{\d_{1}(u_{2s})}x_2^{\d_{2}(u_{2k})}x_3^{c_2}\sum\limits_{t=0}^{k-s}(x_1^tx_2^{k-s-t})=x_1^{\d_{1}(u_{2s})}x_2^{\d_{2}(u_{2k})}x_3^{c_2}(x_1,x_2)^{k-s},
  \]
and $M\cap(v)=(u_{1j}x_3^{c_{2}})$. It follows from  Lemma \ref{regular} that
$\reg(M)=(\d_{1}(u_{2s})+\d_{2}(u_{2k})+c_{2})+(k-s)=\d_{1}(u_{2k})+\d_{2}(u_{2k})+c_{2}=d$ and $\reg(M\cap(v))=\d_{1}(u_{1j})+\d_{2}(u_{1j})+c_{2}\geq \d_{1}(u_{1j})+\d_{2}(u_{1j})+c_{1}+2=d+2$.
Since  $N^{\calP}=M^{\calP}+(v)^{\calP}$ is Betti splitting, similarly to the proof of (i), we can get that $\reg(I)\geq\reg(N)\geq d+1$, a contradiction.

 (c) If $1<k-s<\d_{1}(u_{2k})-\d_{1}(u_{2s})$, then there exists some  $t\in \{s,s+1,\ldots, k-1\}$ such that $\d_{1}(u_{2(t+1)})-\d_{1}(u_{2t})\geq 2$. Otherwise, $\d_{1}(u_{2k})-\d_{1}(u_{2s})=\sum\limits_{t=s}^{k-1}(\d_{1}(u_{2(t+1)})-\d_{1}(u_{2t}))=(k-1)-(s-1)=k-s$,  a contradiction.  Let $Q=(u_{2t},u_{2(t+1)})x_3^{c_2}$,  then $\reg(Q)=\d_{1}(u_{2(t+1)})+\d_{2}(u_{2t})+c_{2}-1\geq (\d_{1}(u_{2t})+2)+\d_{2}(u_{2t})+c_{2}-1=d+1$.  Similarly to the proof of (a), we can get $\reg(I)\ge \reg(Q)\geq d+1$, a contradiction.

 We complete the proof.
\end{proof}

For  an equigenerated monomial ideal  $I=I_1+I_2$   as in Setting \ref{setting1}, we will use the following conditions  to characterize when $\reg(I)=d$.
\begin{Definition}\label{condition1}
Let $I=I_1+I_2$ be an equigenerated monomial ideal as in Setting \ref{setting1}. We say that  $I$ satisfies  the conditions $(*)$ if  the following conditions hold

$(1^*)$  $c_2=c_1+1$;

$(2^*)$ $\reg(I_1)=d$;

$(3^*)$ there exist some  $i\in [2]$ such that $\d_{i}(u_{1j_i})=\d_{i}(u_{2k_i})$ for some $j_i\in [\ell_1]$ and $k_i\in [\ell_2]$;

$(4^*)$ if  $\ell_2\ge 2$ and  $\d_{1}(u_{2(j+1)})-\d_{1}(u_{2j})\geq 2$ for some  $j\in[\ell_2-1]$, then  $\d_{1}(u_{1m})=\d_{1}(u_{2(j+1)})$ and $\d_{2}(u_{1n})=\d_{2}(u_{2j})$  for some  $m,n\in[\ell_1]$.
\end{Definition}

Now we are ready to prove two main results of this section.
\begin{Lemma}\label{e4.1}
Suppose that $I=I_1+I_2$ is an equigenerated monomial ideal as in Setting \ref{setting1} satisfying condition $(*)$ of Definition \ref{condition1}.
Then $\reg(I)=d$.
\end{Lemma}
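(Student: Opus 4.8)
The plan is to reduce to $c_1=0,\ c_2=1$ and then show directly that $I$ has linear quotients; since $I$ is generated in the single degree $d$ we always have $\reg(I)\ge d$, so exhibiting linear quotients yields a linear resolution and hence $\reg(I)=d$. Using $(1^*)$, every generator is divisible by $x_3^{c_1}$ and $c_2=c_1+1$, so $I=x_3^{c_1}(J+x_3K)$ with $J=(u_{11},\dots,u_{1\ell_1})$ and $K=(u_{21},\dots,u_{2\ell_2})$ ideals of $\KK[x_1,x_2]$; by Lemma \ref{sum1} it suffices to treat $J+x_3K$, so I assume $c_1=0,\ c_2=1$. Condition $(2^*)$ together with Lemma \ref{e1} then gives that the exponents $a_j:=\deg_{x_1}(u_{1j})$ are consecutive integers filling an interval $[\alpha,\beta]$, and I write $b_k:=\deg_{x_1}(u_{2k})$ with $b_1<\cdots<b_{\ell_2}$.

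Before ordering, I would extract the combinatorial content of $(3^*)$ and $(4^*)$. Condition $(4^*)$ says that every gap of the sequence $(b_k)$ (an index with $b_{k+1}-b_k\ge 2$) lies inside $[\alpha,\beta]$: both $b_k+1$ and $b_{k+1}$ belong to $[\alpha,\beta]$. Condition $(3^*)$ supplies a single contact $b_{k_0}\in[\alpha-1,\beta]$ (an $x_1$-contact gives $b_{k_0}\in[\alpha,\beta]$; an $x_2$-contact gives $a_{j_0}=b_{k_0}+1$, i.e. $b_{k_0}\in[\alpha-1,\beta-1]$). From gap-confinement and this single contact I would deduce three facts: $b_1\le\beta$; the $b_k$ lying below $\alpha$ (if any) form a consecutive block ending at $\alpha-1$; and the $b_k$ lying above $\beta$ (if any) form a consecutive block beginning at $\beta+1$. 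Each is a one-line consequence of the fact that a jump by $\ge 2$ is allowed only strictly inside $[\alpha,\beta]$.

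I would then order $\mathcal{G}(I)$ by increasing $x_1$-degree, breaking a tie between $u_{1j}$ and $x_3u_{2k}$ (which forces $a_j=b_k$) by putting $u_{1j}$ first, and check that every colon against predecessors is generated by variables. Since each predecessor has $x_1$-degree at most that of the current generator, no colon ever involves $x_1$; the only colons that occur are $u_{1j'}:u_{1j}=x_2^{a_j-a_{j'}}$, $x_3u_{2k'}:x_3u_{2k}=x_2^{b_k-b_{k'}}$, $u_{1j}:x_3u_{2k}=x_2^{b_k-a_j+1}$ (a predecessor forces $a_j\le b_k$), and $x_3u_{2k}:u_{1j}=x_2^{a_j-1-b_k}x_3$ (a predecessor forces $b_k<a_j$). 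Hence for $g=x_3u_{2k}$ the colon is a pure power of $x_2$, and it equals $(x_2)$ because either $b_k\in[\alpha,\beta]$ and the tie-generator with $a_j=b_k$ contributes $x_2$, or $b_k\notin[\alpha,\beta]$ and the consecutive-block facts provide a predecessor $K$-generator at $b_k-1$ contributing $x_2$. For $g=u_{1j}$ the colon is generated by powers of $x_2$ and terms of the form $x_2^{m}x_3$ with $m\ge 0$: if $a_j>\alpha$ then $u_{1(j-1)}$ contributes $x_2$ and the colon is $(x_2)$ or $(x_2,x_3)$, while if $a_j=\alpha$ the block fact places the nearest $K$-predecessor at $\alpha-1$, contributing $x_3$, so the colon is $(x_3)$. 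This establishes linear quotients, hence a linear resolution, so $\reg(I)=d$.

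The step I expect to be the real obstacle is precisely this boundary bookkeeping: a generator $x_3u_{2k}$ with $b_k$ outside $[\alpha,\beta]$, or the extreme generator $u_{1j}$ with $a_j=\alpha$, produces a colon whose apparent minimal generator is $x_2^{m}$ or $x_2^{m}x_3$ with $m\ge 2$, and it is only the exact location of the nearest surviving $K$-generator — forced by combining $(3^*)$ with the gap-confinement of $(4^*)$ — that brings $m$ down to $1$. The heart of the proof is thus the verification that $(3^*)$ and $(4^*)$ jointly exclude every configuration in which some predecessor would leave a non-variable minimal generator, rather than any single colon computation.
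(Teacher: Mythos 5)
Your proof is correct, and it takes a genuinely different route from the paper's, even though both arguments ultimately exhibit linear quotients and invoke \cite[Lemma 4.1]{CH}. The paper keeps $c_1,c_2$ general and uses a \emph{block} ordering: all of $\mathcal{G}(I_1)$ first (by increasing $x_1$-degree), then the generators of $I_2$ entered at the contact index $k_1$ supplied by $(3^*)$, sweeping right to $\ell_2$ and then left to $1$; when the $x_1$-degrees of $I_2$ have jumps $\ge 2$ it runs an induction on the number $s$ of such jumps, using $(4^*)$ once per jump, and its colon ideals come out as $(x_1)$, $(x_2)$ or $(x_1,x_2)$. You instead factor out $x_3^{c_1}$ via Lemma \ref{sum1}, merge the two blocks into a single ordering by $x_1$-degree (ties broken in favor of $I_1$), and replace the induction entirely by three structural facts about the sequence $(b_k)$ — jumps confined to $[\alpha,\beta]$, and the sub-$\alpha$ and super-$\beta$ parts forming consecutive blocks ending at $\alpha-1$ and starting at $\beta+1$ — which I checked do follow from $(3^*)$ and $(4^*)$ exactly as you assert (the all-$b_k<\alpha$ case needs the contact point, the rest only gap-confinement). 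Your colon ideals are then $(x_2)$, $(x_3)$ or $(x_2,x_3)$, and the computations $u_{1j'}:u_{1j}=x_2^{a_j-a_{j'}}$, $u_{1j}:x_3u_{2k}=x_2^{b_k-a_j+1}$, $x_3u_{2k}:u_{1j}=x_2^{a_j-1-b_k}x_3$ are all accurate. The only point stated slightly loosely is the case $b_1<\alpha$: there is then no $K$-predecessor at $b_1-1$, but none is needed since $x_3u_{21}$ is the first generator in your ordering, so the condition is vacuous. What each approach buys: the paper's ordering makes the role of the contact point $(3^*)$ transparent and localizes $(4^*)$ to one gap at a time at the cost of an induction and a three-way case split; your merged ordering gives a uniform, induction-free verification in which all the hypotheses are consumed up front by the three boundary facts, arguably isolating more cleanly why $(3^*)$ and $(4^*)$ are exactly what is needed.
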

\begin{proof} For convenience, we can assume that $v_{1i}=u_{1i}x_3^{c_1}$ for each $i\in [\ell_1]$ and $v_{2i}=u_{2i}x_3^{c_2}$ for each $i\in [\ell_2]$.
By symmetry and the condition $(3^*)$ in Definition \ref{condition1}, we can also assume that
$\d_{1}(u_{1j_1})=\d_{1}(u_{2k_1})$ for some $j_1\in [\ell_1]$, $k_1\in [\ell_2]$. Thus
$v_{11},\ldots, v_{1\ell_1},v_{21},\ldots, v_{2\ell_2}$ is all of  minimal generators  of $I$ such that
$((v_{11},\ldots, v_{1i}):v_{1(i+1)})=(x_2)$ for all $i\in [\ell_1-1]$(if $\ell_1\ge 2$), and
\[
((v_{11},\ldots, v_{1\ell_1}):v_{2k_1})= \begin{cases}
					(x_2),& \text{if\ } \d_{2}(v)\geq\d_{2}(v_{2k_1})+1\ \text{for all\ }v\in\mathcal{G}(I_1),\\
(x_1,x_2),& \text{otherwise,}
				\end{cases}
 \]
since $c_2=c_1+1$. Therefore,  $I$ has linear quotients if  $\ell_2=1$, so $\reg(I)=d$.

In the following, we assume that  $\ell_2\ge 2$.
We divide into two cases:

(I) If  $\d_{1}(v_{2(i+1)})-\d_{1}(v_{2i})=1$ for every $i\in [\ell_2-1]$, then there are three subcases:

(a) If  $k_1=1$, then, for each $i\in[\ell_2-1]$, we have
\begin{align*}
&((v_{11},\ldots, v_{1\ell_1},v_{21}, v_{22},\ldots, v_{2i}):v_{2(i+1)})\\
=&\begin{cases}
					(x_2),&  \text{if\ } \d_{2}(v)\geq\d_{2}(v_{2(i+1)})+1\ \text{for all\ }v\in\mathcal{G}(I_1),\\
					(x_1,x_2),& \text{otherwise.}
				\end{cases}
\end{align*}

 (b) If $k_1=\ell_2$, then, for each $i\in[\ell_2-1]$, we have
\begin{align*}
&((v_{11},\ldots, v_{1\ell_1},v_{2\ell_2},v_{2(\ell_2-1)},  \ldots, v_{2(\ell_2-i+1)}):v_{2(\ell_2-i)})\\
&=\begin{cases}
					(x_1), & \text{if\ } \d_{1}(v)\geq\d_{1}(v_{2(\ell_2-i)})+1\ \text{for all\ }v\in\mathcal{G}(I_1),\\
					(x_1,x_2),& \text{otherwise.}
				\end{cases}
\end{align*}

(c) If $1< k_1<\ell_2$, then, for each $k_1\le i\le \ell_2-1$, we have
\begin{align*}
&((v_{11},\ldots, v_{1\ell_1},v_{2k_1}, v_{2(k_1+1)},\ldots, v_{2i}):v_{2(i+1)})\\
&=\begin{cases}
	(x_2), & \text{if\ } \d_{2}(v)\geq\d_{2}(v_{2(i+1)})+1\ \text{for all\ }v\in\mathcal{G}(I_1),\\
	(x_1,x_2), & \text{otherwise,}
\end{cases}
\end{align*}
and  for each $i\in[k_1-1]$, we also have
\begin{align*}
&((v_{11},\ldots, v_{1\ell_1},v_{2k_1}, v_{2(k_1+1)}, \ldots, v_{2\ell_2},v_{2(k_1-1)},v_{2(k_1-2)},\ldots,v_{2(k_1-i+1)}):v_{2(k_1-i)})\\
&=\begin{cases}
	(x_1), & \text{if\ } \d_{1}(v)\geq\d_{1}(v_{2(k_1-i)})+1\ \text{for all\ }v\in\mathcal{G}(I_1),\\
	(x_1,x_2), & \text{otherwise.}
\end{cases}
\end{align*}
In either of the three cases above, we get that $I$  has linear quotients, so $\reg(I)=d$.

(II) If $\d_{1}(v_{2(i+1)})-\d_{1}(v_{2i})\ge 2$ for some  $i\in[\ell_2-1]$. Let
\[
s=|\{i\mid \d_{1}(v_{2(i+1)})-\d_{1}(v_{2i})\ge 2\}|.
\]
 Using induction on $s$, we will show  that $I$ has linear quotients, which implies that $\reg(I)=d$.

 If $s=1$, then $\d_{1}(v_{2(p+1)})-\d_{1}(v_{2p})\ge 2$ for  a unique  $p\in [\ell_2-1]$ and $\d_{1}(v_{2(i+1)})-\d_{1}(v_{2i})=1$ for all other $i\in[\ell_2-1]$. According to the condition $(4^*)$ in Definition \ref{condition1}, there exists some $q,r\in [\ell_1]$ such that  $\d_{1}(u_{1q})=\d_{1}(u_{2(p+1)})$ and $\d_{2}(u_{1r})=\d_{2}(u_{2p})$. From the proof of the case (I) above,
 we know that the ideal $(v_{11},\ldots, v_{1\ell_1}, v_{2(p+1)}, v_{2(p+2)},\ldots, v_{2\ell_2})$ has linear quotients.
  If  $p=1$, then
\begin{align*}
&((v_{11},\ldots, v_{1\ell_1},v_{22}, v_{23},\ldots, v_{2\ell_2}):v_{21})\\
=&\begin{cases}
					(x_1),& \text{if\ } \d_{1}(v)\geq\d_{1}(v_{2p})+1\ \text{for all\ }v\in\mathcal{G}(I_1),\\
(x_1,x_2),& \text{otherwise.}
				\end{cases}
\end{align*}
If $p\ge 2$, then
\begin{align*}
&((v_{11},\ldots, v_{1\ell_1},v_{2(p+1)}, v_{2(p+2)},\ldots, v_{2\ell_2}):v_{2p})\\
=& \begin{cases}
					(x_1),& \text{if\ } \d_{1}(v)\geq\d_{1}(v_{2p})+1\ \text{for all\ }v\in\mathcal{G}(I_1),\\
(x_1,x_2),& \text{otherwise.}
				\end{cases}
\end{align*}
and for each $i\in[p-1]$, we  have
\begin{align*}
&((v_{11},\ldots, v_{1\ell_1},v_{2(p+1)}, v_{2(p+2)},\ldots, v_{2\ell_2},v_{2p},v_{2(p-1)},\ldots, v_{2(p-i+1)}):v_{2(p-i)})\\
=& \begin{cases}
					(x_1),& \text{if\ } \d_{1}(v)\geq\d_{1}(v_{2(p-i)})+1\ \text{for all\ }v\in\mathcal{G}(I_1),\\
(x_1,x_2),& \text{otherwise.}
				\end{cases}
\end{align*}
Therefore,  $I$ has linear quotients.

If $s\geq 2$, then by the definition of $s$, there exist $1\le q_{1}<q_{2}<\cdots<q_{s}\le \ell_1$, $1\le r_{1}<r_{2}<\cdots<r_{s}\le \ell_1$
 such that $\d_{1}(u_{1q_{\ell}})=\d_{1}(u_{2(p_{\ell}+1)})$ and $\d_{2}(u_{1r_{\ell}})=\d_{2}(u_{2p_{\ell}})$.
By induction,  the ideal $(v_{11},\ldots, v_{1\ell_1}, v_{2(p_s+1)},\ldots,v_{2\ell_2}$, $v_{2p_s},v_{2(p_s-1)},\ldots, v_{2(p_1+1)})$ has linear quotients.
If  $p_1=1$, then
\begin{align*}
&((v_{11},\ldots, v_{1\ell_1},v_{2(p_s+1)},v_{2(p_s+2)},\ldots, v_{2\ell_2},v_{2p_s},v_{2(p_s-1)},v_{2(p_s-2)},\ldots, v_{22}):v_{21})\\
=& \begin{cases}
					(x_1),& \text{if\ } \d_{1}(v)\geq\d_{1}(v_{2p_1})+1\ \text{for all\ }v\in\mathcal{G}(I_1),\\
(x_1,x_2),& \text{otherwise.}
				\end{cases}
\end{align*}
 If $p_1\ge 2$, then for each $i\in[p_1-1]$, we have
\begin{align*}
&((v_{11},\ldots, v_{1\ell_1},v_{2(p_s+1)},\ldots, v_{2\ell_2},v_{2p_s},v_{2(p_s-1)},v_{2(p_s-2)},\ldots,v_{2(p_1-i+1)}):v_{2(p_1-i)})\\
=& \begin{cases}
					(x_1),& \text{if\ } \d_{1}(v)\geq\d_{1}(v_{2(p_1-i)})+1\ \text{for all\ }v\in\mathcal{G}(I_1),\\
(x_1,x_2),& \text{otherwise.}
				\end{cases}
\end{align*}
Therefore, $I$ also has linear quotients.  We complete the proof.
\end{proof}

\begin{Lemma}\label{e4}
Let $I=I_1+I_2$ be an equigenerated monomial ideal, as defined in Setting \ref{setting1}.  If  $\reg(I)=d$, then $I$ satisfies the conditions $(*)$ in Definition \ref{condition1}.
\end{Lemma}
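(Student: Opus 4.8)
The plan is to notice that three of the four conditions come essentially for free and to spend all the effort on $(4^*)$. Condition $(1^*)$ is exactly Lemma \ref{e2}, and $(3^*)$ is exactly Lemma \ref{e1.5}. For $(2^*)$ I would reuse the opening paragraph of the proof of Lemma \ref{e1.5}: since $I_1$ is generated in degree $d$ we have $\reg(I_1)\ge d$, while $I_1^{\calP}$ is the edge ideal of the induced subhypergraph of $H$ cut out by the box of $x_3$-height $c_1$ (no generator of $I_2$ survives there, each having $x_3$-degree $c_2>c_1$), so $\reg(I)\ge\reg(I_1)$ by Lemmas \ref{polar} and \ref{hyper}, forcing $\reg(I_1)=d$. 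In particular, by Remark \ref{re1} the $x_1$-degrees of $\mathcal{G}(I_1)$ fill an entire interval $[\alpha,\beta]$ with $\alpha=\deg_{x_1}(u_{11})$ and $\beta=\deg_{x_1}(u_{1\ell_1})$, a fact I will use repeatedly.

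It remains to prove $(4^*)$. Fix a gap index $p$, so $b-a\ge 2$ where $a=\deg_{x_1}(u_{2p})$ and $b=\deg_{x_1}(u_{2(p+1)})$. Using $c_2=c_1+1$ together with $\deg_{x_2}(u_{2p})=d-c_2-a$ and $\deg_{x_2}(u_{1n})=d-c_1-\deg_{x_1}(u_{1n})$, the two demands of $(4^*)$ become the assertions (A) $b\in[\alpha,\beta]$ and (B) $a+1\in[\alpha,\beta]$. Since $a+1<b$ and $[\alpha,\beta]$ is an interval, A and B together are implied by the two inequalities $\beta\ge b$ and $\alpha\le a+1$, so I reduce to proving these. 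Here the automorphism $\sigma$ interchanging $x_1$ and $x_2$ is the key organizing device: it fixes regularity, preserves the shape of Setting \ref{setting1} and the order $c_1<c_2$, and carries the gap of $I_2$ to a gap of $\sigma(I_2)$ whose upper end exceeds the top $x_1$-degree of $\sigma(I_1)$ exactly when $\alpha>a+1$. Thus $\sigma$ swaps the two inequalities, and it suffices to derive a contradiction from $\reg(I)=d$ together with $\beta<b$.

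So assume $\beta<b$, i.e. the generator $v_{2(p+1)}=u_{2(p+1)}x_3^{c_2}$ lies strictly to the right of all of $I_1$. I would then exhibit an induced subhypergraph of $H$ of regularity $\ge d+1$, splitting into three sub-cases according to the position of $\beta$ relative to the gap. If $\beta\le a$, take the box with corner $(b,\deg_{x_2}(u_{2p}),c_2)$; its only generators are $v_{2p},v_{2(p+1)}$, and Lemma \ref{sum1} (divide by the gcd and split the coprime factors) gives $\reg((v_{2p},v_{2(p+1)}))=\deg\lcm-1=d+(b-a)-1\ge d+1$. If $\beta\ge a+2$, take the box with corner $(b,d-c_1-\beta,c_2)$; one checks its only generators are $v_{2(p+1)}$ and $u_{1\ell_1}x_3^{c_1}$, with $\reg=\deg\lcm-1=d+b-\beta\ge d+1$. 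If $\beta=a+1$, take the box with corner $(b,d-c_2-a,c_2)$, whose generators are exactly $v_{2p},v_{2(p+1)},u_{1\ell_1}x_3^{c_1}$; factoring out the common monomial $x_1^{a}x_2^{d-c_2-b}x_3^{c_1}$ reduces this ideal to $N=(x_1^{m}x_3,x_2^{m}x_3,x_1x_2^{m})$ with $m=b-a\ge 2$, and a Betti splitting of $N$ along $x_2$ (Lemma \ref{spliting}, the part $x_2^{m}(x_1,x_3)$ having a linear resolution and the intersection being $(x_1^{m}x_2^{m}x_3)$) yields $\reg(N)=2m$, whence $\reg$ of the three-generator ideal equals $d-1+(b-a)\ge d+1$ by Lemma \ref{sum1}. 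In every sub-case Lemmas \ref{polar} and \ref{hyper} give $\reg(I)\ge d+1$, contradicting $\reg(I)=d$; hence $\beta\ge b$, and applying the same argument to $\sigma(I)$ yields $\alpha\le a+1$, so $(4^*)$ holds.

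The main obstacle is not the arithmetic but the bookkeeping that makes Lemmas \ref{polar} and \ref{hyper} applicable: in each sub-case I must verify that the chosen set of generators is precisely the set of all generators of $I$ lying in its box, so that the box really is the vertex set of an induced subhypergraph with exactly those edges. This is where the three standing hypotheses are all consumed at once, namely $c_2=c_1+1$ (to translate the $x_2$-cap into the $x_1$-condition $\deg_{x_1}\ge a+1$), $\beta<b$ (to keep every generator of $I_1$ out of the critical box), and the emptiness of the open gap $(a,b)$ in the $x_1$-degrees of $I_2$ (to keep every other generator of $I_2$ out). Confirming these exclusions in each sub-case, and checking that the symmetry $\sigma$ genuinely exchanges the two inequalities without altering the hypotheses, is the delicate part of the write-up.
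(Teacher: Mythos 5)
Your proposal is correct, and while it uses the same basic toolkit as the paper (restriction to ``boxes'' realized as induced subhypergraphs of the polarization via Lemmas \ref{polar} and \ref{hyper}, the regularity formulas of Lemma \ref{sum1}, and Betti splittings via Lemma \ref{spliting}), the organization of the proof of $(4^*)$ is genuinely different. Conditions $(1^*)$--$(3^*)$ you handle exactly as the paper does, citing Lemmas \ref{e2} and \ref{e1.5} together with the interval structure of the $x_1$-degrees of $\mathcal{G}(I_1)$ coming from Lemma \ref{e1}. For $(4^*)$, the paper splits into three cases according to which half of the conclusion fails; in its case (i) it must take the ideal $K=(u_{1n},\ldots,u_{1\ell_1})x_3^{c_1}+(u_{2j}x_3^{c_2})$ containing a whole run of generators of $I_1$, obtains $\reg(K)=d$ by invoking Lemma \ref{e4.1} (the sufficiency direction of the same equivalence), and then applies Lemma \ref{exact}(2) to a Mayer--Vietoris sequence. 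You instead rewrite both halves of $(4^*)$ as the pair of inequalities $\beta\ge b$ and $\alpha\le a+1$, check that the involution swapping $x_1$ and $x_2$ exchanges them (your verification is right: condition (A) for $\sigma(I)$ is condition (B) for $I$ and conversely, since the decomposition by $x_3$-degree and the order $c_1<c_2$ are preserved), and thereby reduce to refuting $\beta<b$; your three sub-cases $\beta\le a$, $\beta=a+1$, $a+2\le\beta<b$ are exhaustive, every witnessing ideal has at most three generators, the box-content verifications go through exactly as you describe (they consume $c_2=c_1+1$, $\beta<b$, and the emptiness of the gap), and the computation $\reg\bigl((x_1^mx_3,x_2^mx_3,x_1x_2^m)\bigr)=2m$ via the splitting $x_2^m(x_1,x_3)+(x_1^mx_3)$ with intersection $(x_1^mx_2^mx_3)$ is correct, giving $d-1+(b-a)\ge d+1$ after restoring the common factor. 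What your route buys: the necessity direction becomes logically independent of the sufficiency Lemma \ref{e4.1} (in the paper the two are intertwined, though not circularly), Lemma \ref{exact} is never needed, and the symmetry halves the case analysis, with the hardest configuration ($\beta=a+1$, the paper's case (i)) handled by a small explicit ideal of the same flavor as the one appearing in case (ii)(a) of the paper's Lemma \ref{e2}. The cost is the bookkeeping you flag yourself: in each sub-case one must confirm that the chosen generators are precisely all generators of $I$ lying in the box, a verification the paper's larger ideals require as well but which your finer case split makes the load-bearing step.
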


\begin{proof}
Conditions $(1^*)$, $(2^*)$ and $(3^*)$ follow from  Lemmas  \ref{e2}, \ref{e1},  and \ref{e1.5}, respectively. By Lemma \ref{e1} This implies that $\ell_1=1$, or $\ell_1\ge 2$ and $\d_{1}(u_{1(i+1)})=\d_{1}(u_{1i})+1$ for each $i\in[\ell_1-1]$.

Next, we will prove the condition $(4^*)$ by contradiction, dividing  the proof  into three cases:

(i) If $\d_{2}(u_{1n})=\d_{2}(u_{2j})$ for some $n\in[\ell_1]$, but  $\d_{1}(u_{1m})\neq \d_{1}(u_{2(j+1)})$ for any $m\in[\ell_1]$, then
we consider the ideal  $J=K+(v)$, where
\[
K=(u_{1n},u_{1(n+1)},\ldots,u_{1\ell_1})x_3^{c_1}+(u_{2j}x_3^{c_2})\ \text{\ and\ }\ v=u_{2(j+1)}x_3^{c_2}.
\]
 Thus $K\cap (v)=(x_1^{\d_{1}(u_{2(j+1)})}x_2^{\d_{2}(u_{1\ell_1})}x_3^{c_2})$, since $\d_{1}(u_{2j})+1=(d-\d_{2}(u_{2j})-c_2)+1=d-\d_{2}(u_{1n})-c_1=\d_{1}(u_{1n})<\d_{1}
 (u_{1(n+1)})<\cdots<\d_{1}(u_{1\ell_1})<\d_{1}(u_{2(j+1)})$ and $\d_{2}(u_{2j})=\d_{2}(u_{1n})>\d_{2}(u_{1(n+1)})>\cdots>\d_{2}(u_{1\ell_1})>\d_{2}(u_{2(j+1)})$, where $\d_{1}(u_{1\ell_1})<\d_{1}(u_{2(j+1)})$  is due to
  $\d_{1}(u_{2(j+1)})\ge \d_{1}(u_{2j})\\ +1$, $\d_{1}(u_{1(i+1)})=\d_{1}(u_{1i})+1$ for each $i\in[\ell_1-1]$ and $\d_{1}(u_{2(j+1)})\neq \d_{1}(u_{1i})$ for all $i\in[\ell_1]$, and
 $\d_{2}(u_{1\ell_1})>\d_{2}(u_{2(j+1)})$ is due to  $\d_{2}(u_{1\ell_1})=d-\d_{1}(u_{1\ell_1})-c_1>d-\d_{1}(u_{2(j+1)})-c_1=d-\d_{1}(u_{2(j+1)})-c_2+1=\d_{2}(u_{2(j+1)})+1$.
  It follows that  $\reg(K\cap (v))=\d_{1}(u_{2(j+1)})+\d_{2}(u_{1\ell_1})+c_2\geq (\d_{1}(u_{1\ell_1})+1)+\d_{2}(u_{1\ell_1})+(c_1+1)=d+2$, $\reg((v))=d$, and $\reg(K)=d$ by Lemma \ref{e4.1}. From Lemma $\ref{exact}(2)$
 the following exact sequence
	\begin{equation}
		0\longrightarrow \frac{S}{K \cap (v)}\longrightarrow \frac{S}{K} \oplus
\frac{S}{(v)}\longrightarrow \frac{S}{J} \longrightarrow 0,
		\label{eqn:SES-4}
	\end{equation}	
we can get $\reg(J)\geq d+1$.

By Claim \ref{claim:4}, we get that
$\reg(I)\ge \reg(J)\geq d+1$, a contradiction.

(ii) If $\d_{1}(u_{1m})=\d_{1}(u_{2(j+1)})$ for some $m\in[\ell_1]$,  but   $\d_{2}(u_{1n})\ne \d_{2}(u_{2j})$ for any $n\in[\ell_1]$, then we consider the ideal  $J=K+(v)$, where
\[
K=(u_{11},u_{12},\ldots,u_{1m})x_3^{c_1}+(u_{2(j+1)}x_3^{c_2})\ \text{\ and\ }\ v=u_{2j}x_3^{c_2}.
\]
 By similar arguments to the proof of (i), we  can deduce that $\reg(J)\geq d+1$ and $\reg(I)\ge \reg(J)\geq d+1$, a contradiction.

(iii) If  $\d_{1}(u_{1m})\neq \d_{1}(u_{2(j+1)})$  and $\d_{2}(u_{1n})\neq\d_{2}(u_{2j})$ for all $m,n\in[\ell_1]$, then, in particular,   $\d_{1}(u_{11})\neq\d_{1}(u_{2(j+1)})$ and $\d_{2}(u_{1\ell_1})\neq\d_{2}(u_{2j})$.
 So there are two subcases that need to be considered:

(a) If $\d_{1}(u_{11})>\d_{1}(u_{2(j+1)})$ or $\d_{2}(u_{1\ell_1})>\d_{2}(u_{2j})$, then we choose
\begin{align*}
M&=(u_{2j},u_{2(j+1)})x_3^{c_2}\\
&=(x_2^{\d_{2}(u_{2j})-\d_{2}(u_{2(j+1)})},x_1^{\d_{1}(u_{2(j+1)})-\d_{1}(u_{2j})})
x_1^{\d_{1}(u_{2j})}x_2^{\d_{2}(u_{2(j+1)})}x_3^{c_2}.
\end{align*}
 Thus $\reg(M)=(\d_{1}(u_{2j})+\d_{2}(u_{2(j+1)})+c_2)+(\d_{2}(u_{2j})-\d_{2}(u_{2(j+1)}))+
(\d_{1}(u_{2(j+1)})-\d_{1}(u_{2j}))-1=\d_{2}(u_{2j})+\d_{1}(u_{2(j+1)})+c_2-1\geq \d_{1}(u_{2j})+(\d_{1}(u_{2j})+2)+c_2-1=d+1$. By Claim \ref{claim:4}, we get that
$\reg(I)\ge \reg(M)\geq d+1$, a contradiction.

(b) If $\d_{1}(u_{11})<\d_{1}(u_{2(j+1)})$ and $\d_{2}(u_{1\ell_1})<\d_{2}(u_{2j})$, then $\d_{2}(u_{11})<\d_{2}(u_{2j})$. Otherwise,  $\d_{2}(u_{11})\geq \d_{2}(u_{2j})>\d_{2}(u_{1\ell_1})$, this forces $\d_{2}(u_{2j})=\d_{2}(u_{1i})$ for some $i$, which  contradicts the hypothesis that $\d_{2}(u_{1n})\neq\d_{2}(u_{2j})$ for all $n\in[\ell_1]$. Let
\begin{align*}
N=&(u_{2j}x_3^{c_2},u_{11}x_3^{c_1})\\
=&(x_2^{\d_{x_2}(u_{2j})-\d_{2}(u_{11})}x_3^{c_2-c_1},
x_1^{\d_{1}(u_{11})-\d_{1}(u_{2j})})x_1^{\d_{1}(u_{2j})}x_2^{\d_{2}(u_{11})}x_3^{c_1},
\end{align*}
then $\reg(N)=\d_{2}(u_{2j})+\d_{1}(u_{11})+c_2-1\geq (\d_{1}(u_{11})+1)+\d_{1}(u_{11})+(c_1+1)-1=d+1$.
By Claim \ref{claim:4}, we get that
$\reg(I)\ge \reg(N)\geq d+1$, a contradiction.
\end{proof}

Next, we consider the case where  $I=\sum\limits_{i=1}^{t}I_i$ has the form given as in Setting \ref{setting1}, where   $t\ge 3$.  We obtain
\begin{Lemma}\label{e4.5}
Let  $I=\sum\limits_{i=1}^{t}I_i$ be an equigenerated monomial ideal as in Setting \ref{setting1}, where  $t\ge 3$. If there exists  some $k\in \{2,\ldots,t\}$ such that $\ell_k\ge 2$ and  $\d_{1}(u_{k(j+1)})-\d_{1}(u_{kj})\geq 2$ for some  $j\in[\ell_k-1]$, and
 $\reg(I)=d$. Then  $\d_{1}(u_{(k-1)m_k})=\d_{1}(u_{k(j+1)})$ and $\d_{2}(u_{(k-1)n_k})=\d_{2}(u_{kj})$  for some  $m_k,n_k\in[\ell_{k-1}]$.
\end{Lemma}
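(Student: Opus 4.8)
The plan is to reduce to the two–block case already treated in Lemma \ref{e4}, whose condition $(4^*)$ of Definition \ref{condition1} is exactly the assertion we want, but now for the consecutive blocks $I_{k-1}$ and $I_k$. The engine throughout is the regularity monotonicity of Lemmas \ref{polar} and \ref{hyper}: if $J$ is generated by a subset of $\mathcal{G}(I)$ and $J^{\calP}$ is the edge ideal of an induced subhypergraph of $H$, then $\reg(I)=\reg(I^{\calP})\ge\reg(J^{\calP})=\reg(J)$.

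First I would pass to $\hat I=I_1+\cdots+I_k$. Each generator of $\hat I$ carries $x_3$ to a power $\le c_k$, whereas a generator of any block $I_{k'}$ with $k'>k$ carries $x_3^{c_{k'}}$ with $c_{k'}>c_k$; as polarization supports are downward closed in each variable, no such higher generator fits in the coordinate box of $\hat I$, so $\hat I^{\calP}$ is precisely the edge ideal of the induced subhypergraph of $H$ on its own support (note that, all generators having degree $d$, no generator divides another, so $\mathcal{G}(\hat I)$ is genuinely the union of the blocks $I_1,\dots,I_k$). Hence $\reg(I)\ge\reg(\hat I)$, and since $\hat I$ is equigenerated of degree $d$ (so $\beta_{0,d}(\hat I)\ne0$ and $\reg(\hat I)\ge d$) the hypothesis $\reg(I)=d$ gives $\reg(\hat I)=d$. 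If $k=2$ then $\hat I=I_1+I_2$ and Lemma \ref{e4} applies verbatim, yielding $(4^*)$ and hence the claim. For $k\ge3$ I would argue by contradiction, assuming the conclusion fails for $(k-1,k)$ and reproducing the three cases (i)–(iii) of the proof of Lemma \ref{e4}, with $I_1,I_2,c_1,c_2$ replaced by $I_{k-1},I_k,c_{k-1},c_k$; each case yields a small test ideal assembled from a few generators of $I_{k-1}$ and $I_k$ whose regularity, computed via Lemmas \ref{sum1}, \ref{regular} and \ref{spliting}, is at least $d+1$.

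The main obstacle is the one point that was automatic in Lemma \ref{e4}: certifying that each test ideal is the edge ideal of an induced subhypergraph of $H$. The higher blocks are already gone, but the lower blocks $I_1,\dots,I_{k-2}$ cannot be removed by the $x_3$-window, since their generators share the same small powers of $x_3$ as those of $I_{k-1}$ and $I_k$; a priori some $u_{k'i}x_3^{c_{k'}}$ with $k'\le k-2$ might lie in the coordinate box $(M_1,M_2,M_3)$ of a test ideal and contribute a spurious edge, breaking the regularity estimate. The crux is therefore to show this never happens. The available leverage is that such a generator has total $x_1x_2$-degree $d-c_{k'}\ge d-c_{k-2}>d-c_{k-1}$, strictly larger than the total degree of the block-$(k-1)$ generators, so it can sit in the box only if $d-c_{k'}\le M_1+M_2$ with both coordinates below their bounds; in each case the negation of $(4^*)$ forces an extremal exponent of the block-$(k-1)$ part (as in case (iii)(a) of Lemma \ref{e4}, where $\deg_{x_1}(u_{(k-1)1})$ or $\deg_{x_2}(u_{(k-1)\ell_{k-1}})$ must exceed the corresponding box bound), and one must combine this with the degree inequality to push every lower-block generator out of the box in at least one coordinate. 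Carrying out this exclusion uniformly across cases (i)–(iii) is the heart of the proof; once done, Lemmas \ref{polar} and \ref{hyper} give $\reg(I)\ge d+1$, contradicting $\reg(I)=d$, and the lemma follows.
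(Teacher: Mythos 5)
Your reduction to $\hat I=I_1+\cdots+I_k$ (the higher blocks are excluded by their $x_3$-degree, so $\hat I^{\calP}$ is indeed induced and $\reg(\hat I)=d$) and the case $k=2$ via Lemma \ref{e4} are correct. But for $k\ge 3$, which is the actual content of the lemma, the proposal is a plan rather than a proof, and the step you yourself call ``the heart of the proof'' -- pushing every generator of $I_1,\dots,I_{k-2}$ out of the coordinate box of each test ideal -- is both missing and, as stated, unachievable. Concretely, in the analogue of case (iii)(a) of Lemma \ref{e4} the test ideal is $(u_{kj},u_{k(j+1)})x_3^{c_k}$, whose box is $(\deg_{x_1}(u_{k(j+1)}),\deg_{x_2}(u_{kj}),c_k)$; a generator $u_{k'i}x_3^{c_{k'}}$ with $k'\le k-2$ has $x_1x_2$-degree $d-c_{k'}$, which is $\le \deg_{x_1}(u_{k(j+1)})+\deg_{x_2}(u_{kj})$ as soon as $\deg_{x_2}(u_{kj})-\deg_{x_2}(u_{k(j+1)})\ge c_k-c_{k'}$, so nothing prevents it from lying in the box: the negation of the conclusion constrains only block $k-1$, and no ``extremal exponent'' argument reaches the lower blocks. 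There is a second, independent problem: reproducing cases (i)--(ii) of Lemma \ref{e4} for the pair $(I_{k-1},I_k)$ needs the analogues of $(1^*)$--$(3^*)$ -- namely $c_k=c_{k-1}+1$, $\reg(I_{k-1})=d$, and the degree matching between blocks $k-1$ and $k$ -- none of which you have. They would follow from $\reg(I_{k-1}+I_k)=d$, which is blocked by exactly the same inducedness issue, and $c_k-c_{k-1}=1$ is condition $(3^{**})$ of Definition \ref{condition2}, which the paper proves in Lemma \ref{e5} \emph{using} Lemma \ref{e4.5} (via Remark \ref{3}); assuming it here would be circular.

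The paper's proof avoids both obstacles by never isolating the pair of blocks. Assuming (by symmetry) that no $u_{(k-1)m_k}$ has $x_1$-degree $\deg_{x_1}(u_{k(j+1)})$, it forms the window set $T=\{h\in\mathcal{G}(I)\mid \deg_{x_1}(h)<\deg_{x_1}(u_{k(j+1)}),\ \deg_{x_3}(h)\le c_k\}$ and the ideal $L=(W)+(u_{k(j+1)}x_3^{c_k})$, where $W$ consists of \emph{all} $h\in T$ with $\deg_{x_2}(u_{k(j+1)})<\deg_{x_2}(h)<\deg_{x_2}(u_{kj})$: generators from every block are absorbed into the test ideal rather than excluded from it, so inducedness of the polarization follows from a degree count, while the intersection $(W)\cap(u_{k(j+1)}x_3^{c_k})$ is still a single monomial of regularity $\ge d+2$, because each $h\in W$ is forced to satisfy $\deg_{x_2}(h)\ge\deg_{x_2}(u_{k(j+1)})+2$; the exact sequence then gives $\reg(L)\ge d+1$, a contradiction. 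When the window is empty, the paper instead takes $W'=\{h\in T\mid \deg_{x_2}(h)=\deg_{x_2}(u_{kj})\}$ and applies the two-block Lemma \ref{e4} to $(W')+(u_{k(j+1)}x_3^{c_k})$. Repairing your argument would mean replacing the block-built test ideals by such window ideals over all of $\mathcal{G}(I)$ -- that is, abandoning the pairwise reduction and rediscovering the paper's construction.
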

\begin{proof}
By symmetry, let $\d_{1}(u_{(k-1)m_k})\neq \d_{1}(u_{k(j+1)})$ for any $m_k\in[\ell_{k-1}]$, then
we consider the set of monomials in $\mathcal{G}(I)$
\[
T=\{h\in \mathcal{G}(I)\mid \d_{1}(h)<\d_{1}(u_{k(j+1)}) \text{\ and\ } \d_{3}(h)\leq c_{k}\}.
\]
It is obvious that  $T\ne \emptyset$, since $u_{kj}x_3^{c_k}\in T$.

Since $\d_{1}(u_{k(j+1)})-\d_{1}(u_{kj})\geq 2$ and $\d(u_{k(j+1)})=\d(u_{kj})$, we conclude that $\d_{2}(u_{kj})-\d_{2}(u_{k(j+1)})\geq 2$. There are two cases to consider:

(i) If there exists some $h\in T$ such that $\d_{2}(u_{k(j+1)})<\d_{2}(h)<\d_{2}(u_{kj})$, then we set
\[
W=\{h\in T\mid \d_{2}(u_{k(j+1)})<\d_{2}(h)<\d_{2}(u_{kj})\}
\]
 and $L=(W)+(u_{k(j+1)}x_3^{c_k})$.
Thus $(W)\cap (u_{k(j+1)}x_3^{c_k})=(x_1^{\d_{1}(u_{k(j+1)})}x_2^bx_3^{c_k})$, where $b=\min\{\d_{2}(h)\mid h\in W\}$. Also,  $\reg((u_{k(j+1)}x_3^{c_k}))=d$ and $\reg((W))=d$ by Claim \ref{claim:4}.
From the choice of $h$, we know that $b=d-\d_{1}(h)-\d_{3}(h)\geq d-(\d_{1}(u_{k(j+1)})-1)-(c_k-1)=\d_{2}(u_{k(j+1)})+2>\d_{2}(u_{k(j+1)})+1$, where $\d_{3}(h)\leq c_k-1$ is due to $\d_{2}(u_{k(j+1)})<\d_{2}(h)<\d_{2}(u_{kj})$. This forces that $\reg((W)\cap(u_{k(j+1)}x_3^{c_k}))=\d_{1}(u_{k(j+1)})+b+c_k>d+1$.
 From Lemma $\ref{exact}(2)$ and the following exact sequence
	\begin{equation}
		0\longrightarrow \frac{S}{(W) \cap (u_{k(j+1)}x_3^{c_k})}\longrightarrow \frac{S}{(W)} \oplus
\frac{S}{(u_{k(j+1)}x_3^{c_k})}\longrightarrow \frac{S}{L} \longrightarrow 0,
		\label{eqn:SES-4}
	\end{equation}	
we can get $\reg(L)\geq d+1$. By Claim \ref{claim:4}, we get that
$\reg(I)\ge \reg(L)\geq d+1$, a contradiction.

(ii) If  there is no $h\in T$ such that $\d_{2}(u_{k(j+1)})<\d_{2}(h)<\d_{2}(u_{kj})$, then
we consider the set of monomials in $\mathcal{G}(I)$
\[
W'=\{h\in T\mid \d_{2}(h)=\d_{2}(u_{kj})\}.
\]
Let $N=(W')+(u_{k(j+1)}x_3^{c_k})$, then  $\reg(N)=d$ by Claim \ref{claim:4}.
For any $h\in (W')$, we have $\d_{2}(h)=\d_{2}(u_{kj})=d-\d_{1}(u_{kj})-c_k\geq d-(\d_{1}(u_{k(j+1)})-2)-c_k=\d_{2}(u_{k(j+1)})+2$.
   Lemma \ref{e4} (1) implies that $\reg(N)\neq d$, which is  a contradiction.

We have completed the proof.
\end{proof}

\begin{Remark}\label{3} Let $I\subset S=\KK[x_1,x_2,x_3]$ be an equigenerated monomial ideal of degree $d$. We can also   write $I$  as $I=\sum\limits_{i=1}^{s}J_i$ with $s\ge 2$, where $J_i=(v_{i1},\ldots, v_{ip_i})x_k^{d_{ik}}$ such that  $d_{1k}<\cdots<d_{sk}$ and $(v_{i1},\ldots, v_{ip_i})$ is an equigenerated monomial ideal in $\KK[x_{\alpha},x_{\beta}]$, where  $\alpha, \beta\in [3]\setminus \{k\}$.  We can also  assume that  each  $v_{ij}=x_{\alpha}^{\d_{{\alpha}}(v_{ij})}x_{\beta}^{\d_{{\beta}}(v_{ij})}$ are distinct and satisfy $0\le \d_{{\alpha}}(v_{i1})<\d_{{\alpha}}(v_{i2})<\cdots<\d_{{\alpha}}(v_{ip_i})$ and $\d_{{\beta}}(v_{i1})>\d_{{\beta}}(v_{i2})>\cdots>\d_{{\beta}}(v_{ip_i})\ge 0$ for any $i\in [s]$. If there exists some $v_{ij}=x_{\alpha}^{\d_{{\alpha}}(v_{ij})}x_{\beta}^{\d_{{\beta}}(v_{ij})}$, $v_{i(j+1)}=x_{\alpha}^{\d_{{\alpha}}(v_{i(j+1)})}x_{\beta}^{\d_{{\beta}}(v_{i(j+1)})}$  such that $\d_{{\alpha}}(v_{i(j+1)})-\d_{{\alpha}}(v_{ij})\geq 2$ and
 $\reg(I)=d$, then similar to Lemma \ref{e4.5}, we can conclude that
  $\d_{{\alpha}}(v_{(i-1)m'_i})=\d_{{\alpha}}(v_{i(j+1)})$ and $\d_{{\beta}}(v_{(i-1)n'_i})=\d_{{\beta}}(v_{ij})$  for some  $m'_i,n'_i\in[p_{i-1}]$.
\end{Remark}

Similarly to Definition \ref{condition1}, we define
\begin{Definition}\label{condition2}
Let $I=\sum\limits_{i=1}^{t}I_i$ be an equigenerated monomial ideal as in Setting \ref{setting1}. We say that  $I$ satisfies  the conditions $(**)$ if  the following conditions hold

$(1^{**})$ $\reg(\sum\limits_{i=1}^{\ell}I_i)=d$ where $\ell\in [t-1]$;

$(2^{**})$ For any $r\in[t-1]$, there exist some  $i\in [2]$ such that $\d_{i}(u_{(r+1)j_i})=\d_{i}(u_{rk_i})$ $\d_{i}(u_{(r+1)j_i})=\d_{i}(u_{rk_i})$ for some $j_i\in [\ell_{r+1}]$ and $k_i\in [\ell_r]$;

$(3^{**})$ $c_{\zeta+1}-c_{\zeta}=1$ for any $\zeta\in[t-1]$;

$(4^{**})$ If $I$ has the form  $I=\sum\limits_{i=1}^{s}J_i$ as in Remark \ref{3}, and $v_{ij}=x_{\alpha}^{\d_{{\alpha}}(v_{ij})}x_{\beta}^{\d_{{\beta}}(v_{ij})}$, $v_{i(j+1)}=x_{\alpha}^{\d_{{\alpha}}(v_{i(j+1)})}x_{\beta}^{\d_{{\beta}}(v_{i(j+1)})}$  such that $\d_{{\alpha}}(v_{i(j+1)})-\d_{{\alpha}}(v_{ij})\geq 2$.
Then  $\d_{{\alpha}}(v_{(i-1)m'_i})=\d_{{\alpha}}(v_{i(j+1)})$ and $\d_{{\beta}}(v_{(i-1)n'_i})=\d_{x_{\beta}}(v_{ij})$  for some  $m'_i,n'_i\in[p_{i-1}]$. Furthermore, we also have  $\d_{x_{\alpha}}(u)> \d_{x_{\alpha}}(v_{ij})$ or $\d_{x_{\beta}}(u)\ge \d_{x_{\beta}}(v_{ij})$ for any $u\in\mathcal{G}(I)$.
\end{Definition}

Next, we are ready to prove another two main results of this section.
\begin{Lemma}\label{e5}
Let $I\subset S=\KK[x_1,x_2,x_3]$ be an equigenerated monomial ideal  as in Setting \ref{setting1}, where  $t\ge 3$.   If  $\reg(I)=d$, then $I$ satisfies the conditions $(**)$ in Definition \ref{condition2}.
\end{Lemma}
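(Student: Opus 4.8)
The plan is to establish each of the four conditions $(1^{**})$–$(4^{**})$ in turn, leveraging the two-summand results (Lemmas \ref{e1.5}, \ref{e2}, \ref{e4.5}) as the base tools and using the induced-subhypergraph monotonicity (Lemmas \ref{polar}, \ref{hyper}) to transfer regularity bounds from suitable subideals back to $I$. The guiding principle throughout is that if $\reg(I)=d$, then any induced subhypergraph $H'$ of $H$ satisfies $\reg(I(H'))\le d$ as well, so whenever a proposed subideal forces regularity $\ge d+1$ we obtain a contradiction. I would fix notation matching Setting \ref{setting1} and, for each condition, isolate the relevant two (or few) summands $I_r, I_{r+1}$ and reduce to an already-proved statement.

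First I would handle $(2^{**})$ and $(3^{**})$, which are the most direct. For any fixed $r\in[t-1]$, observe that the sub-sum $I_r+I_{r+1}$ (with the $x_3^{c_r}$ factor on $I_r$ and $x_3^{c_{r+1}}$ on $I_{r+1}$) is itself of the two-summand form in Setting \ref{setting1}, and its polarization is an induced subhypergraph of $H$; hence $\reg(I_r+I_{r+1})\le\reg(I)=d$, and combined with $\reg(I_r+I_{r+1})\ge d$ (since $I_r^{\calP}$ is induced and $\reg(I_r)=d$) we get $\reg(I_r+I_{r+1})=d$. Applying Lemma \ref{e1.5} to this pair yields $(2^{**})$, and applying Lemma \ref{e2} yields $c_{r+1}-c_r=1$, which is $(3^{**})$. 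For $(1^{**})$, note that $\sum_{i=1}^{\ell}I_i$ likewise polarizes to an induced subhypergraph, so $\reg(\sum_{i=1}^{\ell}I_i)\le d$; the reverse inequality follows because $\reg(I_1)=d$ (forced as in Lemma \ref{e1.5}) and $I_1^{\calP}$ is induced in the smaller hypergraph, giving equality.

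The real content is $(4^{**})$, and I expect this to be the main obstacle. The first half — that a gap $\deg_{x_\alpha}(v_{i(j+1)})-\deg_{x_\alpha}(v_{ij})\ge 2$ in the alternative decomposition of Remark \ref{3} forces the matching degrees $\deg_{x_\alpha}(v_{(i-1)m'_i})=\deg_{x_\alpha}(v_{i(j+1)})$ and $\deg_{x_\beta}(v_{(i-1)n'_i})=\deg_{x_\beta}(v_{ij})$ — is exactly Lemma \ref{e4.5} (via Remark \ref{3}), so it transfers for free. The genuinely new piece is the supplementary clause: $\deg_{x_\alpha}(u)>\deg_{x_\alpha}(v_{ij})$ or $\deg_{x_\beta}(u)\ge\deg_{x_\beta}(v_{ij})$ for every $u\in\mathcal{G}(I)$. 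I would argue this by contradiction: suppose some generator $u$ satisfies both $\deg_{x_\alpha}(u)\le\deg_{x_\alpha}(v_{ij})$ and $\deg_{x_\beta}(u)<\deg_{x_\beta}(v_{ij})$. Such a $u$ sits "below and to the left" of $v_{ij}$ in the $(x_\alpha,x_\beta)$-plane relative to the gap, and I would build a small ideal $L$ — of the shape $(\text{selected generators})+(v_{i(j+1)}x_k^{d_{ik}})$ analogous to the ideals $W$, $N$, $K$ used in Lemmas \ref{e4.5} and \ref{e4} — whose intersection with the principal part has a generator of degree $\ge d+2$. Feeding this through the exact sequence of Lemma \ref{exact}(2) produces $\reg(L)\ge d+1$, and since $L^{\calP}$ is induced in $H$ we contradict $\reg(I)=d$.

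The delicate points in the $(4^{**})$ argument, which I would treat carefully rather than routinely, are: (a) verifying that the generator selection produces a genuinely \emph{induced} subhypergraph (so that Lemmas \ref{polar}, \ref{hyper} apply), which requires checking that no extraneous generators of $I$ land in the chosen vertex support and alter the edge ideal; and (b) ensuring the degree bookkeeping with the $x_k^{d_{ik}}$ factors correctly yields the $\ge d+2$ estimate on the intersection, using $d_{ik}-d_{(i-1)k}=1$ from the analogue of $(3^{**})$ in the second decomposition. Because the second decomposition fixes a different distinguished variable $x_k$, I must be mindful that Setting \ref{setting1} was stated with $x_3$ as distinguished, so I would either invoke symmetry explicitly or re-derive the needed two-summand facts in the $x_k$-adapted coordinates; this coordinate-change bookkeeping is where the proof is most error-prone. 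Once all four conditions are verified, the lemma follows immediately.
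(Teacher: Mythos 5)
Your reductions of $(2^{**})$ and $(3^{**})$ rest on the claim that $(I_r+I_{r+1})^{\calP}$ is the edge ideal of an \emph{induced} subhypergraph of $H$, and this claim is false in general for $r\ge 2$. The vertex set of $(I_r+I_{r+1})^{\calP}$ is a ``box'' $\{x_{1k}:k\le A_1\}\cup\{x_{2k}:k\le A_2\}\cup\{x_{3k}:k\le c_{r+1}\}$, and a generator of some $I_i$ with $i<r$ has $x_3$-degree $c_i<c_{r+1}$ and may well have both $\deg_{x_1}\le A_1$ and $\deg_{x_2}\le A_2$ (only the \emph{sum} of its $x_1$- and $x_2$-degrees exceeds $d-c_r$), so its polarization can sit inside the box. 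Concretely, for $d=9$ and $I=(x_1^4x_2^4)x_3+(x_1^6,x_2^6)x_3^3+(x_1^5)x_3^4$, the polarization of $x_1^4x_2^4x_3$ is supported inside the vertex set of $(I_2+I_3)^{\calP}$, so the induced subhypergraph acquires an extra edge. Since regularity can strictly \emph{increase} when one passes to a non-induced subset of generators (e.g.\ $(x_1x_2,x_3x_4)$ has regularity $3$ while $(x_1x_2,x_2x_3,x_3x_4)$ has regularity $2$), you cannot conclude $\reg(I_r+I_{r+1})\le\reg(I)=d$, and hence cannot invoke Lemma \ref{e1.5} or Lemma \ref{e2} for the pair. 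Note also that the interlocking of degrees forced by $\reg(I)=d$ (which is what $(2^{**})$ asserts) makes such extraneous generators the typical situation, not a pathology. Only the initial sums $\sum_{i=1}^{\ell}I_i$ are automatically induced, because every omitted generator contains the vertex $x_{3,c_{\ell}+1}$; this is exactly why $(1^{**})$ and condition $(2^*)$ of Definition \ref{condition1} are phrased for initial pieces only, and your argument for $(1^{**})$ is fine for the same reason.

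The paper circumvents this obstacle differently: $(2^{**})$ is proved via Lemma \ref{e4.5} in the gap case and, in the no-gap case, by \emph{repeating} the argument of Lemma \ref{e1.5} with generator sets cut out of all of $\mathcal{G}(I)$ by degree inequalities (as with the set $T$ in Lemma \ref{e4.5}), which do define induced subhypergraphs; and $(3^{**})$ is proved by a genuinely different argument — switching to the $x_1$-distinguished decomposition of Remark \ref{3}, showing two auxiliary monomials $v_{(q-1)m_q'}$ and $v_{(q-1)n_q'}$ coincide, and deriving a numerical contradiction through Lemma \ref{e4} applied to a small three-generator ideal — rather than by applying Lemma \ref{e2} to $I_\zeta+I_{\zeta+1}$. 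Your sketch of $(4^{**})$ does follow the paper's actual route (an ideal $L$ with $\reg(L\cap K)\ge d+2$, the exact sequence of Lemma \ref{exact}(2), then Lemmas \ref{polar} and \ref{hyper}), and you rightly flag the induced-subhypergraph verification as the delicate point there; but that same verification is precisely what invalidates your shortcut for $(2^{**})$ and $(3^{**})$, so the proposal as it stands has a genuine gap in those two conditions.
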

\begin{proof}
First, we will prove  the condition $(1^{**})$  by descending induction on $\ell$. If $\ell=t-1$, then
we choose  $J=\sum\limits_{i=1}^{t-1}I_i$, By Claim \ref{claim:4}, one has
 $\reg(J)=d$. Now, assume that $\ell\le t-1$ and $\reg(\sum\limits_{i=1}^{\ell}I_i)=d$. Choose $J_1=\sum\limits_{i=1}^{\ell-1}I_i$.  By Claim \ref{claim:4}, we get that
 $\reg(\sum\limits_{i=1}^{\ell}I_i)\geq\reg(J_1)\geq d$, which implies $\reg(\sum\limits_{i=1}^{\ell-1}I_i)=d$.

$(2^{**})$ For any $r\in[t-1]$, we distinguish between  the following two cases:

 (a) If $\ell_{r+1}\ge 2$ and  $\d_{1}(u_{(r+1)(j+1)})-\d_{1}(u_{(r+1)j})\geq 2$ for some  $j\in[\ell_{r+1}-1]$, then the desired result follows from  Lemma \ref{e4.5}.

 (b) If $\ell_{r+1}=1$, or  $\ell_{r+1}\ge 2$ and  $\d_{1}(u_{(r+1)(j+1)})-\d_{1}(u_{(r+1)j})=1$ for any  $j\in[\ell_{r+1}-1]$, then  by similar arguments as the proof of Lemma \ref{e1.5}, we can obtain the desired result.

$(3^{**})$  If $c_{\zeta+1}-c_{\zeta}\ge 2$, then,  by symmetry and the condition $(2^{**})$, we can assume that
 $\d_{1}(u_{(\zeta+1)j_1})=\d_{1}(u_{\zeta k_1})$ for some $j_1\in [\ell_{\zeta+1}]$ and $k_1\in [\ell_{\zeta}]$. Thus $u_{(\zeta+1)j_1}x_3^{c_{\zeta+1}}$ and $u_{\zeta k_1}x_3^{c_{\zeta}}$ be written as $u_{(\zeta+1)j_1}x_3^{c_{\zeta+1}}=v_{qr}x_1^{d_{q1}}$ and  $u_{\zeta k_1}x_3^{c_{\zeta}}=v_{q(r+1)}x_1^{d_{q1}}$, respectively, where $v_{qr},v_{q(r+1)}\in\KK[x_2,x_3]$. Since  $\d_{3}(u_{(\zeta+1)j_1})-\d_{3}(u_{\zeta k_1})\\ =c_{\zeta+1}-c_{\zeta}\ge 2$, by Remark \ref{3}, there exist two monomials $v_{(q-1)m_q'},v_{(q-1)n_q'}\in\KK[x_2,x_3]$ such that $\d_{2}(v_{(q-1)m_q'})=\d_{2}(v_{q(r+1)})=\d_{2}(u_{\zeta k_1})$ and $\d_{3}(v_{(q-1)n_q'})=\d_{3}(v_{qr})=c_{\zeta+1}$.

 Now, we will prove that $v_{(q-1)m_q'}=v_{(q-1)n_q'}$. It suffices to show that $\d_{3}(v_{(q-1)m_q'})=\d_{x_3}(v_{(q-1)n_q'})$,  since $\deg(v_{(q-1)m_q'})=\deg(v_{(q-1)n_q'})$. By Remark  \ref{3}, $d_{(q-1)1}\le  d_{q1}-1$. Thus  $c_{\zeta+1}\ge \d_{3}(v_{(q-1)m_q'})=d-d_{(q-1)1}-\d_{2}(v_{(q-1)m_q'})\ge d-(d_{q1}-1)-\d_{2}(v_{(q-1)m_q'})= d-(d_{q1}-1)-\d_{2}(v_{q(r+1)})=\d_{3}(v_{q(r+1)})+1=c_{\zeta}+1$, which forces $\d_{3}(v_{(q-1)m_q'})=c_{\zeta+1}$. The desired result follows from the fact that
  $\d_{3}(v_{(q-1)n_q'})=c_{\zeta+1}$.

Next, we compute the regularity of  $K=(v_{(q-1)m_q'}x_1^{d_{(q-1)1}})+(v_{qr},v_{q(r+1)})x_1^{d_{q1}}$.  By Claim \ref{claim:4}, then $\reg(K)=d$. It follows from Lemma \ref{e4} that $d_{(q-1)1}=d_{q1}-1$. However,
 $d_{(q-1)1}=\d_{1}(v_{(q-1)m_q'}x_1^{d_{(q-1)1}})=d-\d_{2}(v_{(q-1)m_q'})-\d_{3}(v_{(q-1)m_q'})=d-\d_{2}(u_{\zeta k_1})-c_{\zeta+1}\leq d-\d_{2}(u_{\zeta k_1})-(c_{\zeta}+2)=d-\d_{2}(u_{\zeta k_1})-\d_{3}(u_{\zeta k_1})-2=\d_{1}(u_{\zeta k_1})-2=d_{q1}-2$. It's impossible.

$(4^{**})$  The first part follows from  Remark \ref{3}.  Now, we will prove the second part.
If there exists some $u\in\mathcal{G}(I)$ such that  $\d_{{\alpha}}(u)\le \d_{{\alpha}}(v_{ij})$ and $\d_{{\beta}}(u)<\d_{{\beta}}(v_{ij})$, then
we set
\begin{align*}
i_1&=\min\{i'\mid \d_{{\alpha}}(v_{i'(j_{i'}+1)})-\d_{{\alpha}}(v_{i'j_{i'}})\geq 2 \text{\ and\ } \d_{{\beta}}(v_{i'j_{i'}})=\d_{{\beta}}(v_{ij})\},\\
a&=\min\{\d_{k}(v)\mid \d_{{\alpha}}(v)\leq \d_{{\alpha}}(v_{i_1j_{i_1}})\text{\ and\ } \d_{{\beta}}(v)<\d_{{\beta}}(v_{i_1j_{i_1}})\},\\
b&=\min\{\d_{{\alpha}}(v)\mid \d_{{k}}(v)=a\text{\ and\ }\d_{{\beta}}(v)<\d_{{\beta}}(v_{i_1j_{i_1}})\}.
\end{align*}
Since $\d_{{\alpha}}(v_{(i_1-1)m_{i_1}'})=\d_{{\alpha}}(v_{i_1(j_{i_1}+1)})$, $\d_{{\beta}}(v_{(i_1-1)n_{i_1}'})=\d_{{\beta}}(v_{i_1j_{i_1}})$ for some $m_{i_1}',n_{i_1}'\in[\ell_{i_1-1}]$,
we have $n_{i_1}'<m_{i_1}'$.  By minimality of $i_1$,
\[
v_{(i_1-1)(n'_{i_1}+1)}x_{k}^{d_{(i_1-1)k}}=x_{\alpha}^{\d_{{\alpha}}(v_{(i_1-1)(n'_{i_1}+1)})}
x_{\beta}^{\d_{{\beta}}(v_{(i_1-1)(n'_{i_1}+1)})}x_{k}^{d_{(i_1-1)k}}
\]
satisfies $\d_{{\alpha}}(v_{(i_1-1)(n'_{i_1}+1)})=\d_{{\alpha}}(v_{(i_1-1)n'_{i_1}})+1$,
 that is, \[v_{(i_1-1)(n'_{i_1}+1)}x_{k}^{d_{(i_1-1)k}}=x_{\alpha}^{\d_{{\alpha}}(v_{i_1j_{i_1}})+2}
 x_{\beta}^{\d_{{\beta}}(v_{i_1j_{i_1}})-1}x_k^{d_{i_1k}-1}.\]

 Let $K$ and $L$ be two ideals with $\mathcal{G}(K)=\{v\in\mathcal{G}(I)\mid \d_{{\alpha}}(v)\leq \d_{{\alpha}}(v_{i_1j_{i_1}})+2 \text{\and\ }\d_{{\beta}}(v)<\d_{{\beta}}(v_{i_1j_{i_1}})\}$ and $\mathcal{G}(L)=\{v\in\mathcal{G}(I)\mid \d_{{\alpha}}(v)\leq \d_{{\alpha}}(v_{i_1j_{i_1}})+2, \d_{{\beta}}(v)=\d_{{\beta}}(v_{i_1j_{i_1}}) \text{\and\ } \d_{{k}}(v)\leq a\}=\{v_{wq}x_{\beta}^{\d_{{\beta}}(v_{i_1j_{i_1}})},v_{w(q+1)}x_{\beta}^{\d_{{\beta}}(v_{i_1j_{i_1}})},\\
 \ldots,v_{w(q+\ell)}x_{\beta}^{\d_{{\beta}}(v_{i_1j_{i_1}})}\}$,
 where $v_{wp}=x_{\alpha}^{\d_{{\alpha}}(v_{wp})}x_{k}^{\d_{{k}}(v_{wp})}\in\KK[x_{\alpha},x_{\beta}]$, $\alpha, k\in [3]\setminus \{\beta\}$ are distinct and $\d_{{\alpha}}(v_{wq})<\d_{{\alpha}}(v_{w(q+1)})<\cdots<\d_{{\alpha}}(v_{w(q+\ell)})$.
Then, for any $p\in [q+\ell-1]\setminus [q]$, $\d_{{\alpha}}(v_{w(p+1)})-\d_{{\alpha}}(v_{wp})=1$, which implies  $\reg(L)=d$ by Lemma \ref{regular}.
In fact, if $\d_{{\alpha}}(v_{w(p+1)})-\d_{{\alpha}}(v_{wp})\geq 2$ for  some $p\in [q+\ell-1]\setminus [q]$, then by Remark \ref{3},  $\d_{{\alpha}}(v_{(w-1)m_w})=\d_{{\alpha}}(v_{w(p+1)})$ and $\d_{{k}}(v_{(w-1)n_w})=\d_{{k}}(v_{wp})$ for some $m_w,n_w\in[\ell_{s-1}]$. Thus $\deg(v_{(w-1)m_w})=\deg(v_{wp})+1=d-(\d_{{\beta}}(v_{i_1j_{i_1}})-1)$ by the condition $(3^{**})$, which forces that
 $\d_{{k}}(v_{(w-1)m_w})=[d-(\d_{{\beta}}(v_{i_1j_{i_1}})-1)]-\d_{{\alpha}}(v_{(w-1)m_w})=
 (d-\d_{{\beta}}(v_{i_1j_{i_1}})+1)-\d_{{\alpha}}(v_{w(p+1)})\le (d-\d_{{\beta}}(v_{i_1j_{i_1}})+1)-(\d_{{\alpha}}(v_{wp})+2)=\d_{{k}}(v_{wp})-1\leq a-1$,
where the first  inequality holds because $\d_{{\alpha}}(v_{w(p+1)})-\d_{{\alpha}}(v_{wp})\ge 2$.
This contradicts with the  minimality  of $a$.

Let $b'=\max\{\d_{{\alpha}}(v_{wq}),b\}$, then $b'\leq \d_{{\alpha}}(v_{i_1j_{i_1}})$. In fact, from  the relationship $\d_{{\alpha}}(v_{wq})<\d_{{\alpha}}(v_{w(q+1)})<\cdots<\d_{{\alpha}}(v_{w(q+\ell)})$ and the   definition of $b$, we can conclude that $\d_{{\alpha}}(v_{wq})\leq \d_{{\alpha}}(v_{i_1j_{i_1}})$ and $b\leq \d_{{\alpha}}(v_{i_1j_{i_1}})$.
 This implies that
 \begin{align*}
 L\cap K=&(x_{\alpha}^{\d_{{\alpha}}(v_{i_1j_{i_1}})+2}x_{\beta}^{\d_{{\beta}}(v_{i_1j_{i_1}})}x_k^{d_{i_1k}-1},x_{\alpha}^{b'}x_{\beta}^{\d_{{\beta}}(v_{i_1j_{i_1}})}x_k^{a})\\
 =&(x_{\alpha}^{\d_{{\alpha}}(v_{i_1j_{i_1}})+2-b'},x_k^{a-(d_{i_1k}-1)})x_{\alpha}^{b'}x_{\beta}^{
\d_{{\beta}}(v_{i_1j_{i_1}})}x_k^{d_{i_1k}-1}.
\end{align*}
Therefore, $\reg(L\cap K)=(b'+
\d_{{\beta}}(v_{i_1j_{i_1}})+d_{i_1k}-1)+(\d_{{\alpha}}(v_{i_1j_{i_1}})+2-b')+(a-d_{i_1k}+1)-1=
\d_{{\beta}}(v_{i_1j_{i_1}})+\d_{{\alpha}}(v_{i_1j_{i_1}})+a+1> d+1$ since $a=d-\d_{{\alpha}}(v)-\d_{{\beta}}(v)>d-\d_{{\alpha}}(v_{i_1j_{i_1}})-\d_{{\beta}}(v_{i_1j_{i_1}})$.

Since both $K^{\calP}$ and $J^{\calP}$ are the edge ideals of induced subhypergraphs  of $H$,  $\reg(K)=\reg(J)=d$ by Lemmas \ref{polar} and \ref{hyper}.
By Lemma $\ref{exact}(2)$ and the  exact sequence
	\begin{equation*}
		0\longrightarrow \frac{S}{K \cap L}\longrightarrow \frac{S}{K} \oplus
\frac{S}{L}\longrightarrow \frac{S}{K+L} \longrightarrow 0,
		\label{eqn:SES-4}
	\end{equation*}	
we can get $\reg(K+L)\geq d+1$,  a contradiction.
\end{proof}

\begin{Lemma}\label{e6}
Let $I\subset S=\KK[x_1,x_2,x_3]$ be an equigenerated monomial ideal  as in Setting \ref{setting1}   satisfying condition $(**)$ of Definition \ref{condition2}. Then $\reg(I)=d$.
\end{Lemma}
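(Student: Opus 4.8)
The target $\reg(I)=d$ will follow once we exhibit an ordering of $\mathcal{G}(I)$ with linear quotients, since $I$ is generated in the single degree $d$ and linear quotients force a linear resolution. I would prove this by induction on the number $t$ of $x_3$-layers. The base case $t=2$ is Lemma \ref{e4.1}: one only has to observe that for $t=2$ the hypotheses $(**)$ specialize to $(*)$, with $(1^{**}),(2^{**}),(3^{**})$ becoming $(2^*),(3^*),(1^*)$, and with $(4^{**})$, read through the $x_3$-decomposition (the case $k=3$ of Remark \ref{3}), containing $(4^*)$.

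For the inductive step write $I'=\sum_{i=1}^{t-1}I_i$. I would first check that $I'$ again satisfies $(**)$: conditions $(1^{**})$--$(3^{**})$ for $I'$ are literally the instances $\ell,r,\zeta\le t-2$ of those for $I$, while $(4^{**})$ for $I'$ must be re-derived from the gap data of $I$ together with the ``furthermore'' clause. By the induction hypothesis $I'$ then has linear quotients for an ordering $w_1,\dots,w_N$ built layer by layer as in Lemma \ref{e4.1}, each successive colon being $(x_1)$, $(x_2)$ or $(x_1,x_2)$. Because $(3^{**})$ gives $c_t=c_{t-1}+1$, every generator $v_{tj}=u_{tj}x_3^{c_t}$ of the top layer has strictly larger $x_3$-degree than every generator of $I'$, so when appending the $v_{tj}$ the variable $x_3$ never enters a colon and each colon of $v_{tj}$ against an earlier generator is a monomial in $x_1,x_2$ only.

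I would then append the generators of $I_t$ in the same ``outward from the shared generator'' order used in Lemma \ref{e4.1}: begin at the $v_{tk_1}$ whose $x_1$- or $x_2$-degree is shared with layer $t-1$ by $(2^{**})$, run up and then down the staircase, and split the two runs at each gap. The anchoring is automatic: if layers $t-1$ and $t$ share an $x_1$-degree, the matching generator $u_{(t-1)\ast}$ of layer $t-1$ has total degree one larger (since $c_{t-1}=c_t-1$), hence $x_2$-degree one larger, so its colon against $v_{tk_1}$ is exactly $x_2$; symmetrically a shared $x_2$-degree yields $x_1$. Along a run with no gaps the consecutive generators of $I_t$ contribute the same single variable, and the colon first becomes $(x_1,x_2)$ when the generator realizing the opposite variable (whether from layer $t-1$ or from the already-listed part of $I_t$) is reached.

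The crux is the gap case $\deg_{x_1}(u_{t(j+1)})-\deg_{x_1}(u_{tj})\ge 2$, where the naive colon $v_{t(j+1)}:v_{tj}$ is a power $x_1^{p}$ with $p\ge 2$ and is not a variable. This is precisely what $(4^{**})$ is designed to repair: it provides a layer-$(t-1)$ generator $u_{(t-1)n_t}$ with $\deg_{x_2}(u_{(t-1)n_t})=\deg_{x_2}(u_{tj})$, and since $c_{t-1}=c_t-1$ this forces $\deg_{x_1}(u_{(t-1)n_t})=\deg_{x_1}(u_{tj})+1$, so $v_{(t-1)n_t}:v_{tj}=x_1$ and the offending $x_1^{p}$ is no longer a minimal generator of the colon; the companion matching $\deg_{x_1}(u_{(t-1)m_t})=\deg_{x_1}(u_{t(j+1)})$ controls the other endpoint, while the ``furthermore'' clause of $(4^{**})$ excludes any generator lying weakly below and strictly to the left of $v_{tj}$ that might otherwise inject a non-variable minimal colon. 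The main work --- and the main obstacle --- is thus the careful verification that, after collecting all these contributions, the colon ideal of every appended $v_{tj}$ is generated by a subset of $\{x_1,x_2\}$, together with the inheritance of $(4^{**})$ by $I'$ needed to start the induction; once these are in place $I$ has linear quotients and $\reg(I)=d$.
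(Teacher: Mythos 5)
Your proposal is correct and takes essentially the same route as the paper's own proof: induction on the number $t$ of $x_3$-layers, base case $t=2$ from Lemma \ref{e4.1}, then appending the top layer in the Lemma \ref{e4.1} order, with the anchor colon supplied by $(2^{**})$ and $(3^{**})$ (giving $(x_2)$, resp.\ $(x_1)$, exactly as you compute) and the gaps repaired through $(4^{**})$, matching the paper's cases (i) and (ii). The only difference is bookkeeping: you explicitly flag that $I'=\sum_{i=1}^{t-1}I_i$ must be shown to satisfy $(**)$ before the inductive hypothesis applies, a point the paper passes over silently; it can be settled by noting that $(1^{**})$ already gives $\reg(I')=d$, whence Lemmas \ref{e4} and \ref{e5} return the conditions $(*)$ or $(**)$ for $I'$.
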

\begin{proof} Let $I=\sum\limits_{i=1}^{t}I_i$, where each $I_i$ has the form described in Setting \ref{setting1}. By induction on $t$, we will prove that $I$ has linear quotients, which implies that $\reg(I)=d$. Case $t=2$ follows from the proof of Lemma \ref{e4.1}.
Assuming $t\ge 3$ and $\sum\limits_{i=1}^{t-1}I_i$ has linear quotients. Thus there exists an order
 of  minimal generators of $\sum\limits_{i=1}^{t-1}I_i$, say $v_{11},\ldots,v_{1\ell_1},\ldots, v_{(t-1)1},\ldots,v_{(t-1)\ell_{(t-1)}}$, such that it has  linear quotients.
Let $v_{ij}=u_{ij}x_3^{c_i}$ for each $i\in[t]$ and $j\in [\ell_i]$. By  symmetry and the condition $(2^{**})$ in Definition \ref{condition2}, we  can assume that
 $\d_{1}(u_{tj_1})=\d_{1}(u_{(t-1)k_1})$ for some $j_1\in [\ell_t]$ and $k_1\in [\ell_{t-1}]$.
Thus  $((v_{(t-1)k_1}):v_{tj_1})=((u_{(t-1)k_1}x_3^{c_{t-1}}):u_{tj_1}x_3^{c_{t}})=(x_2)$ since $c_{t-1}=c_t-1$. Furthermore, $\d_{2}(v_{(t-1)k_1})=d-\d_{1}(v_{(t-1)k_1})-c_{t-1}=d-\d_{1}(v_{tj_1})-(c_{t}-1)=\d_{2}(v_{tj_1})+1$.

 We first show that the ideal  $((v_{11},\ldots,v_{1\ell_1},\ldots, v_{(t-1)1},\ldots,v_{(t-1)\ell_{t-1}}):v_{tj_1})$, i.e. $((\sum\limits_{i=1}^{t-1}I_i):v_{tj_1})$,  is generated by a subset of the variables. We divide into  the following two cases:

(i) If $\d_{2}(v)\geq\d_{2}(v_{tj_1})+1$ for any $v\in\mathcal{G}(\sum\limits_{i=1}^{t-1}I_i)$, then $((\sum\limits_{i=1}^{t-1}I_i):v_{tj_1})=(x_2)$, since $(v_{(t-1)k_1}):v_{tj_1}=(x_2)$.

(ii) If $\d_{2}(v)\le\d_{2}(v_{tj_1})$ for some $v\in\mathcal{G}(\sum\limits_{i=1}^{t-1}I_i)$, note that $\d_{2}(v_{(t-1)k_1})=\d_{2}(v_{tj_1})+1$. Then there exists some $w\in\mathcal{G}(\sum\limits_{i=1}^{t-1}I_i)$ such that $\d_{2}(w)=\d_{2}(v_{tj_1})$ by Lemma \ref{e5} and the condition $(3^{**})$ in Definition \ref{condition2} since $\sum\limits_{i=1}^{t-1}I_i$ has linear quotients. We can choose $w$ to be a generator in $\mathcal{G}(\sum\limits_{i=1}^{t-1}I_i)$  with the smallest degree over $x_1$ such that $\d_{2}(w)=\d_{2}(v_{tj_1})$, then $\d_{1}(w)=\d_{1}(v_{tj_1})+1$. Indeed, since
$\d_{3}(v_{tj_1})>\d_{3}(w)$ and $\deg(w)=\deg(v_{tj_1})$, $\d_{1}(w)>\d_{1}(v_{tj_1})$. If $\d_{1}(w)- \d_{1}(v_{tj_1})\geq 2$,
then by the condition $(4^{**})$ in Definition \ref{condition2}, we see that  $\d_{1}(v)>\d_{1}(v_{tj_1})$ or $\d_{3}(v)\ge\d_{3}(v_{tj_1})$  for any $v\in\mathcal{G}(\sum\limits_{i=1}^{t-1}I_i)$.
However, for the monomial $v_{(t-1)k_1}$ in $\mathcal{G}(\sum\limits_{i=1}^{t-1}I_i)$, we already know that $\d_{1}(v_{(t-1)k_1})=\d_{1}(v_{tj_1})$ and $\d_{3}(v_{(t-1)k_1})=\d_{3}(v_{tj_1})-1$. It is impossible.
In  other words, there exists some $w\in\mathcal{G}(\sum\limits_{i=1}^{t-1}I_i)$ such that $\d_{2}(w)=\d_{2}(v_{tj_1})$ and $\d_{1}(w)=\d_{1}(v_{tj_1})+1$.
Thus $((\sum\limits_{i=1}^{t-1}I_i):v_{tj_1})=(x_1,x_2)$.

Next, we will show that $I$ has  linear quotients.  We will present an order  $v_{11},\ldots,v_{1\ell_1},\\
\ldots, v_{(t-1)1},\ldots,v_{(t-1)\ell_{(t-1)}},v_{t1}, \ldots, v_{t\ell_t}$
 of  minimal generators in  $I$, where  $v_{t1}, \ldots, v_{t\ell_t}$ are taken similarly to  $v_{21}, \ldots, v_{2\ell_2}$
 in Lemma \ref{e4.1}. If  $\ell_t=1$, then  $I$ has linear quotients by the (i) and (ii) above. If  $\ell_t\ge 2$, then we  replace $v_{11},\ldots,v_{1\ell_1},\ldots, v_{(t-1)1},\ldots,v_{(t-1)\ell_{t-1}}$, $\sum\limits_{i=1}^{t-1}I_i$ and  $v_{t1}, \ldots, v_{t\ell_t}$ by $v_{11},\ldots,v_{1\ell_1}$, $I_1$ and    $v_{21}, \ldots, v_{2\ell_2}$, respectively. By similar arguments as in the proof of Lemma \ref{e4.1}, we can deduce that $I$ has linear quotients.
We omit its proof.
\end{proof}

From the above series of lemmas, we obtain

\begin{Theorem}\label{e7}
Let $I\subset S=\KK[x_1,x_2,x_3]$ be an equigenerated monomial ideal of degree $d$.  Then the following statements are equivalent:

$(1)$ $\reg(I)=d$;

$(2)$ $I$ has a linear resolution;

$(3)$ $I$ has linear quotients.
\end{Theorem}
\begin{proof}
The equivalence of $(1)$ and $(2)$ is clear. $(3)$ implies $(2)$ follows from \cite[Lemma 4.1]{CH}. $(1)\Rightarrow(3)$:  Let  $I=\sum\limits_{i=1}^{t}I_i$ be an equigenerated monomial ideal as in Setting \ref{setting1}. The case for $t=1$ follows from Lemma \ref{e1}, and   the case for $t\ge 2$ follows from the proofs of Lemma \ref{e6}.
\end{proof}

Finally, we will demonstrate that  Conjecture \ref{conj_KP} holds for equigenerated monomial
ideals when $n=3$.
\begin{Theorem}\label{e8}
Let $I\subset S=\KK[x_1,x_2,x_3]$ be an equigenerated monomial ideal  of degree $d$ as in Remak \ref{3}, where  $s\ge 2$. If $\reg(I)=d$, then $\reg(\overline{I})=d$.
\end{Theorem}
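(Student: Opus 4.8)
The plan is to show that $\overline{I}$ is again equigenerated of degree $d$ and has linear quotients, so that $\reg(\overline{I})=d$ follows from Theorem \ref{e7}. The lower bound is free: since $I$ is equigenerated of degree $d$, every vertex of the Newton polyhedron lies on the hyperplane $\sum_i a_i=d$, so $\delta(I)=d$, and Lemma \ref{compare} gives $\reg(\overline{I})\ge\delta(I)=d$. Everything therefore reduces to proving $\reg(\overline{I})\le d$, and by the minimal free resolution this already forces $\overline{I}$ to have no minimal generator of degree $>d$; so the real content is to establish that $\overline{I}$ is generated in degree $d$ and then that it has linear quotients.

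First I would pin down the generators of $\overline{I}$. By Lemma \ref{e0}, $\overline{I}$ is generated by the monomials $\mathbf{x}^{\lceil\mathbf{a}\rceil}$ with $\mathbf{a}\in\conv(\mathcal{G}(I))$. Write $P=\conv(\mathcal{G}(I))$; since $V(I)\subseteq\mathcal{G}(I)$, the vertices of $P$ are lattice points, so $P$ is a (two--dimensional) lattice polygon lying in the plane $\sum_i a_i=d$. Every integral point of $P$ is itself such an $\mathbf{a}$, so $\overline{I}$ contains the equigenerated ideal $I'$ generated by \emph{all} lattice points of $P$, and $I\subseteq I'\subseteq\overline{I}$ forces $\overline{I'}=\overline{I}$. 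Thus the whole theorem comes down to the single claim $\overline{I}=I'$, i.e.\ that $\overline{I}$ has no minimal generator of degree $>d$: granting this, $\overline{I}=I'$ is equigenerated of degree $d$, and it remains only to run the linear--quotient verification on $I'$.

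I expect the claim $\overline{I}=I'$ to be the main obstacle, since it is the only place where the full linear--quotient structure of $I$ (not merely its Newton polygon) is used. A minimal generator $\mathbf{x}^{\mathbf{w}}$ of $\overline{I}$ with $|\mathbf{w}|>d$ is a lattice point $\mathbf{w}\in\mathcal{C}(I)$ with $\mathbf{w}-e_i\notin\mathcal{C}(I)$ for all $i$; writing $\mathbf{w}=\mathbf{p}+\mathbf{r}$ with $\mathbf{p}\in P$ and $\mathbf{r}\in\mathbb{R}^3_{\ge0}$, minimality forces $\mathbf{r}\in[0,1)^3$ while $|\mathbf{r}|\ge1$, so such a $\mathbf{w}$ would represent a genuine ``lattice overhang'' of the boundary of $P$. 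Conceptually this cannot happen because a two--dimensional lattice polytope is normal (has the integer decomposition property), which makes $I'$ integrally closed; but staying inside the paper's framework I would instead rule it out by hand, using that $\reg(I)=d$ yields (via Lemma \ref{e5}) the gap conditions $(4^{**})$ of Definition \ref{condition2} and their reformulation in Remark \ref{3}. These conditions say precisely that whenever a slice of $P$ has an $x$--gap of size $\ge2$ it is ``backed up'' by generators in the adjacent slice at the gap endpoints, which is exactly the configuration that would otherwise let $\lceil\mathbf{a}\rceil$ escape domination; carrying this out for degrees $d+1$ and $d+2$ (the only possibilities, by the bound $\d(\overline{I})\le\delta(I)+\dim S/I$ of Lemma \ref{compare}) should contradict the existence of $\mathbf{w}$.

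Finally, to see $\reg(I')=d$ I would check that $I'$ satisfies conditions $(**)$ of Definition \ref{condition2} and invoke Lemma \ref{e6}. This is now easy: slicing $P$ by any plane $x_k=c$ gives a contiguous segment of lattice points, so after filling in all of $P$ the consecutive degrees within every layer differ by exactly $1$; hence the hypothesis of $(4^{**})$ never triggers and that condition holds vacuously, while $(1^{**})$ follows by induction on the number of layers (each partial sum is the ``all lattice points'' ideal of a truncated lattice polygon), and $(2^{**})$, $(3^{**})$ are inherited from the convexity of $P$ and the relation $c_{i+1}-c_i=1$. By Lemma \ref{e6}, $I'$ has linear quotients, whence $\reg(\overline{I})=\reg(I')=d$.
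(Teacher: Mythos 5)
Your overall skeleton coincides with the paper's: reduce to showing that $\overline{I}$ is equigenerated of degree $d$, verify conditions $(**)$ of Definition \ref{condition2} for it, and invoke Lemma \ref{e6} (equivalently Theorem \ref{e7}); your lower bound via Lemma \ref{compare} and the reduction $I\subseteq I'\subseteq\overline{I}$ are fine. The genuine gap is that the decisive step---your claim $\overline{I}=I'$, i.e.\ that $\overline{I}$ has no minimal generator of degree $>d$---is never proved: ``carrying this out for degrees $d+1$ and $d+2$ \dots should contradict the existence of $\mathbf{w}$'' is a plan, not an argument, and this is exactly where all the work lies. In the paper this step is the longest part of the proof: for $f=x_1^{\lceil a\rceil}x_2^{\lceil b\rceil}x_3^{\lceil c\rceil}\in\mathcal{G}(\overline{I})\setminus\mathcal{G}(I)$ with, say, $c\notin\mathbb{N}$, one realizes $(a,b,c)$ as an interior point of a segment joining points of $\conv(\mathcal{G}(I))$ at heights $\lfloor c\rfloor$ and $\lceil c\rceil$, and then uses condition $(2^{**})$---available only because $\reg(I)=d$---to manufacture a monomial of $\overline{I}$ properly dividing $f$, a contradiction; hence $a,b,c\in\mathbb{N}$ and $\deg f=d$. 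Nothing in your proposal substitutes for this argument.

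Moreover, the conceptual justification you offer for the claim is false, so the gap cannot be waved away. Normality (IDP) of a two-dimensional lattice polygon does not make the ideal of its lattice points integrally closed. Take $I=(x_3^5,\ x_1x_2^2x_3^2,\ x_1^2x_2^3)$: its Newton polygon $P$ is the two-dimensional lattice triangle with vertices $(0,0,5)$, $(1,2,2)$, $(2,3,0)$, which contains no further lattice points, so $I'=I$; yet $u=x_1x_2x_3^4$ satisfies $u^2=(x_1x_3)\cdot x_3^5\cdot x_1x_2^2x_3^2\in I^2$, so $u\in\overline{I}\setminus I$ is a minimal generator of $\overline{I}$ of degree $6$. (Here $\reg(I)\neq 5$: the $x_2$-levels of $P$ are $0,2,3$, violating $(3^{**})$, which is precisely why the hypothesis $\reg(I)=d$ is indispensable and why any correct proof must route through it, as the paper's does.) The same example shows that your derivation of $(2^{**})$ and $(3^{**})$ for $I'$ ``from the convexity of $P$'' is unjustified: convexity alone does not even guarantee that consecutive integer slices of a lattice polygon contain lattice points, let alone share a coordinate. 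These conditions must instead be inherited from $I$ itself --- the paper obtains them from $\mathcal{G}(J_i)\subseteq\mathcal{G}(J_i')$ together with $s=s'$, and from Lemma \ref{e5} applied to $I$ --- which again uses $\reg(I)=d$ rather than polytope geometry.
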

\begin{proof}
  Let $I=\sum\limits_{i=1}^{s}J_i$, where $J_i=(v_{i1},\ldots, v_{ip_i})x_k^{d_{ik}}$ with $d_{1k}<\cdots<d_{sk}$, and  each  $v_{ij}=x_{\alpha}^{e_{ij}}x_{\beta}^{f_{ij}}\in \KK[x_{\alpha},x_{\beta}]$, where  $\alpha, \beta\in [3]\setminus \{k\}$ are distinct and satisfy $0\le e_{i1}<e_{i2}<\cdots<e_{ip_i}$ and $f_{i1}>f_{i2}>\cdots>f_{ip_i}\ge 0$ for any $i\in [s]$.
 Also, let $\overline{I}=\sum\limits_{i=1}^{s'}J'_i$, where $J'_i=(w_{i1},\ldots, w_{iq_i})x_k^{d'_{ik}}$ with $d'_{1k}<\cdots<d'_{s'k}$, and  each  $w_{ij}=x_{\alpha}^{e'_{ij}}x_{\beta}^{f'_{ij}}\in \KK[x_{\alpha},x_{\beta}]$, where  $\alpha, \beta\in [3]\setminus \{k\}$ are distinct and satisfy $0\le e'_{i1}<e'_{i2}<\cdots<e'_{iq_i}$ and $f'_{i1}>f'_{i2}>\cdots>f'_{iq_i}\ge 0$ for any $i\in [s']$.

We first show that $s=s'$. Let $\mathcal{G}(I)=\{\xb^{{\bb}_1},\ldots, \xb^{{\bb}_m}\}$, then by Lemma \ref{e0}, $\overline{I}$ is generated by the
monomials  $\xb^{\ab}$, where $\ab=(\lceil a \rceil,\lceil b \rceil,\lceil c \rceil)$ with $(a,b,c)\in \text{conv}(\mathcal{G}(I))$. By the definition of $\text{conv}(\mathcal{G}(I))$,
we know that  $(a,b,c)=\sum\limits_{i=1}\limits^{m}{\lambda_i{\bb}_i}$, where $\sum\limits_{i=1}\limits^{m}{\lambda_i}=1$ and each  $\lambda_i \in \mathbb{Q}_{+}$.
For every   $x_k$, by comparing its degree   in $\xb^{\ab}$, we get that
 \begin{align*}
 \d_{k}(\xb^{\ab})&=\lceil\sum\limits_{i=1}\limits^{m}({\lambda_i\d_{k}({\bb}_i}))\rceil\ge\sum\limits_{i=1}\limits^{m}(\lambda_i d_{1k})=d_{1k}, \\ \d_{k}(\xb^{\ab})&=\lceil\sum\limits_{i=1}\limits^{m}({\lambda_i\d_{k}({\bb}_i}))\rceil\le\lceil\sum\limits_{i=1}\limits^{m}(\lambda_i d_{sk})\rceil=d_{sk}.
  \end{align*}
From the expression of $I=\sum\limits_{i=1}^{s}J_i$, we deduce that $s=s'$.

 Then it suffices to prove that $\overline{I}$ is an equigenerated monomial ideal of degree $d$. In this condition, we claim: $\overline{I}$ satisfies the conditions $(**)$. By the claim and Lemma \ref{e6}, we have $\reg(\overline{I})=d$.

 proof of the claim: By the expressions of $I$ and $\overline{I}$ and the fact that $\mathcal{G}(I)\subseteq \mathcal{G}(\overline{I})$ and $s=s'$, we have $\mathcal{G}(J_i)\subseteq \mathcal{G}(J'_{i})$ for every  $i\in [s]$.
 Thus the conditions $(2^{**})$ and $(3^{**})$ hold in Definition \ref{condition2}. For every $i\in [s]$, since  $\overline{I}$ is  integrally closed,  we have $e'_{i(j+1)}-e'_{ij}=1$ and $f'_{ij}-f'_{i(j+1)}=1$
 for each $j\in [p_i]$  by Lemma \ref{e1}. So the condition $(4^{**})$ in Definition \ref{condition2} not occur.

Under the condition that $\overline{I}$ is an equigenerated monomial ideal, we will show the condition $(1^{**})$ holds by induction on $s$. The case $s=2$ follows from  $J'_1=\overline{J_1}=J_1$, which implies that $\reg(J'_1)=d$. In the following, we assume that $s\ge 3$ and that $\reg(\sum\limits_{i=1}^{\ell}J'_i)=d$ for  every  $\ell\in [s-2]$.
From the proof above, $\sum\limits_{i=1}^{\ell}J'_i$ satisfies the conditions  $(2^{**})$ and $(3^{**})$ in Definition \ref{condition2}. Furthermore,  the condition $(4^{**})$ in Definition \ref{condition2} not occur. On the other hand, according to  Lemma \ref{e5}, since $\reg(I)=d$,   we get $d_{(i+1)k}-d_{ik}=1$ for any $i\in [s-1]$.
We can deduce that $\reg(\sum\limits_{i=1}^{s-1}J'_i)=d$ by Lemma \ref{e6} and  induction.

 Finally, we will prove that $\overline{I}$ is an equigenerated monomial ideal. For any monomial  $f\in \mathcal{G}(\overline{I})\setminus \mathcal{G}(I)$,  by Lemma \ref{e0}, we can get $f=x_1^{\lceil a \rceil}x_2^{\lceil b \rceil}x_3^{\lceil c \rceil}$ with $(a,b,c)\in \text{conv}(\mathcal{G}(I))$. If $c\in \mathbb{Q}\setminus \mathbb{N}$, then $(a,b,c)$ can be seen as a point on a line segment with $\ab=(m_1,n_1,\lceil c \rceil)$ and $\bb=(m_2,n_2,\lfloor c \rfloor)\in \text{conv}(\mathcal{G}(I))$ as endpoints. By symmetry, assume that $m_1<m_2$. In fact, if there does note exist $m_1$, $m_2$ such that $m_1\neq m_2$, in this case we have $m_1=m_2\in \mathbb{N}$, then we have $x_1^{m_1}x_2^{n_1}x_3^{\lceil c \rceil}\mid f$. Since $(a,b,c)=\sum\limits_{i=1}\limits^{m}{\lambda_i{\bb}_i}$ with $\sum\limits_{i=1}\limits^{m}{\lambda_i}=1$, we have $a+b+c=d$, which implies  that $\lceil a \rceil+ \lceil b \rceil+\lceil c \rceil>d$.
This contradicts  the hypothesis that  $f\in \mathcal{G}(\overline{I})$.  Thus  $(a,b,c)=\lambda\ab+(1-\lambda)\bb$, where $\lambda\in \mathbb{Q}_{+}$ such that $0<\lambda<1$.
By comparing  the components  of the expression  $(a,b,c)=\lambda\ab+(1-\lambda)\bb$, we get  $m_1<a<m_2$.
Since $\text{conv}(\mathcal{G}(I))$ is the convex hull of $\mathcal{G}(I)$, we have  $(\lceil m_1 \rceil, \lfloor n_1 \rfloor, \lceil c \rceil)\in\text{conv}(\mathcal{G}(I))$ or $(\lfloor m_1 \rfloor,\lceil n_1 \rceil,\lceil c \rceil)\in\text{conv}(\mathcal{G}(I))$.
Under the condition that  $(\lceil m_1 \rceil, \lfloor n_1 \rfloor, \lceil c \rceil)\in\text{conv}(\mathcal{G}(I))$, there are two subcases
(I) If $n_1\le b$, then  $x_1^{\lceil m_1 \rceil}x_2^{\lfloor n_1 \rfloor}x_3^{\lceil c \rceil}\mid f$, a contradiction.
 (II) If $n_1>b$, then $n_1>b>n_2$. Consider the sequences of points $\ab_i=(a_i,b_i,\lceil c \rceil)$, $\bb_j=(a'_j,b'_j,\lfloor c \rfloor)\in\text{conv}(\mathcal{G}(I))$ where $a_i=\lceil m_1 \rceil+i-1$, $b_i=\lfloor n_1 \rfloor-i+1$, $a'_j=\lfloor m_2 \rfloor-i+1$ and $b'_j=\lceil n_2 \rceil+i-1$. By the condition $(2^{**})$ in Definition \ref{condition2}, we can deduce that $a_i=a'_j$  or $b_{i}=b_{j'}$ for some  $i,j$.
It follows that $x_1^{a_{i'}}x_2^{b_{i'}}x_3^{\lceil c \rceil}\mid f$ for some $i'$  or $x_1^{a'_{j'}}x_2^{b'_{j'}}x_3^{\lfloor c \rfloor}\mid f$ for some $j'$. This implies that  $f\notin \mathcal{G}(\overline{I})$, a contradiction.
By similar arguments as the condition that  $(\lceil m_1 \rceil, \lfloor n_1 \rfloor, \lceil c \rceil)\in\C(I)$,  if $(\lfloor m_1 \rfloor,\lceil n_1 \rceil,\lceil c \rceil)\in\C(I)$,
we also  get the contradiction. Therefore, $c\in  \mathbb{N}$. Likewise, $a,b\in  \mathbb{N}$. So $\overline{I}$ is generated by monomials  of degree $d$.
We complete the proof.
\end{proof}

Before concluding this paper, we provide the proof of Theorem \ref{e9}.

\medskip

\textit{ The proof of \textit{  of Theorem \ref{e9}}}\,:
 If $I=(u_{1},\ldots, u_{\ell})x_3^{c_1}$, then $\reg(\overline{I})\leq \reg(I)$ holds by Theorem \ref{2-variable}.
Assume that $I=\sum\limits_{i=1}^{t}I_i$, where $t\ge 2$ and  each $I_i$ has the form described in Setting \ref{setting1}.
If $\reg(I)=d$, then $\reg(\overline{I})=\reg(I)$ by Theorem \ref{e8}.
If $\reg(I)\ge d+1$, then  the primary decomposition of  $I$ allows us to  deduce that $\dim(S/I)=0$ or that $I=X^{\mathbf{a}}J$, where $J$ is  a  monomial ideal such that
 $\dim(S/J)\le 1$, or $J=S$. If $I=X^{\mathbf{a}}J$, then $\reg(I)=\reg(J)+\deg(X^{\mathbf{a}})$ and
 $\overline{I}=X^{\mathbf{a}}\overline{J}$, meaning that $\reg(\overline{I})=\reg(\overline{J})+\deg(X^{\mathbf{a}})$.
So it suffices to consider the case when $\dim(S/I)\leq 1$. By Lemma \ref{compare}, $\reg(\overline{I})\leq\delta(I)+1=d+1\leq\reg(I)$, as desired.

\medskip

\medskip
\hspace{-6mm} {\bf Acknowledgments}

 \vspace{3mm}

\hspace{-6mm} The authors would like to sincerely express their gratitude to Professor Tang Zhongming for his participation in our discussions and his valuable suggestions,  as well as to Professor Le Tuan Hoa for his insightful contributions.
This research is supported by the Natural Science Foundation of Jiangsu Province
(No. BK20221353) and the National Natural Science Foundation of China (No. 12471246).

	\end{document}